\newtheorem{thm}{Theorem}[section]
\newtheorem*{definition}{Definition}
\newtheorem{lemma}[thm]{Lemma}
\newtheorem{prop}[thm]{Proposition}
\newtheorem{coro}[thm]{Corollary}
\newtheorem{eg}{Example}[section]
\renewcommand\section{\@startsection {section}{1}{\z@}{- \baselineskip}{\baselineskip}{\bfseries\scshape\Large}}
\renewcommand\subsection{\@startsection {subsection}{2}{\z@}{-\baselineskip}{\baselineskip}{\large\bfseries}}
\newcommand{\ags}{A(\Gamma^\sigma)}
\begin{document}

\title{Representations of $Aut(A(\Gamma))$ acting\\
on homogeneous components of $A(\Gamma)$ and $A(\Gamma)^!$}

\author{Colleen Duffy}
\address{Rutgers, the State University of New Jersey\\Department of Mathematics - Hill Center\\110 Frelinghuysen Rd.\\Piscataway, NJ 08854}
\email{duffyc@math.rutgers.edu}
\thanks{I would like to thank my advisor, Prof. Robert Wilson.}



\date{April 2008}


\begin{abstract}In this paper we will study the structure of
algebras $A(\Gamma)$ associated to two directed, layered graphs
$\Gamma$.  These are algebras associated with Hasse graphs of n-gons
and the algebras $Q_n$ related to pseudoroots of noncommutative
polynomials.  We will find the filtration preserving
automorphism group of these algebras and then we will find the
multiplicities of the irreducible representations of
$Aut(A(\Gamma))$ acting on the homogeneous components of $A(\Gamma)$
and $A(\Gamma)^!$.
\end{abstract}

\maketitle

\section*{Introduction}

In this paper we will be considering directed graphs $\Gamma$
satisfying certain hypotheses.  There exists an algebra $A(\Gamma)$
over a field $k$, an associated graded algebra gr$A(\Gamma)$, and a
dual algebra $A(\Gamma)^!$ of gr$A(\Gamma)$ associated with each of
these graphs. In Section \ref{S:prelim} we will give some
preliminaries on how these algebras are built from the graphs as
well as a basis for $A(\Gamma)$.  The definition of the dual algebra
and of subalgebras that will play an integral role in what follows
will be given in Section \ref{S:subalg}.  In Section \ref{S:def} we
will introduce the two algebras $A(\Gamma_{D_n})$ and $Q_n$ that we
will be describing throughout this paper.

The automorphism group of the graph injects into the automorphism
group Aut$A(\Gamma)$ of $A(\Gamma)$.  Furthermore, the nonzero
scalars inject into the automorphism group of the algebra.  Thus,
one is naturally led to ask how these automorphism groups are
related. This question will be answered in Section \ref{S:aut}.

A second question that we are led to is to describe the homogeneous
components of $A(\Gamma)$ and $A(\Gamma)^!$.  We will decompose the
graded algebra and its dual into irreducible
Aut$(A(\Gamma))$-modules by calculating the graded trace generating
functions.  These graded trace generating functions are actually the
graded dimensions of certain subalgebras of gr$A(\Gamma)$.  Hence,
the technique for calculating the graded trace generating functions
is to abstract the problem into finding the graded dimension of
subalgebras.  In Section \ref{S:grtr} we will explain why this is
true and how these graded dimensions are found.

The graded traces of our two particular algebras will be given in
Section \ref{S:genfn} and their decompositions in Section
\ref{S:reps}.  The decomposition can be found using the graded trace
and the characters of the automorphism group of the algebra. The
dual algebras of these two examples will be considered in Section
\ref{S:dual}.

\section{Preliminaries}\label{S:prelim}

\subsection{The Algebra $A(\Gamma)$}

Certain algebras, denoted $A(\Gamma)$, associated to directed graphs
were first defined by Gelfand, Retakh, Serconek, and Wilson
~\cite{GRSW}. We recall the definitions of $A(\Gamma)$ and
gr$A(\Gamma)$ following the development found in ~[\cite{RSW}, \S2].
Let k be a field and for any set $W$ let $T(W)$ be the free
associative algebra on $W$ over k. Let $\Gamma = (V,E)$ be a
directed graph where $V$ is the set of vertices, $E$ the set of
edges, and there are functions $t,h:E\rightarrow V$ (tail and head
of $e$). We say $\Gamma$ is a layered graph if $V=\displaystyle
\bigcup_{i=0}^nV_i,\, E= \displaystyle \bigcup_{i=0}^nE_i,\,
t:E_i\rightarrow V_i, \text{ and }h:E_i\rightarrow V_{i-1}$.  If
$v\in V_i$ ($e\in E_i$), we say the level of $v$, (respectively $e$)
is $i$; denote this by $|v|$, (resp. $|e|$).  We will assume
throughout this paper that $V_0=\{*\}$ and that for all $v\in V_+ =
\displaystyle \bigcup_{i=1}^n V_i$, there exists at least one $e\in
E$ such that $t(e)=v$.  For each $v\in V_+$, fix some $e_v\in E$
with $t(e_v)=v$; call this a distinguished edge.

A path from $v\in V \text{ to } w\in V$ is a sequence of edges $\pi
= \{e_1,...,e_m\}$ such that $t(e_1)=v,\, h(e_m)=w, \text{ and
}t(e_{i+1})=h(e_i), 1\leq i<m$.  We will say $t(\pi)=v, \,
h(\pi)=w$, and the length of $\pi, l(\pi), \text{ is }m$. Write
$v>w$ if there exists a path from $v$ to $w$.  For
$\pi=\{e_1,...,e_m\}$, define $e(\pi,k):=\displaystyle \sum_{1\leq
i_1<\cdots<i_k\leq m} e_{i_1}\cdots e_{i_k}$.  For each $v\in V$
there is a path $\pi_v=\{e_1,...,e_m\}$, called the distinguished
path, from $v$ to * defined by $e_1=e_v$, $e_{i+1}=e_{h(e_i)}$ for
$1\leq i<m$, and $h(e_m)=*$.  When $\pi_v$ is the distinguished path
from $v$ to
*, we will write $e(v,k)$ in lieu of $e(\pi_v,k)$.  Let $R$ be the
two-sided ideal of $T(E)$ generated by $\{e(\pi_1,k)-e(\pi_2,k):
t(\pi_1)=t(\pi_2),\, h(\pi_1)=h(\pi_2),\, 1\leq k\leq l(\pi_1)\}$.

\begin{definition}$A(\Gamma)=T(E)/R$\end{definition}

Let $\hat{e}(v,k)$ denote the image in $A(\Gamma)$ of $e_1\cdots e_k$.
Finally say that $(v,k)$ covers $(w,l)$ if $v>w$ and $k=|v|-|w|$, write this as $(v,k)\gtrdot (w,l)$.

\begin{thm}\label{T:rswbasis} ~[\cite{RSW},Thm 1] - Let $\Gamma=(V,E)$ be a layered graph,
$V=\displaystyle \bigcup_{i=0}^nV_i,\, V_0=\{*\}$.  Then\\
$\mathcal{B}(\Gamma) := \{\hat{e}(v_1,k_1) \cdots \hat{e}(v_l,k_l):
l\geq 0, v_1,...,v_l\in V_+, 1\leq k_i\leq |v_i|,
(v_i,k_i)\not\gtrdot (v_{i+1},k_{i+1})\}$ is a basis for
$A(\Gamma)$.\end{thm}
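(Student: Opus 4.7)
The plan is to treat spanning and linear independence separately, as they require different techniques.

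For spanning, the relations $e(\pi_1,k)-e(\pi_2,k)$ are homogeneous of degree $k$ in the edge variables, so $A(\Gamma)$ is graded by edge-degree and it suffices to rewrite in each fixed degree. I would first use the $k=1$ relations $\sum_{e\in\pi}e=\sum_{e\in\pi_v}e$ to replace each non-distinguished edge by a combination of distinguished edges. Once a word in $T(E)$ is expressed purely in distinguished edges, I would factor it as a product of distinguished-path segments $\hat{e}(v_i,k_i)$. To eliminate a bad adjacent factor $\hat{e}(v_i,k_i)\hat{e}(v_{i+1},k_{i+1})$ with $(v_i,k_i)\gtrdot(v_{i+1},k_{i+1})$, I would apply the relation $e(\pi_1,k_i+k_{i+1})=e(\pi_2,k_i+k_{i+1})$ along a path passing through $v_{i+1}$ to solve for the bad factor modulo terms strictly smaller in a chosen monomial ordering---say lexicographic on the sequence $(|v_1|,k_1,|v_2|,k_2,\ldots)$. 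Noetherian induction on this ordering then produces a reduction to a linear combination of elements of $\mathcal{B}(\Gamma)$.

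For linear independence, I would construct an explicit representation $\rho:A(\Gamma)\to\mathrm{End}(M)$, where $M=\bigoplus_{b\in\mathcal{B}(\Gamma)}k\,v_b$ is the free vector space on the proposed basis. Define a left action of each edge $e\in E$ on $M$ combinatorially, in such a way that $b\cdot v_{\emptyset}=v_b$ for every $b\in\mathcal{B}(\Gamma)$, where $v_\emptyset$ is the vector indexed by the empty monomial. Once the defining relations are checked to hold in this representation, any linear dependence among elements of $\mathcal{B}(\Gamma)$ in $A(\Gamma)$ would yield a dependence among distinct basis vectors of $M$, which is impossible. An alternative route is Bergman's Diamond Lemma applied to the spanning reductions, where one must verify that every overlap ambiguity between relations resolves.

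The hard part is linear independence. The relations are multi-term rather than monomial, so both routes---the explicit module or the diamond-confluence check---require delicate combinatorial bookkeeping. Overlap ambiguities occur between relations at different levels of $\Gamma$, and the action on $M$ must simultaneously encode the distinguished-path data and the covering relation $\gtrdot$. Spanning, by contrast, is largely mechanical once the rewriting procedure and monomial ordering are correctly organized.
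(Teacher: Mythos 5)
This theorem is not proved in the paper at all: it is imported verbatim as [RSW, Thm 1], so there is no in-paper argument to compare against. Judged on its own terms, your proposal identifies the right two-part structure (spanning by rewriting, independence by a module construction or the Diamond Lemma), which is indeed how the literature proceeds, but as written it is a plan rather than a proof, and the gap sits exactly where you admit the difficulty is.

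Concretely, two things are missing. First, in the spanning step, ``solve for the bad factor modulo smaller terms'' hides real content: the defining relations equate $e(\pi_1,k)$ and $e(\pi_2,k)$, which are sums over all $k$-element subsets of the edges of a path taken in order, not the single monomials $\hat{e}(v,k)=e_1\cdots e_k$. Extracting from them a rewriting rule for a bad pair $\hat{e}(v,k)\hat{e}(u,l)$ with $(v,k)\gtrdot(u,l)$ requires the identity
\[
e(v,k)e(u,l)-e(v,k+l)\equiv \sum (\pm)\, e(v,i_0)e(u,i_1)\cdots e(u,i_{r+1}) \pmod{R},
\]
quoted in this paper from [GRSW] in the proof of the presentation of $A(\Gamma^\sigma)$, together with the fact ([GRSW], Lemma 2.2) that the right-hand side is strictly lower in the \emph{level} filtration; your proposed lexicographic order on $(|v_1|,k_1,|v_2|,k_2,\ldots)$ is not obviously decreasing under this substitution, whereas the filtration degree is, so the Noetherian induction needs that specific identity and that specific ordering. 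Second, and more seriously, the linear independence argument is deferred in its entirety: ``define an action of each edge on $M$ so that $b\cdot v_\emptyset=v_b$ and check the relations'' or ``verify that every overlap ambiguity resolves'' is a restatement of what must be proved, not a proof. Nothing in the proposal specifies how an edge acts on $v_b$ when it extends or merges with the leading factor of $b$, nor which overlaps occur between relations at adjacent levels and why they resolve; for a sense of how much work even one such confluence check costs, see the computation in Proposition \ref{P:diamond}, which handles a single rank-3 graph. So the proposal is a sound outline consistent with the [RSW]/[GRSW] approach, but the theorem's actual content is left unestablished.
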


There is also a presentation of $A(\Gamma)$ as a quotient of
$T(V_+)$ ~[\cite{RSW2},\S3].  Every edge may be expressed as a
linear combination of distinguished edges, and the distinguished
edge $e_v$ may be identified with $v\in V_+$. Define $S_1(v):=\{w\in
V_{|v|-1} :v>w\}$.  A layered graph is uniform if for every $v\in
V_j, \, j\geq 2$, every pair of vertices $u,w$ in $S_1(v)$ satisfies
$S_1(u)\cap S_1(w) \neq \emptyset$ (``diamond condition'').

\begin{prop}\label{P:rsw2def} ~[\cite{RSW2},Prop 3.5] Let $\Gamma$ be a uniform layered graph.
 Then $A(\Gamma)\cong T(V_+)/R_V$ where $R_V$ is the two-sided ideal generated by

\noindent $\{v(u-w)-u^2+w^2 + (u-w)x : v\in \displaystyle \bigcup_{i=2}^n
V_i, u, w \in S_1(v), x\in S_1(u)\cap S_1(w)\}$.\end{prop}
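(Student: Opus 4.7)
The plan is to construct an explicit homomorphism $\psi\colon T(V_+) \to A(\Gamma)$, verify that $R_V$ lies in its kernel, and then argue by a dimension count against the basis $\mathcal{B}(\Gamma)$ of Theorem \ref{T:rswbasis} that the induced map $\bar\psi\colon T(V_+)/R_V \to A(\Gamma)$ is an isomorphism.

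First I would define $\psi$ by sending each generator $v \in V_+$ to the element $e(v,1) \in A(\Gamma)$, i.e.\ the sum of edges along any path from $v$ to $*$. This is well-defined because the length-one defining relations force $e(\pi_1,1) = e(\pi_2,1)$ whenever $t(\pi_1)=t(\pi_2)$ and $h(\pi_1)=h(\pi_2)$. Extending by $\psi(*) = 0$, the key identity $e = \psi(t(e)) - \psi(h(e))$ holds in $A(\Gamma)$ for every edge $e$, obtained by splitting a path from $t(e)$ to $*$ at the intermediate vertex $h(e)$. Since the edges generate $A(\Gamma)$ as a $k$-algebra, $\psi$ is surjective.

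Next I would check that each generator of $R_V$ is sent to zero. Fix $v \in V_i$ with $i\geq 2$, $u, w \in S_1(v)$, and $x \in S_1(u)\cap S_1(w)$, which exists by the uniform hypothesis. The two length-two paths $\pi_1 = \{e_{v,u}, e_{u,x}\}$ and $\pi_2 = \{e_{v,w}, e_{w,x}\}$ from $v$ to $x$ yield the defining relation $e_{v,u}e_{u,x} = e_{v,w}e_{w,x}$ in $A(\Gamma)$. Writing each edge as $e_{a,b} = \psi(a) - \psi(b)$ and expanding both products, the common $\psi(v)\psi(x)$ terms cancel, leaving
\[
\psi(v)(\psi(u)-\psi(w)) - \psi(u)^2 + \psi(w)^2 + (\psi(u)-\psi(w))\psi(x) = 0,
\]
which is exactly the image under $\psi$ of the generator $v(u-w) - u^2 + w^2 + (u-w)x$ of $R_V$. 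Hence $\psi$ descends to $\bar\psi\colon T(V_+)/R_V \to A(\Gamma)$.

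Finally, for injectivity of $\bar\psi$, I would lift the basis $\mathcal{B}(\Gamma)$ back to $T(V_+)$: each factor $\hat e(v,k)$ expands as $\prod_{j=1}^{k}(v_{j-1}-v_j)$ along the distinguished path $v = v_0 > v_1 > \cdots > v_k$ (with the convention that $*$ is replaced by $0$), and a basis monomial becomes a corresponding product of such factors. Then I would argue, by a Bergman-diamond-style rewriting using only the relations in $R_V$, that any monomial in $T(V_+)$ reduces modulo $R_V$ to a $k$-linear combination of these lifts; this forces $\dim(T(V_+)/R_V)_n \leq |\mathcal{B}(\Gamma)_n|$ in every degree, and combined with surjectivity yields the isomorphism. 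The hard part is precisely this rewriting: one must show that the length-two relations in $R_V$ alone suffice to normalize arbitrary products, which amounts to deriving all higher-length path relations in the original presentation of $A(\Gamma)$ from the length-two ones. The uniform/diamond condition is essential here, since it provides the confluence needed for rewriting to terminate in a well-defined normal form.
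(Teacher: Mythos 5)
A preliminary remark: the paper itself gives no proof of this proposition --- it is quoted from [RSW2, Prop.\ 3.5] --- so there is no in-paper argument to compare against; I can only assess your proposal on its own terms.

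Your first two steps are correct and well executed. One notational caution: in this paper $e(v,1)=\hat e(v,1)$ denotes the single distinguished edge $e_v$, whereas the element you actually need is $e(\pi_v,1)=\sum_{e\in\pi_v}e$, the sum of the edges along a path from $v$ to $*$; your parenthetical shows you mean the latter, and that is indeed the right choice, since it is the map for which every edge becomes $\psi(t(e))-\psi(h(e))$ and the expansion of $e(\pi_1,2)-e(\pi_2,2)$ produces exactly the generator $v(u-w)-u^2+w^2+(u-w)x$. So surjectivity of $\psi$ and $R_V\subseteq\ker\psi$ are genuinely established.

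The gap is the injectivity step, which is the entire mathematical content of the proposition and which you have only gestured at. You propose a Bergman-diamond rewriting in $T(V_+)/R_V$ whose normal forms biject with $\mathcal{B}(\Gamma)$, and you correctly flag that this ``amounts to deriving all higher-length path relations from the length-two ones.'' But that derivation is precisely what must be proved, and it is not a routine confluence check: the paper's own diamond-lemma computation for the single graph $\Gamma_{D_n}$ (Proposition \ref{P:diamond}) shows that resolving the overlap ambiguities forces a \emph{new} degree-three reduction not among the defining ones, and for an arbitrary uniform layered graph one must resolve all such ambiguities abstractly --- equivalently, show that for any two paths $\pi_1,\pi_2$ of length $m$ with common endpoints, the images of $e(\pi_1,k)-e(\pi_2,k)$ for $2\le k\le m$ lie in $R_V$, which requires connecting arbitrary paths by elementary diamond moves and an induction on $m$ that you have not set up. Without this, the inequality $\dim(T(V_+)/R_V)_n\le|\mathcal{B}(\Gamma)_n|$ is unsupported, and the proof is incomplete. (An alternative that localizes the same difficulty: build the inverse map $T(E)\to T(V_+)/R_V$, $e\mapsto t(e)-h(e)$, and verify that every generator of $R$ --- not just the quadratic ones coming from length-two paths --- maps to zero; that verification is again the crux and cannot be waved through.)
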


\noindent Remark: From now on we will just write $e(v,k) \text{ for
} \hat{e}(v,k)$.

\subsection{Associated Graded Algebra $gr A(\Gamma)$}

Next we will describe a filtration and grading on $A(\Gamma)$. Here
we will also denote by $V$ the span of $V$ in $T(V)$, and by $E$ the
span of $E$ in $T(E)$, when no confusion will arise.  Let
$W=\displaystyle \sum_{k\geq 0}W_k$ be a graded vector space (in our
case $W=V \text{ or }E$).  Then $T(W)$ is bigraded.  One grading
$T(W) = \sum_iT(W)_{[i]}$ is given by degree in the tensor algebra;
i.e., $T(W)_{[i]} = span\{w_1\cdots w_i : w_1,...,w_i\in W\}$.  The
other grading is given by $T(W)=\displaystyle \sum_{i\geq0}T(W)_i$
where

\noindent $T(W)_i=\text{span}\{w_1\cdots w_r:r\geq 0, w_j\in W_{l_j},
l_1+...+l_r=i\}$.

\noindent The second grading induces an increasing
filtration on $T(W)$:

\noindent $T(W)_{(i)}=\text{span}\{w_1\cdots w_r:r\geq 0, w_j\in
W_{l_j}, l_1+\cdots+l_r\leq i\} = T(W)_0+\cdots+T(W)_i$.

Because $T(W)_{(i)}/T(W)_{(i-1)} \cong T(W)_i$, $T(W)$ can be
identified with its associated graded algebra.  Define a map
$gr:T(W)\backslash\{0\}\rightarrow T(W)\backslash\{0\}=grT(W)$ by
$w=\displaystyle \sum_{i=0}^k w_i \mapsto w_k$ where $w_i\in
T(W)_i,\, w_k\neq 0$.  Of course, gr is not an additive map.  ~[\cite{RSW2},\S2]

\begin{lemma}~[\cite{RSW2},Lemma 2.1] Let W be a graded vector space and I an ideal in $T(W)$.
Then \\$gr(T(W)/(I)) \cong T(W)/(grI)$.\end{lemma}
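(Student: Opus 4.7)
The plan is to construct a natural surjection $\phi\colon T(W) \to gr(T(W)/I)$ and identify its kernel with the two-sided homogeneous ideal $(gr\, I)$ generated by the leading terms of elements of $I$. The key initial observation is that, since $T(W)$ is itself a graded algebra, the induced filtration $T(W)_{(i)} = \bigoplus_{j\leq i} T(W)_j$ satisfies $T(W)_{(i)}/T(W)_{(i-1)} \cong T(W)_i$ canonically, so $gr\, T(W) \cong T(W)$ as graded algebras. The filtration then descends to $T(W)/I$ via $(T(W)/I)_{(i)} := (T(W)_{(i)} + I)/I$, and the $i$-th graded piece of $gr(T(W)/I)$ is
\[ (T(W)_{(i)} + I)\big/(T(W)_{(i-1)} + I). \]

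First I would define $\phi$ piece-by-piece: a homogeneous $x \in T(W)_i$ is sent to its class modulo $T(W)_{(i-1)} + I$, and this is extended linearly to $T(W) = \bigoplus_i T(W)_i$. Working with the genuine grading on $T(W)$ rather than with the non-additive map $gr$ is what sidesteps the usual subtleties. That $\phi$ is an algebra homomorphism follows from multiplicativity of the filtration, $T(W)_{(i)} \cdot T(W)_{(j)} \subseteq T(W)_{(i+j)}$, which passes to the quotient. Surjectivity is immediate: any class is represented by some $w \in T(W)_{(i)}$, and writing $w = w_0 + \cdots + w_i$ with $w_j \in T(W)_j$, the lower-degree pieces already lie in $T(W)_{(i-1)}$, so the class equals $\phi(w_i)$.

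The heart of the argument is identifying $\ker\phi$ with $(gr\, I)$. Since $\phi$ respects the grading on each side, $\ker\phi$ is automatically a homogeneous two-sided ideal, so it suffices to check equality on homogeneous pieces. For one inclusion, if $z \in I$ has top-degree component $x = gr(z) \in T(W)_i$, then $x = z - (z - x)$ with $z \in I$ and $z - x \in T(W)_{(i-1)}$, so $\phi(x) = 0$; since $\ker\phi$ is an ideal, it contains the ideal generated by all such $gr(z)$. Conversely, if $x \in T(W)_i \cap \ker\phi$, then $x = y + z$ with $y \in T(W)_{(i-1)}$ and $z \in I$; comparing top-degree components forces $gr(z) = x$, so $x$ is itself a generator of $(gr\, I)$. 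The first isomorphism theorem then yields $T(W)/(gr\, I) \cong gr(T(W)/I)$.

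The main obstacle is the bookkeeping around the non-additive nature of $gr$: the raw set $\{gr(z) : z \in I\setminus\{0\}\}$ is not generally a subspace, which is precisely why $(gr\, I)$ must be defined as the two-sided ideal generated by this set rather than as a naive image. Once that distinction is in place and one remembers to define $\phi$ using the honest grading of $T(W)$ (instead of via $gr$), the rest is a clean diagram chase at each filtration level, with the multiplicative structure of the filtration providing the compatibility with the algebra product.
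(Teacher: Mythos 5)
Your argument is correct and is the standard proof of this fact; the paper itself only cites the lemma from [RSW2, Lemma 2.1] without reproducing a proof, so there is nothing in the text to compare against. Your construction of the graded surjection $\phi$ and the identification of its homogeneous kernel with the ideal of leading terms is exactly the expected argument, and the kernel computation (both inclusions) is carried out correctly.
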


Thus the associated graded algebra of $A(\Gamma), \, grA(\Gamma)$,
is isomorphic to $T(E)/grR$.  The graded relations, $grR$, are for
$\pi_1 = \{e_1,...,e_m\} \text{ and }\pi_2=\{f_1,...,f_m\}$,
$\{e_1\cdots e_k=f_1\cdots f_k, \, 1\leq k\leq m\}$ (the leading
term of $e(v,k)$).  Another way to consider this is that $e(v,k+l) =
e(v,k)e(u,l)$ in $grR$ where $v\gtrdot u$.  Recalling the definition
of $\mathcal{B}(\Gamma)$ from Theorem \ref{T:rswbasis}, we see that
$\{gr (b) :b\in \mathcal{B}(\Gamma)\}$ is a basis for $grA(\Gamma)$.

Let us now look at our second description of $A(\Gamma)$ as
isomorphic to $T(V_+)/R_V$.

\begin{prop}\label{P:rsw2grdef}~[\cite{RSW2},Prop 3.6] Let $\Gamma$ be a
uniform layered graph.  \\Then $grA(\Gamma) \cong T(V_+)/grR_V$ where
$gr R_V$ is generated by $\{v(u-w): v\in \displaystyle
\bigcup_{i=2}^n V_i, u, w \in S_1(v)\}$.\end{prop}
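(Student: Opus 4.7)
My plan is to derive this from Proposition \ref{P:rsw2def} by applying Lemma 2.1 with $W = V_+$ (graded by the level function) and $I = R_V$. This immediately gives $\text{gr}A(\Gamma) \cong \text{gr}(T(V_+)/R_V) \cong T(V_+)/\text{gr}R_V$, so the entire content of the proposition reduces to identifying $\text{gr}R_V$ with the ideal $J := \langle v(u-w) : v \in V_i,\ i \geq 2,\ u, w \in S_1(v) \rangle$.

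For the inclusion $J \subseteq \text{gr}R_V$, I take a generator $r = v(u-w) - u^2 + w^2 + (u-w)x$ of $R_V$ with $v \in V_i$, $u,w \in S_1(v) \subseteq V_{i-1}$ and $x \in S_1(u)\cap S_1(w) \subseteq V_{i-2}$. Under the level grading on $T(V_+)$ the four summands carry degrees $2i-1$, $2i-2$, $2i-2$, $2i-3$, so $\text{gr}(r) = v(u-w)$. The uniform hypothesis on $\Gamma$ guarantees such an $x$ exists for every admissible triple $(v,u,w)$, so every generator of $J$ arises this way and lies in $\text{gr}R_V$.

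The reverse inclusion $\text{gr}R_V \subseteq J$ is the substantive step, and my preferred route is a Hilbert-series comparison. By Theorem \ref{T:rswbasis}, the set $\{\text{gr}(b) : b \in \mathcal{B}(\Gamma)\}$ is a basis for $\text{gr}A(\Gamma) \cong T(V_+)/\text{gr}R_V$; after identifying each distinguished edge $e_v$ with $v \in V_+$, this basis is expressed purely in terms of monomials in $V_+$. It then suffices to show the same set spans $T(V_+)/J$: given any monomial in $T(V_+)$, repeated application of the congruence $v u \equiv v w \pmod{J}$ whenever $u, w \in S_1(v)$ rewrites consecutive vertex pairs into a ``distinguished'' form matching the covering condition $(v_i,k_i)\not\gtrdot (v_{i+1},k_{i+1})$ of Theorem \ref{T:rswbasis}. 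The induced surjection $T(V_+)/J \twoheadrightarrow T(V_+)/\text{gr}R_V$ then carries a spanning set to a basis, forcing it to be an isomorphism and yielding $J = \text{gr}R_V$.

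The main obstacle is exactly the reduction argument in the last paragraph: one must verify that the relations $v(u-w) \in J$, with no lower-order correction terms, are already enough to push an arbitrary $V_+$-monomial to a canonical basis representative. This is essentially a graded shadow of the diamond-condition argument in \cite{RSW2} that establishes Proposition \ref{P:rsw2def}, but it is cleaner here because in $\text{gr}R_V$ there are no quadratic or lower-degree remainders to keep track of.
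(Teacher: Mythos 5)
The paper does not actually prove this proposition --- it is imported verbatim from [RSW2, Prop.\ 3.6] --- so your proposal has to stand on its own. Its architecture is sound: the graded-ideal lemma reduces everything to the identity $grR_V = J$; the inclusion $J\subseteq grR_V$ via the degree count $2i-1 > 2i-2 > 2i-3$ is correct, and you rightly note that uniformity is what guarantees an $x\in S_1(u)\cap S_1(w)$ exists for every pair $u,w\in S_1(v)$, so that every generator of $J$ really is a leading term; and the closing step (a surjection $T(V_+)/J\twoheadrightarrow T(V_+)/grR_V$ carrying a spanning set onto a basis is an isomorphism) is valid because the level-graded components are finite dimensional.

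The gap is in the spanning argument, which you flag as the main obstacle but then describe too optimistically. ``Repeated application of $vu\equiv vw$'' does not by itself push a monomial to canonical form: in a word $\cdots v\,u\,x\cdots$ with $u\in S_1(v)$ and $x\in S_1(u)$, replacing $u$ by the distinguished successor $u_v$ of $v$ generally destroys the adjacency ($x$ need not lie in $S_1(u_v)$), and the resulting word $v\,u_v\,x$ is still not a basis monomial --- it violates the non-covering condition, since $v>x$ and $|x|=|v|-2$ --- yet no generator of $J$ touches the pair $u_v\,x$. To splice the chain back together you must invoke uniformity a second time: with $u,u_v\in S_1(v)$ choose $p\in S_1(u)\cap S_1(u_v)$ and pass through $v\,u\,x\equiv v\,u\,p\equiv v\,u_v\,p\equiv v\,u_v\,u_{u_v}$. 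What is really needed, and must be proved by induction on chain length, is that any word whose block structure (defined by reachability and level differences, not by the edges literally present in the word) violates the covering condition is congruent mod $J$ to the corresponding concatenation of distinguished chains. That induction is exactly where the diamond condition earns its keep in the graded setting; without it the reverse inclusion $grR_V\subseteq J$ is not established, so as written the proof is incomplete, though the missing step is fillable along the lines just indicated.
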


Also, $A(\Gamma)_i = (T(E)_i+R)/R = (T(V_+)_i+R_V)/R_V$ and
$A(\Gamma)_{(i)} = (T(E)_{(i)}+R)/R = \\(T(V_+)_{(i)}+R_V)/R_V$.

\section{The Dual $A(\Gamma)^!$ and the Subalgebra $A(\Gamma^\sigma)$}\label{S:subalg}

\subsection{The Dual $A(\Gamma)^{!}$}

\begin{definition}[$A^!$] [\cite{BVW},$\S$ 2] Let $A=T(E)/(R), \, R\subseteq
E^{\otimes 2}$.  Then $A^! = T(E^*)/(R^\perp)$ where $E^*$ is the
dual vector space of $E$ and $R^\perp$ is the annihilator of $R$;
i.e. $R^\perp = \{f\in (E^{\otimes 2})^*:f(x)=0 \, \forall x\in R\}$
of $(E^{\otimes 2})^*$ where $(E^{\otimes 2})^*$ is canonically
identified with $E^{*\otimes 2}$.\end{definition}

\begin{definition}[$A(\Gamma)^{!}$]~\cite{D} The dual of gr$A(\Gamma)$ is $A(\Gamma)^{!} :=
T(E^*)/(gr R)^\perp$.
\end{definition}

\noindent The dual element to the generator $e(v,k)$ in $A(\Gamma)$
will be denoted $e(v,k)^*$.

\begin{prop}\cite{D}$A(\Gamma)^{!}$ has a presentation with generators
$\{e(v,1)^*\}$ and relations \\$\{e(v,1)^*e(u,1)^*: v\not\gtrdot
u\}\cup \{e(v,1)^*\displaystyle \sum_{v\gtrdot
u}e(u,1)^*\}$\end{prop}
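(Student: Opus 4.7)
The plan is to use the quadratic presentation of $grA(\Gamma)$ supplied by Proposition \ref{P:rsw2grdef}. Under the identification $v\leftrightarrow e(v,1)$ the dual generator is $e(v,1)^*$, and $grA(\Gamma)\cong T(V_+)/grR_V$ has $grR_V$ generated in pure tensor degree two by $\{v(u-w):|v|\geq 2,\ u,w\in S_1(v)\}$. Hence $A(\Gamma)^!\cong T(V_+^*)/N^\perp$, where $N\subseteq V_+^{\otimes 2}$ is the span of these quadratic generators, and proving the proposition reduces to exhibiting a basis of $N^\perp$ that matches the two stated families.

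The key structural observation I would use is that for distinct $v\in V_+$ the subspaces $v\otimes V_+\subseteq V_+^{\otimes 2}$ share no common basis vector, so $N$ is block-diagonal with respect to the first tensor factor: $N=\bigoplus_{v\in V_+}N_v$ with $N_v:=N\cap(v\otimes V_+)$. For $|v|\leq 1$ no generator of $grR_V$ has $v$ on the left, so $N_v=0$; for $|v|\geq 2$, $N_v$ is the subspace of $v\otimes\mathrm{span}(S_1(v))$ on which the $S_1(v)$-coefficients sum to zero, of codimension one inside $v\otimes\mathrm{span}(S_1(v))$.

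I would then compute $N_v^\perp$ row by row inside $v^*\otimes V_+^*$. An element $\sum_{u\in V_+}c_u\,e(v,1)^*e(u,1)^*$ annihilates $N_v$ iff the scalars $c_u$ are constant on $S_1(v)$ and are unconstrained for $u\notin S_1(v)$. A spanning set is therefore $\{e(v,1)^*e(u,1)^*:u\in V_+,\ u\notin S_1(v)\}$ together with the single row-sum vector $e(v,1)^*\sum_{u\in S_1(v)}e(u,1)^*$. Since for $u\in V_+$ the relation $v\gtrdot u$ is equivalent to $u\in S_1(v)$, these are exactly the two families in the statement restricted to this $v$; taking the direct sum over $v$ then produces the claimed presentation.

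The only delicate bookkeeping, and the place I expect the argument to need care, is the degenerate case $|v|\leq 1$: here $v\gtrdot u$ fails for every $u\in V_+$, so the first family already contains every $e(v,1)^*e(u,1)^*$ with this $v$ on the left, and the row-sum relation $e(v,1)^*\sum_{v\gtrdot u}e(u,1)^*$ collapses to an empty sum and is redundant. I would flag this explicitly so that the two families in the statement account for exactly one relation per missing constraint and no more. Beyond this, no obstacle is expected; the entire argument reduces to computing the perpendicular of an explicit block-diagonal subspace.
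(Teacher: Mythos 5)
The paper states this proposition with a citation to \cite{D} and gives no proof, so there is nothing internal to compare against; judged on its own, your argument is correct and complete. Passing to the quadratic vertex presentation $grA(\Gamma)\cong T(V_+)/grR_V$ of Proposition \ref{P:rsw2grdef}, observing that the span $N$ of the generators $v\otimes(u-w)$ decomposes as $\bigoplus_v N_v$ along the first tensor factor, and computing each $N_v^\perp$ as the annihilator of the sum-zero hyperplane in $v\otimes\mathrm{span}(S_1(v))$ is exactly the right calculation; your dimension count ($|V_+|-|S_1(v)|$ monomials plus one row sum, matching $\mathrm{codim}\,N_v$ in $v^*\otimes V_+^*$) shows the listed elements are in fact a basis of $N^\perp$, not merely a spanning set, and your treatment of level-one vertices is the correct reading of $v\gtrdot u$ as $u\in S_1(v)$.

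Two small points are worth making explicit. First, invoking Proposition \ref{P:rsw2grdef} requires $\Gamma$ to be a uniform layered graph; the proposition as stated suppresses this hypothesis (it holds for both $\Gamma_{D_n}$ and $\mathcal{L}_{[n]}$), and your proof should carry it. Second, the paper's definition of $A(\Gamma)^!$ is written as $T(E^*)/(grR)^\perp$ in the edge presentation, which is not quadratic (the degree-one relations $e_1-f_1$ live in $E$, not $E^{\otimes 2}$); your silent replacement of $T(E^*)$ by $T(V_+^*)$ via the identification of distinguished edges with vertices is the intended reading and the only sensible one, but a careful write-up should say one sentence about why the dual is computed from the vertex presentation rather than from the literal edge presentation.
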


\subsection{The Subalgebra $A(\Gamma^\sigma)$}

We will now define a subalgebra of $grA(\Gamma)$.  Let $\sigma$ be
an automorphism of the layered graph $\Gamma$; i.e. an automorphism
that preserves each level of the graph.  Define
$\Gamma^\sigma:=(V_\sigma,E_\sigma)$ where $V_\sigma$ is the set of
vertices $v\in V$ such that $\sigma(v)=v$ and $E_\sigma$ is the set
of edges that connect the vertices minimally.  Here minimally means
that there is an edge $e\in E_\sigma$ from $v$ to $w$, $v,\,w\in
V_\sigma$ if and only if $v\geq u\geq w$, $u\in V_\sigma$, implies
$u=v$ or $u=w$.

\begin{definition}[$A(\Gamma^\sigma)$]Define $A(\Gamma^\sigma)$ to be
span$\{e(v_1,k_1)\cdots e(v_l,k_l): l\geq 0, v_1,...,v_l\in
V_\sigma\backslash *, \\1\leq k_i\leq |v_i|, (v_i,k_i) \not\gtrdot
(v_{i+1},k_{i+1})\}$.\end{definition}

This set is, in fact, a basis.  The elements are linearly
independent because the set is a subset of a basis.

\begin{thm} $A(\Gamma^\sigma)$ is a subalgebra of gr$A(\Gamma)$.  \\A
presentation for $\ags$ is given by generators  $G'=\{e'(v,k): v\in
V_\sigma, 1\leq k \leq |v|\}$ and relations $R'=\{e'(v,k+l) -
e'(v,k)e'(u,l): v\gtrdot u\in V_\sigma\}$.\end{thm}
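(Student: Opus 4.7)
The plan is to prove the theorem in two stages: first verify that $A(\Gamma^\sigma)$ is closed under the multiplication of $grA(\Gamma)$, and then exhibit the presentation by $(G',R')$.

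For closure I would take two spanning elements $a = e(v_1,k_1)\cdots e(v_l,k_l)$ and $b = e(w_1,m_1)\cdots e(w_j,m_j)$ of $\ags$ and show $ab$ lies in the span. The only obstruction to $ab$ already being a spanning element is a cover $(v_l,k_l)\gtrdot(w_1,m_1)$ at the junction. Using the fundamental relation $e(v,k)e(u,l)=e(v,k+l)$ in $grA(\Gamma)$, valid whenever $v>u$ with $k=|v|-|u|$, such a junction collapses the two adjacent factors to $e(v_l, k_l+m_1)$. I would iterate this absorption as long as the growing pair $(v_l, k_l+m_1+\cdots+m_s)$ continues to cover the next $(w_{s+1}, m_{s+1})$. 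Two observations make this clean: the surviving vertex is always $v_l\in V_\sigma$, so the result stays in the spanning set; and no new cover can appear to the left of the junction, since the condition $(v_{l-1}, k_{l-1}) \not\gtrdot (v_l, k_l)$ depends only on $v_{l-1}$, $v_l$, and $k_{l-1}$, not on the second coordinate $k_l$.

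For the presentation, let $\phi: T(G')/R' \to \ags$ be the map induced by $e'(v,k) \mapsto e(v,k)$. Each generator of $R'$ becomes an instance of the identity $e(v,k+l)=e(v,k)e(u,l)$ in $grA(\Gamma)$, so $\phi$ is well-defined, and surjectivity is immediate from the spanning set. The substantive step is injectivity. I would show that the relations $R'$, which directly handle only covers in $\Gamma^\sigma$, are nonetheless strong enough to reduce $e'(v,k)e'(u,l)$ to $e'(v, k+l)$ for every pair $v>u$ in $V_\sigma$ with $k=|v|-|u|$, i.e., every cover of the original graph $\Gamma$ between vertices of $V_\sigma$. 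I expect this to be the main obstacle, because a cover $v > u$ with $v,u\in V_\sigma$ need not be an edge of $\Gamma^\sigma$. I would handle it by induction on the length of a maximal chain $v = x_0 \gtrdot x_1 \gtrdot \cdots \gtrdot x_s = u$ inside $V_\sigma$: use $R'$ backwards at the top step to write $e'(v, k) = e'(x_0, |x_0|-|x_1|)\,e'(x_1, |x_1|-|u|)$, apply the inductive hypothesis to collapse $e'(x_1, |x_1|-|u|)\,e'(u,l) = e'(x_1, |x_1|-|u|+l)$, and then use $R'$ forwards to recombine into $e'(v, k+l)$. Once this derived reduction is in hand, an obvious termination argument puts every element of $T(G')/R'$ into a linear combination of normal-form products $e'(v_1, k_1)\cdots e'(v_l, k_l)$ with $(v_i,k_i)\not\gtrdot(v_{i+1},k_{i+1})$ in $\Gamma$. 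These normal forms map under $\phi$ bijectively onto the spanning basis of $\ags$, so $\phi$ is injective and hence an isomorphism.
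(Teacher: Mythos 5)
Your proof is correct and follows essentially the same route as the paper: define $\phi$ on $T(G')$, check that the relations become the identity $e(v,k)e(u,l)=e(v,k+l)$ in $\mathrm{gr}A(\Gamma)$, reduce every monomial to a normal form with no covering pair of adjacent factors, and conclude injectivity from the linear independence of the images, which form a subset of the basis of $\mathrm{gr}A(\Gamma)$. The one point where you go beyond the paper is the chain induction deriving the collapsing relation for arbitrary comparable pairs $v>u$ in $V_\sigma$ from the relations attached to covers of $\Gamma^\sigma$; the paper implicitly reads $R'$ as indexed by all comparable pairs in $V_\sigma$ and so never confronts this (your extra step is correct and resolves a genuine ambiguity in the statement).
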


\begin{proof}Define $\phi:T(G') \rightarrow grA(\Gamma)$ by
$\phi(e'(v,k))=e(v,k)$.  We have $\phi(T(G'))\supseteq \ags$ because
elements of $B_\sigma$ are formed from products of elements in $G'$.

In $A(\Gamma)$ we have \[(*) \mbox{\hspace{.2in}} e(v,k)e(u,l)-e(v,k+l)\equiv
\displaystyle\sum_{\substack{i_0,i_{r+1} \geq 0, i_1,...,i_r
\geq1\\
i_0<k, i_0+\cdots +i_r \leq k\\ i_0+\cdots+i_{r+1}=k+l}}
(-1)^{r+1}e(v,i_0)e(u,i_1)\cdots e(u,i_{r+1})\] mod R ~[\cite{GRSW},
p6]. However, the elements on the right-hand side are all of lower
degree than those on the left-hand side ~[\cite{GRSW}, Lemma 2.2].
Note that the elements on the left-hand side have degree k+l and are
in $(T(E)/R)_{[(k+l)|v|-(k+l)(k+l+1)/2]}$. Therefore, in $gr
A(\Gamma)$, the terms on the right-hand side are zero. Hence, we
have $e(v,k)e(u,l)-e(v,k+l)\equiv 0$.  Consequently
$\phi(e'(v,k)e'(u,l)) = \phi(e'(v,k+l))$.

Let $b'=e'(v_1,k_1)\cdots e'(v_l,k_l)$ be a monomial in $T(G')$.  In
$T(G')/<R'>$, we may replace every occurrence of
$e'(v_i,k_i)e'(v_{i+1},k_{i+1})$ such that $(v_i,k_i)\gtrdot
(v_{i+1},k_{i+1})$ in $b'$ with $e'(v_i,k_i+k_{i+1})$. Thus $b'
\equiv e'(v_1',k_1')\cdots e'(v_l',k_l')$ such that
$(v_i',k_i')\not\gtrdot (v_{i+1}',k_{i+1}')$ in $T(G')/<R'>$.
Hence $\phi(b')\in \ags$, and so $\phi(T(G'))=\ags$.

By (*), $R'\subseteq \text{ker}\phi$ and we have an induced
surjective homomorphism $\phi':T(G')/<R'> \rightarrow \ags$.

Let $f=\sum k_ib_i' \in \text{ker}\phi'$, where $k_i$ is an element
in the field and $b_i'$ a monomial in \mbox{$T(G')/<R'>$.} Then
$0=\phi'(f)=\sum k_i\phi'(b_i') = \sum k_ib_i$ is a linear
combination of basis elements in $\ags$. This implies that $k_i=0
\forall i$ and so $f=0$. Therefore, $\phi'$ is an isomorphism.
\end{proof}

We will write $e(v,k)$ for $e'(v,k)$ from now on.

\bigskip

For $x$ a basis element of $A(\Gamma)_{[i]}$, write $\sigma(x)$ as a
linear combination of basis elements and say the coefficient of $x$
in $\sigma(x)$ is $\alpha$.  Denote this value $\alpha$ by
$t_\sigma(x)$. Then, for finite-dimensional $A(\Gamma)_{[i]}$,
$Tr_\sigma(A(\Gamma)_{[i]}) = \sum_{x\in \text{basis}}t_\sigma(x)$.
In this paper we will be looking at the trace of $\sigma$ acting on
$A(\Gamma)_{[i]}$ and $A(\Gamma)_{[i]}^{!}$.

\section{Definition and Hilbert Series of Two Algebras}\label{S:def}

\subsection{Hasse graph of an n-gon: $\Gamma_{D_n}$}

A Hasse graph, or Hasse diagram, is a graph which represents a
finite poset $\mathcal{P}$.  The vertices in the graph are elements
of $\mathcal{P}$ and there is an edge between $x,\,y\in \mathcal{P}$
if $x<y$ and there does not exist a $z\in\mathcal{P}$ such that
$x<z<y$.  Furthermore, the vertex for $x$, $v_x$, is in a lower level than that for $y$, $v_y$ (if we talk about layers in the
graph, $|v_x|=|v_y|-1$).

Consider a polytope.  We can put a partial order on the set of
k-faces in the polytope by $x<y$ if $x$ is an ($n-1$)-face, $y$ is
an $n$-face and $x$ is a face of $y$.

Thus, the Hasse graph of an n-gon has one vertex in levels 0 and 3
and n vertices on levels 1 and 2.  The top vertex is connected to
all vertices in level 2 (all edges are in the 2-dimensional
polygon), each vertex in level 2 is connected to the vertex directly
below it and the one to that vertex's right, with wrapping around to
the first vertex in level one for the last vertex in level two (each
edge connects two adjacent vertices), and each vertex in level 1 is
connected to the minimal vertex. Label the vertices by using
subscripts in $\mathbb{Z}/(n)$.  In level 1 call the vertices
$w_1,...,w_n$, call the vertices in level 2 $v_{12},...,v_{n1}$
(where the subscripts indicate to which vertices in level 1 the
vertex is connected), and the top vertex is $u$. See Figure
\ref{Fi:dn}.

\medskip

\begin{figure}[h]
\begin{center}
\includegraphics[height=1in]{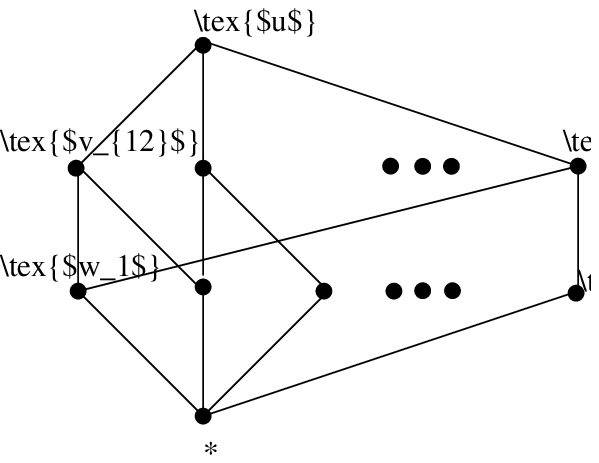}
\end{center}
\caption{$\Gamma_{D_n}$\label{Fi:dn}}
\end{figure}

\bigskip

We consider the algebra $A(\Gamma_{D_n})$ determined by this graph.
The construction of this algebra is described in ~\cite{GRSW} (see
$\S$ \ref{S:prelim}). In brief (using the definition given in
Proposition \ref{P:rsw2def}), the generators are the vertices and
the relations are that two paths which have the same starting and
ending vertices are equivalent.  These relations are, for $1\leq
i\leq n$,

\noindent 1) $v_{ii+1}(w_i-w_{i+1})-w_i^2+w_{i+1}^2$

\noindent 2) $u(v_{i i+1}-v_{i+1 i+2})-v_{i i+1}^2+v_{i+1
i+2}^2+(v_{i i+1}-v_{i+1 i+2})w_{i+1}$.

\bigskip

We will give here two bases for $A(\Gamma_{D_n})$, one for each of
the two definitions of $A(\Gamma)$ given in Section \ref{S:prelim}.
First we will give a basis in terms of the vertices (Proposition
\ref{P:rsw2def}).

\begin{prop}\label{P:diamond}A basis $\mathcal{B}$ of
$A(\Gamma_{D_n})$ consists of * and the set of all words in $u, \,
v_{i\,i+1}, \text{ and } w_i$ such that the following conditions on
the words hold: the subword $v_{ii+1}w_j$ only occurs if $j\neq
i+1$, the subword $uv_{ii+1}$ only if $i=1$, and $uv_{ii+1}w_j$ only
if $i=j=1$.\end{prop}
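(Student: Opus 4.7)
The natural strategy is to translate the basis $\mathcal{B}(\Gamma_{D_n})$ of Theorem \ref{T:rswbasis}, which is expressed in the atoms $\hat e(v,k)$, into the vertex presentation of Proposition \ref{P:rsw2def}, and check that the resulting collection of words coincides with the set in the statement.

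First, I would fix the distinguished edges $u\to v_{1,2}$, $v_{i,i+1}\to w_i$, and $w_i\to *$. After identifying each distinguished edge with its tail, the atoms $\hat e(v,k)$ become the six vertex-monomials
\[
w_i,\quad v_{i,i+1},\quad v_{i,i+1}w_i,\quad u,\quad uv_{1,2},\quad uv_{1,2}w_1.
\]
Every element of $\mathcal{B}(\Gamma_{D_n})$ is then a concatenation of these atoms subject to the non-covering condition $(v_i,k_i)\not\gtrdot(v_{i+1},k_{i+1})$ between consecutive atoms, with the empty concatenation giving $*$. The task is thus to characterize the resulting words in $u$, $v_{i,i+1}$, $w_i$ directly via their subword structure.

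The main step is a case analysis on each of the three controlled subwords. A subword $v_{i,i+1}w_j$ either sits inside the atom $v_{i,i+1}w_i$ (giving $j=i$), or arises from two adjacent atoms $v_{i,i+1}\cdot w_j$, admissible iff $v_{i,i+1}\not>w_j$, i.e.\ $j\notin\{i,i+1\}$; these combine to $j\ne i+1$. A subword $uv_{i,i+1}$ sits inside either $\hat e(u,2)=uv_{1,2}$ or $\hat e(u,3)=uv_{1,2}w_1$ (forcing $i=1$), or else in a split $\hat e(u,1)\cdot\hat e(v_{i,i+1},m)$ for $m\in\{1,2\}$; but since $u>v_{i,i+1}$ and $|u|-|v_{i,i+1}|=1$, the pair $(u,1)\gtrdot(v_{i,i+1},m)$ is always a covering, so the split is forbidden and $i=1$ is forced. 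A subword $uv_{i,i+1}w_j$ cannot arise from any of the splits $\hat e(u,1)\mid\hat e(v_{i,i+1},2)$, $\hat e(u,2)\mid\hat e(w_j,1)$, or $\hat e(u,1)\mid\hat e(v_{i,i+1},1)\mid\hat e(w_j,1)$ (each carries a covering adjacency), so it must lie inside the single atom $\hat e(u,3)=uv_{1,2}w_1$, giving $i=j=1$.

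Conversely, any vertex-monomial satisfying the three restrictions parses uniquely left-to-right into the above atoms by the greedy rule (take $uv_{1,2}w_1$ whenever possible, else $uv_{1,2}$, else $v_{i,i+1}w_i$, else a single vertex), and reversing the case analysis shows that the resulting sequence of atoms is admissible. Combined with Theorem \ref{T:rswbasis}, this bijection yields the claimed basis. The main obstacle is bookkeeping rather than ideas: one must verify that every possible factorization of each controlled subword has been accounted for and that the greedy parse cannot secretly introduce an inadmissible adjacency. Because there are only six atom types, three subword patterns, and one covering criterion to check, the verification is a small finite case check in which the cyclic indexing of $\mathbb{Z}/(n)$ plays no essential role.
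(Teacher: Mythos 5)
Your proof is correct, but it takes a genuinely different route from the paper. The paper proves this proposition from scratch inside the vertex presentation using Bergman's Diamond Lemma: it orders the generators, writes the defining relations as reductions, finds the single overlap ambiguity $uv_{i+1\,i+2}w_{i+2}$, resolves it by adding the extra reduction $uv_{ii+1}w_{i+2}\equiv\cdots$, and concludes that the irreducible monomials form a basis. You instead take the edge basis $\mathcal{B}'$ of Theorem \ref{T:rswbasis} as given, translate each atom $\hat e(v,k)$ into its vertex monomial, and match the non-covering conditions against the three subword conditions. Your approach is shorter and replaces the ambiguity computation by a finite check of covering relations; it also establishes Proposition \ref{P:ebasis} and the correspondence table $e(u,3)\leftrightarrow uv_{12}w_1$, etc., in one stroke (the paper states that correspondence only after this proof). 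What it costs is reliance on two external inputs: the RSW basis theorem and the exact identification of $\hat e(v,k)$ with the product of the $k$ vertices along the distinguished path under the isomorphism of Proposition \ref{P:rsw2def} (legitimate, since the isomorphism sends $e_x\mapsto x$ multiplicatively, but worth saying explicitly). The paper's Diamond Lemma argument, by contrast, is self-contained and yields an explicit confluent rewriting system, which is independently useful for computing in $A(\Gamma_{D_n})$. Two spots in your write-up deserve the extra care you flag: the case analysis for the subword $v_{ii+1}w_j$ should also record its occurrence inside the atom $\hat e(u,3)$ and the (forbidden) split $\hat e(u,2)\mid\hat e(w_j,1)$ — neither changes the conclusion $j\neq i+1$ — and the uniqueness of the admissible parse genuinely needs the observation that any non-greedy parse of $uv_{12}$, $uv_{12}w_1$, or $v_{ii+1}w_i$ forces a covering adjacency, which you correctly supply.
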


\begin{proof}
We can describe a basis of monomials for $A(\Gamma_{D_n})$ using
Bergman's Diamond Lemma ~\cite{Berg}. Put a partial order on the
generators such that $u>v_{i\,i+1}>w_j \forall i, j$ and
$v_{i\,i+1}>v_{j\,j+1} \text{ and } w_i>w_j \text{ if } i>j$. Order
monomials lexicographically.  The reductions are $uv_{i+1\,i+2}
\equiv
uv_{i\,i+1}-v_{ii+1}^2+v_{i+1i+2}^2+(v_{ii+1}-v_{i+1i+2})w_{i+1}, \,
1\leq i\leq n-1$ and $v_{ii+1}w_{i+1}\equiv
v_{ii+1}w_{i}-w_i^2+w_{i+1}^2, \, 1\leq i\leq n$.

We need to find a complete list of reductions so that all
ambiguities resolve.  The only ambiguity will occur when we have a
word that ends in $v$ overlapping with one beginning with $v$; i.e.
$uv_{i+1\,i+2}w_{i+2}$.

\noindent $(uv_{i+1\,i+2})w_{i+2} \equiv
(uv_{ii+1}-v_{ii+1}^2+v_{i+1i+2}^2+(v_{ii+1}-v_{i+1i+2})w_{i+1})w_{i+2}
\equiv
uv_{ii+1}w_{i+2}-v_{ii+1}^2w_{i+2}+v_{i+1i+2}(v_{i+1i+2}w_{i+1}-w_{i+1}^2+w_{i+2}^2)+(v_{ii+1}w_i-w_i^2
+w_{i+1}^2)w_{i+2}-v_{i+1i+2}w_{i+1}w_{i+2} \equiv
uv_{ii+1}w_{i+2}-v_{ii+1}^2w_{i+2}+v_{i+1i+2}^2w_{i+1}
-v_{i+1i+2}w_{i+1}^2+v_{i+1i+2}w_{i+1}w_{i+2}-w_{i+1}^2w_{i+2}+w_{i+2}^3+v_{ii+1}w_iw_{i+2}-w_i^2w_{i+2}
+w_{i+1}^2w_{i+2}-v_{i+1i+2}w_{i+1}w_{i+2} =
uv_{ii+1}w_{i+2}+v_{i+1i+2}^2w_{i+1}-v_{ii+1}^2w_{i+2}
-v_{i+1i+2}w_{i+1}^2+v_{ii+1}w_iw_{i+2}+w_{i+2}^3-w_i^2w_{i+2}$

and

\noindent $u(v_{i+1\,i+2}w_{i+2}) \equiv
u(v_{i+1i+2}w_{i+1}-w_{i+1}^2+w_{i+2}^2) \equiv (uv_{ii+1}-
v_{ii+1}^2+v_{i+1i+2}^2+(v_{ii+1}-v_{i+1i+2})w_{i+1})w_{i+1}\\
-uw_{i+1}^2+ uw_{i+2}^2 \equiv uv_{ii+1}w_i-uw_i^2+ uw_{i+1}^2-
v_{ii+1}(v_{ii+1}w_i-w_i^2+ w_{i+1}^2)+ v_{i+1i+2}^2w_{i+1} +
v_{ii+1}w_iw_{i+1}- w_i^2w_{i+1}+ w_{i+1}^3- v_{i+1i+2}w_{i+1}^2-
uw_{i+1}^2+ uw_{i+2}^2 \equiv uv_{ii+1}w_i+ uw_{i+2}^2- uw_i^2-
v_{ii+1}^2w_i+ v_{ii+1}w_i^2-v_{ii+1}w_{i}w_{i+1}+w_i^2w_{i+1}
-w_{i+1}^3+v_{i+1i+2}^2w_{i+1}+v_{ii+1}w_iw_{i+1}-w_i^2w_{i+1}+w_{i+1}^3-v_{i+1i+2}w_{i+1}^2
=
uv_{ii+1}w_i+uw_{i+2}^2-uw_i^2+v_{i+1i+2}^2w_{i+1}-v_{ii+1}^2w_i-v_{i+1i+2}w_{i+1}^2+v_{ii+1}w_i^2$.

Thus we need to add an additional reduction; namely,
$uv_{ii+1}w_{i+2}\equiv uv_{ii+1}w_i+uw_{i+2}^2-
uw_i^2+v_{ii+1}^2w_{i+2}-v_{ii+1}^2w_i-v_{ii+1}w_iw_{i+2}+v_{ii+1}w_i^2-w_{i+2}^3+w_i^2w_{i+2}$.
This does not create additional ambiguities since this reduction
ends in $w$ and we have no reductions which begin in $w$.  Also, no
reductions end in u.  Thus, all ambiguities now resolve.

Therefore, by Bergman's Diamond Lemma, $A(\Gamma_{D_n})$ may be
identified with the k-module of monomials which are irreducible
under these reductions.  Hence, $\mathcal{B}$ is a basis for
$A(\Gamma_{D_n})$.
\end{proof}

Next follows a basis in terms of edges (Thm \ref{T:rswbasis}).

\begin{prop}\label{P:ebasis}$\mathcal{B}' = \{e(x_1,k_1) \cdots e(x_l,k_l): l\geq 0,
x_1,...,x_l\in \{u,v_{12},...,v_{n1},w_1,...,w_n\}, \\1\leq k_i\leq
|x_i|,
(x_i,k_i)\not\gtrdot (x_{i+1},k_{i+1})\}$ is a basis for
$A(\Gamma_{D_n})$.
\end{prop}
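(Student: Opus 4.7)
The statement is a direct specialization of Theorem \ref{T:rswbasis} to the particular layered graph $\Gamma_{D_n}$, so my plan is essentially verification rather than new construction.

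First I would confirm that $\Gamma_{D_n}$ satisfies the standing hypotheses placed on $\Gamma$ in Section \ref{S:prelim}: its vertex set decomposes as $V_0\cup V_1\cup V_2\cup V_3$ with $V_0=\{*\}$, every edge goes from level $i$ to level $i-1$, and every vertex in $V_+$ has at least one edge out (the top vertex $u$ has edges to each $v_{i\,i+1}$, each $v_{i\,i+1}$ has edges to $w_i$ and $w_{i+1}$, and each $w_i$ has an edge to $*$). So $\Gamma_{D_n}$ is a layered graph of the type to which Theorem \ref{T:rswbasis} applies.

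Next I would fix a choice of distinguished edges: for each $w_i$ the unique edge $w_i\to *$, for each $v_{i\,i+1}$ some choice of edge to one of its two neighbors in level $1$, and for $u$ some edge to level $2$. With these distinguished edges chosen, the identification ``the distinguished edge $e_v$ may be identified with $v\in V_+$'' from the discussion preceding Proposition \ref{P:rsw2def} gives the labeling $x_i\in\{u,v_{12},\dots,v_{n1},w_1,\dots,w_n\}$ in the statement of $\mathcal{B}'$, and the integers $k_i$ range over $1\le k_i\le|x_i|$ exactly as in Theorem \ref{T:rswbasis}.

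With these identifications in place, the set $\mathcal{B}'$ is literally the set $\mathcal{B}(\Gamma_{D_n})$ of Theorem \ref{T:rswbasis}, and the covering condition $(x_i,k_i)\not\gtrdot(x_{i+1},k_{i+1})$ is the same condition imposed there. The conclusion that $\mathcal{B}'$ is a basis for $A(\Gamma_{D_n})$ then follows immediately from Theorem \ref{T:rswbasis}, together with the remark after Proposition \ref{P:rsw2def} that we write $e(v,k)$ in lieu of $\hat e(v,k)$.

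There is no real obstacle here; the only thing that deserves any care is making sure the distinguished edges are chosen consistently so that the elements $e(v,k)$ in $\mathcal{B}'$ are unambiguously defined, but any choice works since Theorem \ref{T:rswbasis} is independent of the choice of distinguished edges (different choices just produce different bases of the same algebra).
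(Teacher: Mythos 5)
Your proof is correct and takes the same route as the paper, which simply states that the proposition follows directly from Theorem \ref{T:rswbasis}. Your additional verification that $\Gamma_{D_n}$ satisfies the standing hypotheses (layered structure, $V_0=\{*\}$, every vertex in $V_+$ has an outgoing edge) is a reasonable elaboration of what the paper leaves implicit.
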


\begin{proof}This follows directly from Theorem \ref{T:rswbasis}.\end{proof}

\noindent In the preliminaries we stated that the algebra is
generated by distinguished edges and so we can identify the
distinguished edges with the vertices which are their tails - $e_v$
is identified with $v$.  Thus $e(v,k)$ can be expressed as a product
of k vertices (recall we are writing $e(v,k)$ in lieu of
$\hat{e}(v,k)$), and so there is a correlation between the bases
$\mathcal{B},\, \mathcal{B}'$ as follows:

$e(u,3) \leftrightarrow uv_{12}w_1 \qquad e(u,2) \leftrightarrow
uv_{12} \qquad e(u,1) \leftrightarrow u \\e(v_{ii+1},2)
\leftrightarrow v_{ii+1}w_i \qquad e(v_{ii+1},1) \leftrightarrow
v_{ii+1} \qquad e(w_i,1) \leftrightarrow w_i$

\noindent It is important to observe that in the associated graded
algebra, $\sigma \in Aut(A(\Gamma))$ permutes the elements of
gr$\mathcal{B}$ and gr$\mathcal{B}'$.

\medskip
Recall that the Hilbert series gives the graded dimension of an
algebra; the coefficient of $t^k$ is the k-th graded dimension (see
Section \ref{S:prelim} for the grading on our algebras).  We write
this as $H(t)=\sum dim(A(\Gamma)_{[k]})t^k$.

\begin{prop}The Hilbert series for $A(\Gamma_{D_n})$ is
$$H(t)=\frac{1}{1-(2n+1)t+(2n-1)t^2-t^3}=\frac{1-t}{1-(2n+2)t+4nt^2-2nt^3+t^4}$$\end{prop}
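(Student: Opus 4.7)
The plan is to compute $H(t)$ by summing $t^{\ell(w)}$ over the basis $\mathcal{B}$ of Proposition \ref{P:diamond}, where $\ell(w)$ denotes the length of the word $w$ in the alphabet $\{u, v_{i\,i+1}, w_i : 1\leq i\leq n\}$; this length agrees with the tensor-degree grading $A(\Gamma)_{[k]}$ from Section \ref{S:prelim}. The reductions built into Proposition \ref{P:diamond} amount to three forbidden-subword conditions: $v_{i\,i+1}w_{i+1}$ (any $i$), $uv_{i\,i+1}$ for $i\neq 1$, and $uv_{12}w_j$ for $j\neq 1$. I will count avoiding words via a transfer-matrix / auxiliary-generating-function setup, classifying basis words by their last letter (and, in one case, their last two letters).

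Let $A(t)=H(t)$ and introduce $U(t), V(t), W(t), UV(t)$ for basis words ending, respectively, in $u$; in some $v_{i\,i+1}$ not preceded by $u$; in some $w_i$; and in $uv_{12}$, so that $A=1+U+V+W+UV$. The cyclic symmetry in the index $i$ lets me lump all $V_i$'s and all $W_i$'s into single generating functions, while splitting off the extra state $UV$ is what encodes the triple constraint $uv_{12}w_j$ without double counting. This splitting is the only subtle modelling step; everything else is bookkeeping.

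For each state I will read off from the allowed adjacencies how many letters may be appended. This yields the identities $U=tA$ and $UV=tU$; the identity $V=nt(A-U)$, reflecting that a $v_{i\,i+1}$ may follow anything except a $u$; and
\[W=t\bigl[n+nU+(n-1)V+nW+UV\bigr]=t\bigl[nA-V-(n-1)UV\bigr],\]
where the coefficient $n-1$ in front of $V$ comes from the forbidden pair $v_{j\,j+1}w_{j+1}$ eliminating one $w_i$, and the coefficient $1$ in front of $UV$ comes from the triple constraint forcing $w_1$.

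Substituting the closed forms $U=tA$, $UV=t^2 A$, $V=nt(1-t)A$, and the resulting $W=(nt-nt^2+t^3)A$ into $A=1+U+V+W+UV$ and collecting powers of $t$ produces the single equation
\[A\bigl(1-(2n+1)t+(2n-1)t^2-t^3\bigr)=1,\]
which is the first form of $H(t)$; the second form then follows by multiplying numerator and denominator by $1-t$. There is no genuine obstacle, but the one step that requires care is introducing the auxiliary state $UV$, since without it the triple restriction $uv_{12}w_j$ cannot be captured by a system that looks only one letter back.
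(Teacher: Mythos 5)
Your argument is correct and is in substance the paper's first proof: both count the Diamond Lemma basis of Proposition \ref{P:diamond} by a one-letter-extension analysis, the paper classifying words by their \emph{first} letter to obtain the scalar recurrence $d_k=(2n+1)d_{k-1}-(2n-1)d_{k-2}+d_{k-3}$, while you classify by \emph{last} letter and solve the resulting generating-function system, with your auxiliary state for words ending in $uv_{12}$ playing exactly the role of the paper's look-two-back term $d_{k-3}$ in handling the triple constraint $uv_{12}w_j$. (The paper also records a second, independent derivation via the Hilbert-series formula of Equation (\ref{Eq:Hilbert}), which your write-up does not use.)
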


\begin{proof}
We will give two proofs of this proposition.  The first one uses
Proposition \ref{P:diamond} and induction to count basis elements.
This will give us a recursion that can then be written as a
generating function.  The second method of proof is much shorter and
uses a theorem from ~\cite{RSW} that gives a formula for the Hilbert
series of an algebra associated to a directed, layered graph.

\noindent \textbf{Method 1:} By Proposition \ref{P:diamond}, there
are $n(n-1)$ subwords of the form $v_{ii+1}w_{j}$ which can occur in
an element of $\mathcal{B}$ and exactly one of the forms
$uv_{i\,i+1}$ and $uv_{i\,i+1}w_j$. This means that
there are $n$ subwords of the form $v_{ii+1}w_{j}$ which cannot
occur, $n-1$ of the form $uv_{i\,i+1}$, and $n^2-1$ of the form
$uv_{i\,i+1}w_j$.

Let $d_k=\text{dim}(A(\Gamma_{D_n})_{[k]})$.

\noindent We will proceed by induction.

\begin{itemize}
\item $d_0=1$

\item $d_1=2n+1$: Every word of length one belongs to the basis since
 all reducible subwords are of length greater than one. A basis is:
 $\{u$, $v_{ii+1}$, $w_i: 1\leq i\leq n\}$

\item $d_2=4n^2+2n+2$: There are $(2n+1)^2$ elements of length two and
$2n-1$ of them are reducible.  Hence, the dimension is
$(2n+1)^2-(2n-1)=4n^2+2n+2$. A basis is: $\{w_iu$, $w_iv_{j\,j+1}$,
$w_iw_j$, $uu$, $uv_{1\,2}$, $uw_i$, $v_{ii+1}u$,
$v_{ii+1}v_{jj+1}$, $v_{ii+1}w_j (j\neq i+1): 1\leq i,j\leq n\}$
\end{itemize}

\noindent Use induction to determine $d_k$: \begin{itemize}

\item If $x\in\mathcal{B}$ is a word of length $k-1$, then $w_ix\in
\mathcal{B}$.  Thus there are $nd_{k-1}$ words of length $k$ in
$\mathcal{B}$ starting with $w_i$.

\item If $x\in \mathcal{B}$ is a word of length $k-1$, then
$v_{ii+1}x\in \mathcal{B}$ if and only if $x$ does not begin with
$w_{i+1}$. As determined in the previous bullet, there are
$nd_{k-2}$ basis elements starting with $w_j, 1\leq j\leq n$ in
degree $k-1$, and thus $d_{k-2}$ of them beginning with $w_{i+1}$.
Hence, for each $i$, there are $d_{k-1}-d_{k-2}$ possibilities for
$x$. Therefore, there are $n(d_{k-1}-d_{k-2})$ words of length $k$
of the form $v_{i\,i+1}x$.

\item We will treat the case of words beginning with $u$ in three
cases.  If $x\in \mathcal{B}$ of length $k-2$, $uux\in \mathcal{B}$
if and only if $x$ does not begin with $v_{ii+1}, 2\leq i\leq n$.
There are $d_{k-1}-nd_{k-2}-n(d_{k-2}-d_{k-3})$ words beginning with
$u$ in degree $k-1$ (from previous bullets).  Thus, there are that
many words of the form $uux\in \mathcal{B}$.  Next $uv_{12}x\in
\mathcal{B}$ if and only if $x$ does not begin with $w_i, 2\leq
i\leq n$.  Thus, there are $d_{k-2}-(n-1)d_{k-3}$ words of the form
$uv_{12}x$.  Finally, $uw_ix\in \mathcal{B}$ for all $x$.  Thus,
there are $nd_{k-2}$ words of this form.  This gives us a total of
$d_{k-1}-2nd_{k-2}+nd_{k-3}+d_{k-2}-(n-1)d_{k-3}+nd_{k-2} =
d_{k-1}-(n-1)d_{k-2}+d_{k-3}$ words beginning with $u$.

\end{itemize}

\noindent Thus, $d_k =
nd_{k-1}+n(d_{k-1}-d_{k-2})+d_{k-1}-(n-1)d_{k-2}+d_{k-3} =
(2n+1)d_{k-1}-(2n-1)d_{k-2}+d_{k-3}$.

We can write this recurrence formula as a generating function
following the method described by Wilf in ~[\cite{Wilf},\S1.2].  Let
$H(t)=\sum_{i\geq0}d_it^i$ denote the generating function that we
are trying to find.  Let $d_{-2}=d_{-1}=0, d_0=1$.  Multiply both
sides of the recursion by $t^i$ and sum over $i\geq0$.  Then on the
left-hand side we have $d_1+d_2t+d_3t^2+...=\frac{H(t)-d_0}{t}$. And
on the right hand side we have $(2n+1)H(t)-(2n-1)tH(t)+t^2H(t)$.
Solving for $H(t)$:

$H(t)-1=H(t)[(2n+1)t-(2n-1)t^2+t^3] \Rightarrow$

$H(t)[1-(2n+1)t+(2n-1)t^2-t^3]=1 \Rightarrow$

$H(t)=\frac{1}{1-(2n+1)t+(2n-1)t^2-t^3}$.

\medskip

\noindent \textbf{Method 2:}  ~[\cite{RSW},Thm 2] gives the Hilbert
series formula as:
\begin{equation}\label{Eq:Hilbert}H(A(\Gamma),t)=\frac{1-t}{1+\displaystyle \sum_{v_1>\cdots>v_l\geq *}
(-1)^lt^{|v_1|-|v_l|+1}}.\end{equation}

In this example, the possible sequences indexing the sum are: $u,
v_{ii+1},w_i,*, u>v_{ii+1}, v_{ii+1}>w_i, v_{ii+1}>w_{i+1}, w_i>*,
u>w_i, u>v_{ii+1}>w_i, u>v_{ii+1}>w_{i+1}, v_{ii+1}>*,
v_{ii+1}>w_i>*, v_{ii+1}>w_{i+1}>*, u>*, u>v_{ii+1}>*, u>w_i>*,
u>v_{ii+1}>w_{i}>*, \text{ and } u>v_{ii+1}>w_{i+1}>*$.  Thus, the
coefficients of $t,\, t^2,\, t^3,\text{ and } t^4$ are $-(2n+2),\,
n+2n+n=4n, \, n+n-2n-2n=-2n,\text{ and } 1-2n+2n=1$, respectively.
The coefficient of $t^k$ for $k\geq 5$ is zero.
Thus \[H(A(\Gamma_{D_n}),t)=\frac{1-t}{1-(2n+2)t+4nt^2-2nt^3+t^4}. \qedhere\]
\end{proof}

\subsection{The Algebra $Q_n$}

The algebras $Q_n$ are the algebras associated with the lattice of
subsets of $\{1,2,...,n\}$.  Label the vertices in level $i$ by
$\{v_A: A\subseteq \{1,...,n\}, |A|=i\}$.  $Q_4$ is shown in Figure
\ref{Fi:q4} below.  Their history and some properties are discussed
in ~\cite{GRSW}.

\medskip
\begin{figure}[h]
\begin{center}
\includegraphics[height=1in]{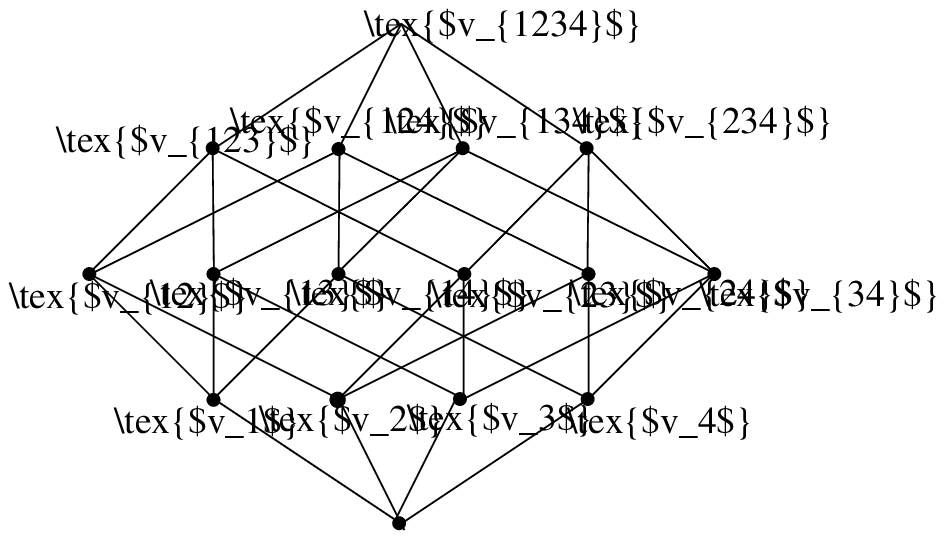}
\end{center}
\caption{$Q_4$\label{Fi:q4}}
\end{figure}
\medskip

\newpage

Following the definition given in Proposition \ref{P:rsw2def}, the
generators of $Q_n$ are the vertices $\{v_A:A\subseteq
\{1,...,n\}\}$ and the relations are

\[(*) \{v_{A}(v_{A\backslash i}- v_{A\backslash j})- v_{A\backslash
i}^2 +v_{A\backslash j}^2 + (v_{A\backslash i}-v_{A\backslash j})
v_{A\backslash\{i,j\}}: A\subseteq \{1,...,n\}, i,j\in A\}.\]

\noindent Furthermore,
\begin{prop}\label{P:qnebasis}$\mathcal{B_Q} =
\{e(v_{A_1},k_1) \cdots e(v_{A_l},k_l): l\geq 0,
A_1,...,A_l\subseteq \{1,...,n\}, 1\leq k_i\leq |v_{A_i}|=|A_i|,
(v_{A_i},k_i)\not\gtrdot (v_{A_{i+1}},k_{i+1})\}$ is a basis for
$Q_n$.
\end{prop}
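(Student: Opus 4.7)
The proposition is an instance of Theorem \ref{T:rswbasis} applied to the particular graph underlying $Q_n$, so the plan is simply to verify that this graph satisfies the hypotheses of that theorem and then invoke it directly, exactly as was done for $\mathcal{B}'$ in Proposition \ref{P:ebasis}.

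First I would make the graph $\Gamma$ for $Q_n$ explicit in the layered-graph language of Section \ref{S:prelim}. Take $V = \{v_A : A \subseteq \{1,\dots,n\}\}$ with the stratification $V_i = \{v_A : |A| = i\}$, so that $V_0 = \{v_\emptyset\} = \{*\}$. Edges run from $v_A$ to $v_{A\setminus\{i\}}$ for each $i \in A$, which puts $t(e) \in V_i$ and $h(e) \in V_{i-1}$ whenever $e$ is such an edge. This is precisely the Hasse graph of the Boolean lattice, and by construction it is layered with $V_0 = \{*\}$. Every $v_A \in V_+$ has at least $|A| \geq 1$ outgoing edges, so one may fix any one of them as the distinguished edge $e_{v_A}$; this satisfies the final hypothesis needed for Theorem \ref{T:rswbasis}.

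Next I would note that $Q_n$ as defined by the relations $(*)$ in the excerpt coincides with $A(\Gamma)$ in the $T(V_+)/R_V$ presentation of Proposition \ref{P:rsw2def}: the graph $\Gamma$ is obviously uniform, since for any $v_A$ with $|A|\ge 2$ and any $u = v_{A\setminus\{i\}}, w = v_{A\setminus\{j\}}$ in $S_1(v_A)$, the vertex $v_{A\setminus\{i,j\}}$ lies in $S_1(u) \cap S_1(w)$ (so the diamond condition holds). Hence the presentation of $Q_n$ by the relations $(*)$ is exactly the presentation of $A(\Gamma)$ given by Proposition \ref{P:rsw2def}, and $Q_n = A(\Gamma)$.

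With $Q_n$ identified as an $A(\Gamma)$ for a layered graph meeting the standing hypotheses, Theorem \ref{T:rswbasis} applies and produces the basis $\mathcal{B}(\Gamma)$, which is precisely the set $\mathcal{B_Q}$ in the statement. There is no real obstacle here — the only thing worth checking carefully is that the layered-graph and uniformity hypotheses truly hold for the Boolean lattice picture of $Q_n$, and both are immediate. Accordingly the write-up can be as short as Proposition \ref{P:ebasis}: one sentence invoking Theorem \ref{T:rswbasis}.
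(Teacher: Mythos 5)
Your proposal is correct and matches the paper's approach exactly: the paper's proof of this proposition is the single sentence ``This follows directly from Theorem \ref{T:rswbasis}.'' Your additional verification that the Boolean lattice is a layered graph satisfying the standing hypotheses (and the uniformity check identifying the relations $(*)$ with the presentation of Proposition \ref{P:rsw2def}) is sound and simply makes explicit what the paper leaves implicit.
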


\begin{proof}This follows directly from Theorem \ref{T:rswbasis}.\end{proof}

\medskip

In ~[\cite{RSW},Thm 3], Retakh, Serconek, and Wilson prove that
$$H(Q_n,t)=\frac{1-t}{1-t(2-t)^n}$$ using Equation \ref{Eq:Hilbert}.

\section{Automorphism Groups of the Algebras}\label{S:aut}

Throughout this paper Aut($A$) will denote the filtration-preserving
automorphisms of the graded algebra $A$ (see $\S$ \ref{S:prelim}).

\begin{lemma}\label{L:grpaut}$Aut(A(\Gamma)) \supseteq k^*\times \text{Aut}(\Gamma)$\end{lemma}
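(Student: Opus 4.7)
The plan is to define commuting embeddings of $k^*$ and $\mathrm{Aut}(\Gamma)$ into $\mathrm{Aut}(A(\Gamma))$ and assemble them into the asserted direct product.

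For $\lambda\in k^*$, define $\phi_\lambda\in\mathrm{End}(T(E))$ multiplicatively from $\phi_\lambda(e)=\lambda e$ on edges; for $\sigma\in\mathrm{Aut}(\Gamma)$, define $\phi_\sigma\in\mathrm{End}(T(E))$ multiplicatively from $\phi_\sigma(e)=\sigma(e)$. Each descends to $A(\Gamma)=T(E)/R$: every generator $e(\pi_1,k)-e(\pi_2,k)$ of $R$ is tensor-degree-$k$ homogeneous, so $\phi_\lambda$ scales it by $\lambda^k\in k^*$, while $\phi_\sigma$ sends it to $e(\sigma\pi_1,k)-e(\sigma\pi_2,k)$, itself a defining relation because $\sigma$ preserves $t$ and $h$ and hence sends cospan pairs of paths to cospan pairs. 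The inverses $\phi_{\lambda^{-1}}$ and $\phi_{\sigma^{-1}}$ confirm both are algebra automorphisms, and both preserve the level of each edge and hence the level filtration on $A(\Gamma)$, so both lie in $\mathrm{Aut}(A(\Gamma))$. Commutativity is immediate on generators: $\phi_\lambda\phi_\sigma(e)=\lambda\sigma(e)=\phi_\sigma\phi_\lambda(e)$. Thus $(\lambda,\sigma)\mapsto\phi_\lambda\phi_\sigma$ is a well-defined group homomorphism $k^*\times\mathrm{Aut}(\Gamma)\to\mathrm{Aut}(A(\Gamma))$.

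It remains to verify injectivity. Suppose $\phi_\lambda\phi_\sigma=\mathrm{id}$. I would pass to $\mathrm{gr}\,A(\Gamma)$ (on which both maps descend since they preserve the filtration) and test on the tensor-degree-one part. By Theorem \ref{T:rswbasis}, $\{e(v,1):v\in V_+\}$ is a basis of $A(\Gamma)_{[1]}$. In $\mathrm{gr}\,R$, the only tensor-degree-one relations come from the leading terms of $e(\pi_1,1)-e(\pi_2,1)$, namely differences of first edges of cospan paths. Since every $v\in V_+$ has at least one path to $*$, any two edges sharing a tail $v$ can be extended to cospan paths ending at $*$ and so are identified in $\mathrm{gr}\,A(\Gamma)_{[1]}$; thus $\mathrm{gr}\,A(\Gamma)_{[1]}\cong\mathrm{span}(V_+)$ via $e_v\leftrightarrow v$. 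On this space $\phi_\sigma$ acts as the linear extension of the permutation $v\mapsto\sigma(v)$ and $\phi_\lambda$ as scalar multiplication by $\lambda$, so $\phi_\lambda\phi_\sigma=\mathrm{id}$ forces $\lambda\sigma(v)=v$ for every basis vector $v$, and therefore $\sigma=\mathrm{id}$ and $\lambda=1$.

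The step I expect to be the main obstacle is this last identification. In $A(\Gamma)_{[1]}$ itself, $\sigma(e_v)$ need not equal $e_{\sigma(v)}$ but only agrees with it modulo strictly lower-level terms, so the action of $\phi_\sigma$ on the basis of $A(\Gamma)_{[1]}$ is not a straight permutation. Passing to $\mathrm{gr}\,A(\Gamma)$ is essential to extract the clean permutation picture, and one must keep the level grading and the tensor-degree grading carefully distinct when invoking the leading-term description of $\mathrm{gr}\,R$.
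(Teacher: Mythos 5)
Your proof is correct and follows essentially the same route as the paper: extend a graph automorphism to $T(E)$ and observe it preserves $R$ because it sends cospan pairs of paths to cospan pairs, and note that scalar multiplication preserves $R$ because each generator $e(\pi_1,k)-e(\pi_2,k)$ is homogeneous of tensor degree $k$. The injectivity verification in your second and third paragraphs is extra care the paper omits entirely (it only asserts the containment), and your passage to $\mathrm{gr}\,A(\Gamma)$ there is a sound way to carry it out.
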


\begin{proof}Any automorphism $\tilde{\sigma}$ of the graph extends
to an automorphism $\sigma$ of $T(E)$.  Since $\tilde{\sigma}$
preserves paths, $\sigma$ preserves the ideal $R$ defined in Section
\ref{S:prelim}. Hence it induces an automorphism, again denoted by
$\sigma$, of $A(\Gamma)=T(E)/R$. Also, for any scalar $\alpha$,
multiplication by $\alpha$ is an automorphism because the relations
are homogeneous. Thus $Aut(A(\Gamma)) \supseteq k^*\times
\text{Aut}(\Gamma)$. \end{proof}

Let $\Gamma$ be a graph with a unique minimal vertex at level 0,
$V_0=\{*\}$, whose vertices are labeled in the following manner.
Label the vertices in level one by $\{v_1,...,v_m\}$ and index those
in level $r,\, 2 \leq r \leq n$, by a subset of the power set of
$\{1,...,m\}$.  Let the edges connect vertices by minimal
containment of their indices.  There is a path from $v_A$
($|v_A|>|v_i|$) to $v_i$ if and only if $i\in A$.

\begin{thm}\label{T:autgen} Let $\Gamma$ be a graph as described
above. If $\Gamma$ satisfies i) $|V_1|>2$, ii) no two vertices have
the same label, and iii) there are either zero or two paths between
any two vertices which are two levels apart in $\Gamma$, then
Aut($A(\Gamma))=k^*\times \text{Aut}(\Gamma)$, k the base field.
\end{thm}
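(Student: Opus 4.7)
The plan is to establish $\mathrm{Aut}(A(\Gamma)) \subseteq k^{*} \times \mathrm{Aut}(\Gamma)$, since Lemma~\ref{L:grpaut} gives the reverse inclusion. Given $\phi \in \mathrm{Aut}(A(\Gamma))$, filtration-preservation produces a graded automorphism $\bar\phi$ of $\mathrm{gr}\,A(\Gamma) \cong T(V_{+})/\mathrm{gr}\,R_V$ (Proposition~\ref{P:rsw2grdef}), and I would first work on the graded level. Since $\mathrm{gr}\,R_V$ has no relations in weight $<3$, $\bar\phi$ restricts to a linear automorphism of $V_1$ and of the weight-$2$ subspace $V_2 \oplus V_1^{\otimes 2}$; write $\bar\phi(v_i) = \sum_k a_{ik} v_k$ for $v_i \in V_1$ and $\bar\phi(v_B) = \sum_C \lambda^B_C v_C + \sum_{k,l} \mu^B_{kl} v_k v_l$ for $v_B \in V_2$.

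The first key step is to apply $\bar\phi$ to each weight-$3$ relation $v_B(v_i - v_j) = 0$ (with $B = \{i,j\} \in V_2$) and expand in the weight-$3$ basis of $\mathrm{gr}\,A(\Gamma)$. Matching coefficients of the pure cubics $v_k v_l v_n$ forces $\mu^B_{kl} = 0$; otherwise the equation would yield $\bar\phi(v_i) = \bar\phi(v_j)$, contradicting injectivity. Then, analyzing $(\sum_C \lambda^B_C v_C)(\bar\phi(v_i)-\bar\phi(v_j))=0$ against the weight-$3$ basis elements $\epsilon_C := v_C v_p$ (the common value for $p \in C$ modulo the relations) and $v_C v_k$ (for $k \notin C$) shows that the $V_2$-support of $\bar\phi(v_B)$ is a single vertex: two distinct $2$-subsets $C,C'$ in the support would have $|C \cap C'| \leq 1$, and the combined constraints ``support in $C$'' and ``zero coordinate-sum on $C$'' would force $\bar\phi(v_i) = \bar\phi(v_j)$. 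Hence $\bar\phi(v_B) = \lambda^B v_{\sigma_2(B)}$ and $\bar\phi(v_i) - \bar\phi(v_j) = d_{ij}(v_p - v_q)$ where $\sigma_2(B) = \{p,q\}$.

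The main obstacle is promoting these pairwise-difference constraints to $\bar\phi(v_i) = \alpha\, v_{\sigma(i)}$ for a single permutation $\sigma$ and common scalar $\alpha$. Here I would exploit condition~(iii): each $v_A \in V_3$ forces all three pair-vertices $v_{A \setminus \{i\}}$ to lie in $V_2$, producing a ``triangle'' of relations among the three level-$1$ vertices below $v_A$. For such a triangle $\{i,j,k\}$, the identity $(\bar\phi(v_i) - \bar\phi(v_k)) = (\bar\phi(v_i) - \bar\phi(v_j)) + (\bar\phi(v_j) - \bar\phi(v_k))$ combined with each summand having $2$-element support forces the three supports $\sigma_2(\{i,j\}),\sigma_2(\{i,k\}),\sigma_2(\{j,k\})$ to be the three pairs of a common $3$-set. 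Condition~(i), $|V_1|>2$, makes such triangles non-degenerate; condition~(ii) makes $\sigma$ well-defined on labels. Applying $\bar\phi$ to each weight-$5$ relation $v_A(v_B - v_{B'}) = 0$ then forces $\lambda^B = \lambda^{B'}$ for $B,B' \in S_1(v_A)$, and propagation across the graph yields a common $\alpha$.

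Finally, an induction on level extends $\sigma$ to all of $V_{+}$: assuming $\bar\phi(v_C) = \alpha^{|C|}\, v_{\sigma(C)}$ for $|C|<r$, applying $\bar\phi$ to the weight-$(2r-1)$ relation $v_A(v_C - v_{C'}) = 0$ for $v_A \in V_r$ and $C,C' \in S_1(v_A)$ mirrors the level-$2$ analysis and yields $\bar\phi(v_A) = \alpha^r\, v_{\sigma(A)}$; the resulting $\sigma$ is a graph automorphism $\tilde\sigma$. Lifting back: $\psi := \phi \circ (\alpha\tilde\sigma)^{-1}$ has trivial induced graded, and a standard filtration-degree induction (using the augmentation-preservation implicit in the Aut convention for connected graded algebras) forces $\psi = \mathrm{id}$, so $\phi = \alpha\tilde\sigma \in k^{*} \times \mathrm{Aut}(\Gamma)$.
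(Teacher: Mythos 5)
Your reduction to the associated graded algebra is where the argument breaks. The graded relations coming from level-two vertices are $v_B(v_i-v_j)=0$, and every constraint you extract from them --- the vanishing of the quadratic part $\mu^B_{kl}$, the single-vertex $V_2$-support of $\bar\phi(v_B)$, the triangle compatibilities from $V_3$, and the equality of the scalars $\lambda^B$ --- involves the level-one generators only through the differences $\bar\phi(v_i)-\bar\phi(v_j)$. Differences are invariant under a simultaneous translation of all level-one generators, and indeed $\mathrm{gr}\,A(\Gamma)$ has graded automorphisms not of the form $\alpha\tilde\sigma$: for any fixed $u_0\in\mathrm{span}(V_1)$, the assignment $v_i\mapsto v_i+u_0$ for $v_i\in V_1$ and $v_A\mapsto v_A$ for $|v_A|\geq 2$ preserves every generator $v_A(v_u-v_w)$ of $\mathrm{gr}\,R_V$ and is invertible. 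Consequently no analysis carried out purely inside $\mathrm{gr}\,A(\Gamma)$ can yield $\bar\phi(v_i)=\alpha\,v_{\sigma(i)}$; your triangle step shows only that the supports of the differences are organized by a permutation, a condition the translation example satisfies with $\sigma=\mathrm{id}$. (Separately, the hypotheses of Theorem \ref{T:autgen} permit graphs with $V_3=\emptyset$, or with level-one vertices lying under no level-three vertex, so the triangles you want from condition (iii) need not exist; the paper instead uses $|V_1|>2$ to intersect the supports obtained from two pairs $\{i,j\}$ and $\{i,k\}$.)

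What closes this gap in the paper is the inhomogeneous part of the actual relation in $A(\Gamma)$, namely $v_{ij}(v_i-v_j)=v_i^2-v_j^2$ (Proposition \ref{P:rsw2def}), not just its leading term. Applying $\sigma$ and comparing the purely $V_1$-quadratic components, the right-hand side $\sigma(v_i)^2-\sigma(v_j)^2$ must be expressible with the constrained ``second factors'' produced by reducing the left-hand side; writing $z$ for the part of $\sigma(v_i)$ supported off $\{v_{\tau(i)},v_{\tau(j)}\}$, this forces $(\sigma(v_i)-\sigma(v_j))z=0$, hence $z=0$, after which $|V_1|>2$ pins $\sigma(v_i)$ to a single $v_{\tau(i)}$ and the same relation forces a common scalar $c$ and kills the coefficients $b^{ij}_{\tau(i)},b^{ij}_{\tau(j)}$. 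Your proposal never invokes these lower-order terms, so it cannot recover this step; for the same reason your final assertion, that a filtered automorphism inducing the identity on $\mathrm{gr}\,A(\Gamma)$ must be the identity, is not a formality here --- it is exactly the statement that the $b$-coefficients vanish, which the paper again proves from the full relation. To repair the proof you would need to redo the level-one analysis in $A(\Gamma)$ itself (or otherwise import the inhomogeneous relations), which is essentially the paper's argument.
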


\begin{proof}  By Lemma \ref{L:grpaut}, $Aut(A(\Gamma)) \supseteq k^*\times \text{Aut}(\Gamma)$.  Any
automorphism of the algebra must preserve the relations.  Thus, for
all subsets $B,C\subseteq A \subseteq \{1,...,m\} \text{ such that
}v_A,v_B,v_C\in \Gamma \text{, }|v_A|\geq 2$, and
$|v_A|-|v_B|=|v_A|-|v_C|=1, |v_A|-|v_{B\cap C}|=2$ (i.e. $v_A, v_B,
v_C, v_{B\cap C}$ form a diamond), the image of $v_{A}(v_{B}-
v_{C})- v_{B}^2 +v_{C}^2 + (v_{B}-v_{C}) v_{B\cap C}$ must equal
zero. Consider first paths from level 2 to level 0. Because of our
assumption that there are exactly two paths from each vertex in
level two, $|A|=2$ for $v_A\in V_2$. Let $\sigma \in Aut(A(\Gamma))$
and $v_{A_1},...,v_{A_{m_2}}$ be vertices in level 2; then
$\sigma(v_{ij})=a_{A_1}^{ij}v_{A_1}+\cdots+a_{A_{m_2}}^{ij}v_{A_{m_2}}+b_1^{ij}v_1
+\cdots+b_m^{ij}v_m$ for all $i,j$ such that $v_{ij}\in \Gamma$ and
$\sigma(v_i)=c_1^iv_1+\cdots+c_m^iv_m$ for all $i$, where all
coefficients are in k.

Now $\sigma(v_{ij}(v_i-v_j))=\sigma(v_i^2-v_j^2)$ implies

\noindent $(a_{A_1}^{ij}v_{A_1}+\cdots
+a_{A_{m_2}}^{ij}v_{A_{m_2}}+b_1^{ij}v_1 +\cdots+b_m^{ij}v_m)
((c_1^i-c_1^j)v_1+\cdots+(c_m^i-c_m^j)v_m)$

\noindent
$=(c_1^iv_1+\cdots+c_m^iv_m)^2-(c_1^jv_1+\cdots+c_m^jv_m)^2.$

\noindent There are no $v_A's$ with $|A|=2$ on the right-hand side,
and so we must use our relations to eliminate them from the
left-hand side. Thus, every occurrence of $v_{kl}$ must be followed
by $v_k-v_l$; and hence, $c_k^i-c_k^j=-(c_l^i-c_l^j)$ and
$c_m^i-c_m^j=0$ if $m\neq k,l$.  Therefore, if $a_{kl}^{ij} \neq 0$,

\begin{equation}\label{Eq:ws2}(c_1^i-c_1^j)v_1+\cdots+(c_m^i-c_m^j)v_m =
(c_k^i-c_k^j)(v_k-v_l).\end{equation}

This has two consequences.  First, at most one $a_{kl}^{ij}$ can be
nonzero.  If all $a_{kl}^{ij}$ were zero, the element $v_{i\, j}
\notin (A(\Gamma))_{(1)}$ would be sent to an element in
$(A(\Gamma))_{(1)}$, which we cannot allow because then $\sigma$
would not be invertible. Thus $a_{kl}^{ij}$ must be nonzero for
exactly one $\{kl\}$.  Let us denote this set by
$\{\tau(i)\tau(j)\}$. Then $\sigma(v_{ij}) = a_{\tau(i)\tau(j)}^{ij}
v_{\tau(i)\,\tau(j)}+b_1^{ij}v_1+\cdots +b_m^{ij}v_m$.  If
$\tau(ij)=\tau(kl)$, then
$\sigma(v_{i\,j}-a_{\tau(i)\tau(j)}^{ij}(a_{\tau(k)\tau(l)}^{kl})^{-1}v_{k\,l})
\in (A(\Gamma))_{(1)}$; this implies that $\{ij\}=\{kl\}$. Thus
$\tau$ is one-to-one, and so is in $S_n$.

A second consequence of (\ref{Eq:ws2}) is that $c_r^i-c_r^j$ is zero
if and only if $r\neq \tau(i), \tau(j)$.

We now have from (\ref{Eq:ws2}):
$(a_{\tau(ij)}^{ij}v_{\tau(i)\,\tau(j)}+b_1^{ij}v_1+\cdots+b_m^{ij}v_m)
(c_{\tau(i)}^i-c_{\tau(i)}^j)(v_{\tau(i)}-v_{\tau(j)}) =\\
(c_1^iv_1+\cdots +c_m^iv_m)^2-(c_1^jv_1+\cdots+c_m^jv_m)^2$. (Recall
$c_{\tau(i)}^i-c_{\tau(i)}^j = -(c_{\tau(j)}^i-c_{\tau(j)}^j)$.)

Let $z=\displaystyle\sum_{r\neq \tau(i), \tau(j)} c_r^iv_r =
\displaystyle\sum_{r\neq \tau(i), \tau(j)} c_r^jv_r$.  Then
\begin{align}\label{Eq:zinreln}&(a_{\tau(ij)}^{ij}v_{\tau(i)\,\tau(j)}+b_1^{ij}v_1+\cdots+b_m^{ij}v_m)
((c_{\tau(i)}^i-c_{\tau(i)}^j)(v_{\tau(i)}-v_{\tau(j)})\\ &=
(c_{\tau(i)}^iv_{\tau(i)}+c_{\tau(j)}^{i}v_{\tau(j)}+z)^2-
(c_{\tau(i)}^jv_{\tau(i)}+c_{\tau(j)}^jv_{\tau(j)}+z)^2\nonumber\\
&= (c_{\tau(i)}^iv_{\tau(i)}+c_{\tau(j)}^{i}v_{\tau(j)})^2-
(c_{\tau(i)}^jv_{\tau(i)}+c_{\tau(j)}^jv_{\tau(j)})^2+
(c_{\tau(i)}^iv_{\tau(i)}+c_{\tau(j)}^{i}v_{\tau(j)})z\nonumber\\ &+
z(c_{\tau(i)}^iv_{\tau(i)}+c_{\tau(j)}^{i}v_{\tau(j)})-
(c_{\tau(i)}^jv_{\tau(i)}+c_{\tau(j)}^jv_{\tau(j)})z-
z(c_{\tau(i)}^jv_{\tau(i)}+c_{\tau(j)}^jv_{\tau(j)}).\nonumber\end{align}

On the left-hand side of (\ref{Eq:zinreln}), $v_r$, for $r\neq
\tau(i), \tau(j)$, is never the second term of the product of two
$v_r's$.  Hence, $(c_{\tau(i)}^iv_{\tau(i)}+
c_{\tau(j)}^{i}v_{\tau(j)}- c_{\tau(i)}^jv_{\tau(i)}-
c_{\tau(j)}^jv_{\tau(j)})z=0$. This implies that either
$c_{\tau(i)}^i=c_{\tau(i)}^j$ and $c_{\tau(j)}^i=c_{\tau(j)}^j$,
which is a contradiction since $\sigma(v_i)\neq \sigma(v_j)$, or
$z=0$. Thus $z=0$ and so $c_r^i=c_r^j=0$ for all $r\neq \tau(i),
\tau(j)$. Now we have

\noindent $(a_{\tau(ij)}^{ij}v_{\tau(i)\,\tau(j)}+b_1^{ij}v_1+\cdots
+b_m^{ij}v_m) (c_{\tau(i)}^i-c_{\tau(i)}^j)(v_{\tau(i)}-v_{\tau(j)})
\equiv$

\noindent $a_{\tau(ij)}^{ij}(c_{\tau(i)}^i-c_{\tau(i)}^j)
(v_{\tau(i)}^2-v_{\tau(j)}^2) +(b_1^{ij}v_1+\cdots
+b_m^{ij}v_m)(c_{\tau(i)}^i-c_{\tau(i)}^j) (v_{\tau(i)}-v_{\tau(j)})
=$

\noindent $((c_{\tau(i)}^{i})^2-(c_{\tau(i)}^j)^2)v_{\tau(i)}^2 +
(c_{\tau(i)}^ic_{\tau(j)}^i-c_{\tau(i)}^jc_{\tau(j)}^j)
(v_{\tau(i)}v_{\tau(j)}+v_{\tau(j)}v_{\tau(i)})+
((c_{\tau(j)}^{i})^2-(c_{\tau(j)}^j)^2)v_{\tau(j)}^2$

Because the right-hand side is in the subspace generated by
$v_{\tau(i)}, v_{\tau(j)}$, the left-hand side is as well.
Therefore, only $b_{\tau(i)}^{ij}, b_{\tau(j)}^{ij}$ can be nonzero.

Let us write down what we know so far.  For any $i,j, \, 1\leq
i,j\leq n$, we have:

\noindent 1)
$\sigma(v_{i\,j})=a_{\tau(ij)}^{ij}v_{\tau(i)\tau(j)}+b_{\tau(i)}^{ij}v_{\tau(i)}+
b_{\tau(j)}^{ij}v_{\tau(j)}$

\noindent 2)
$\sigma(v_i)=c_{\tau(i)}^iv_{\tau(i)}+c_{\tau(j)}^iv_{\tau(j)}$

\noindent 3)
$\sigma({v_j})=c_{\tau(i)}^jv_{\tau(i)}+c_{\tau(j)}^jv_{\tau(j)}$

and

\noindent 4)
$(a_{\tau(ij)}^{ij}v_{\tau(i)\tau(j)}+b_{\tau(i)}^{ij}v_{\tau(i)}+
b_{\tau(j)}^{ij}v_{\tau(j)})
(c_{\tau(i)}^i-c_{\tau(i)}^j)(v_{\tau(i)}-v_{\tau(j)}) \equiv$

\noindent $a_{\tau(ij)}^{ij}(c_{\tau(i)}^i-c_{\tau(i)}^j)
(v_{\tau(i)}^2-v_{\tau(j)}^2) +(b_{\tau(i)}^{ij}v_{\tau(i)}+
b_{\tau(j)}^iv_{\tau(j)})(c_{\tau(i)}^i-c_{\tau(i)}^j)
(v_{\tau(i)}-v_{\tau(j)}) =$

\noindent $((c_{\tau(i)}^{i})^2-(c_{\tau(i)}^j)^2)v_{\tau(i)}^2 +
(c_{\tau(i)}^ic_{\tau(j)}^i-c_{\tau(i)}^jc_{\tau(j)}^j)
(v_{\tau(i)}v_{\tau(j)}+v_{\tau(j)}v_{\tau(i)})+
((c_{\tau(j)}^{i})^2-(c_{\tau(j)}^j)^2)v_{\tau(j)}^2$

Applying (1) and (3) above to $\{ik\}$ (we are using here that
$|V_1|>2$) we find that $\sigma(v_i)= c_{\tau(i)}^iv_{\tau(i)} +
c_{\tau(k)}^iv_{\tau(k)}$. Because $\sigma(v_i)\neq 0$,
$c_{\tau(i)}^i \text{ and } c_{\tau(j)}^i$ cannot both be zero.
Furthermore, $\tau(i)$, $\tau(j)$, and $\tau(k)$ are distinct, so
$c_{\tau(i)}^i\neq 0 \text{ and }c_{\tau(j)}^i = c_{\tau(k)}^i =0$.
Thus, $\sigma(v_i)= c_{\tau(i)}^iv_{\tau(i)}$.

Because we have
$c_{\tau(i)}^i-c_{\tau(i)}^j=-(c_{\tau(j)}^i-c_{\tau(j)}^j)$ and
$c_{\tau(j)}^i = 0 = c_{\tau(i)}^j$, we see that $c_{\tau(i)}^i$ is
independent of $i$; call this coefficient c.  Thus,

\noindent $a_{\tau(ij)}^{ij}c (v_{\tau(i)}^2-v_{\tau(j)}^2)
+(b_{\tau(i)}^{ij}v_{\tau(i)}+ b_{\tau(j)}^{ij}v_{\tau(j)})c
(v_{\tau(i)}-v_{\tau(j)}) = c^2(v_{\tau(i)}^2-v_{\tau(j)}^2)$

\noindent $\Rightarrow b_{\tau(i)}^{ij}=b_{\tau(j)}^{ij}=0 \text{
and } a_{\tau(ij)}^{ij}=c$ for all $\{ij\}$.

What $\sigma$ does on level one forces what happens on the levels
above.   We may compose $\sigma$ with the automorphism that
multiplies each element by $1/c$; call this composition
$\hat{\sigma}$.  We have shown that $\hat{\sigma}$ permutes the
vertices in levels 1 and 2.  Assume that $\hat{\sigma}$ permutes the
vertices in levels less than or equal to $k-1$;i.e.
$\hat{\sigma}(v_B)=v_{\tau(B)}$.  Let $v_A,v_{A_1},...,v_{A_{m_k}}
\in V_k$, $v_B,v_C,v_{B_1},...,v_{B_{m_{k-1}}} \in V_{k-1}$.  Each
vertex $v_A$ in level $k$ is present in at least one relation
$v_{A}(v_{B}- v_{C})- v_{B}^2 +v_{C}^2 + (v_{B}-v_{C}) v_{B\cap
C}=0$.  Apply $\hat{\sigma}$ to this relation and we get
$(a_{A_1}^Av_{A_1}+\cdots +a_{A_{m_k}}^Av_{A_{m_k}}+
b_{B_1}^Av_{B_1}+\cdots
+b_{B_{m_k-1}}^Av_{B_{m_{k-1}}})(v_{\tau(B)}-v_{\tau(C)})-v_{\tau(B)}^2+
v_{\tau(C)}^2+(v_{\tau(B)}-v_{\tau(C)})v_{\tau(B\cap C)}$ by the
induction hypothesis.  In order for this to equal 0, we must have
that $v_A$ goes to $v_{\tau(B)\cup \tau(C)} = v_{\tau(B\cup C)} =
v_{\tau(A)}$.  Thus, $\hat{\sigma}(v_A)=v_{\tau(A)}$ for all $v_A\in
V$, and so $\tau\in$Aut$(\Gamma)$.

Therefore, Aut$(A(\Gamma))=k^*\times \text{Aut}(\Gamma)$.\end{proof}

\begin{lemma}\label{L:autdn}Aut$(\Gamma_{D_n}) = D_n$.\end{lemma}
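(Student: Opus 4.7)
The plan is to read $\text{Aut}(\Gamma_{D_n})$ off the combinatorial structure of the Hasse graph, then match it against the natural action of $D_n$ on an $n$-gon.

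First I would observe that any layered-graph automorphism $\sigma$ must preserve levels, so it fixes the unique top vertex $u$ and the unique bottom vertex $*$, and induces a permutation on level~$1$ and a permutation on level~$2$. Every vertex $w_i$ and every vertex $v_{j\,j+1}$ appears in some edge (since $u$ is joined to every level-$2$ vertex, and every level-$2$ vertex is joined to two level-$1$ vertices, and every level-$1$ vertex to $*$), so the induced maps on each level are honest permutations.

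Next I would exploit the adjacency structure between levels $1$ and $2$. By construction, a vertex $v_{i\,i+1}$ is joined by edges precisely to $w_i$ and $w_{i+1}$, so each level-$2$ vertex has exactly two neighbors in level $1$, and two level-$1$ vertices $w_a, w_b$ share a common level-$2$ neighbor iff $|a-b| \equiv 1 \pmod{n}$. Thus the permutation $\tau$ induced by $\sigma$ on the set $\{w_1,\ldots,w_n\}$ must carry cyclically adjacent pairs to cyclically adjacent pairs; equivalently, $\tau$ is an automorphism of the cycle graph $C_n$, and $\text{Aut}(C_n)=D_n$. Once $\tau$ is fixed, the action on level~$2$ is forced: $\sigma(v_{i\,i+1})$ is the unique level-$2$ vertex adjacent to both $\sigma(w_i)=w_{\tau(i)}$ and $\sigma(w_{i+1})=w_{\tau(i+1)}$, namely $v_{\tau(i)\,\tau(i+1)}$ (well-defined because $\tau(i)$ and $\tau(i+1)$ are cyclically adjacent).

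For the reverse inclusion, the natural action of $D_n$ on the $n$-gon (rotations and reflections acting on vertices and edges of the polygon) induces an action on $\Gamma_{D_n}$: a rotation by $k$ sends $w_i\mapsto w_{i+k}$ and $v_{i\,i+1}\mapsto v_{i+k\,i+k+1}$, and a reflection acts analogously, in each case fixing $u$ and $*$. One verifies directly that edges are preserved, so $D_n\hookrightarrow \text{Aut}(\Gamma_{D_n})$. Combined with the previous paragraph, this gives $\text{Aut}(\Gamma_{D_n})=D_n$. The only mild subtlety is checking that the permutation $\tau$ on level~$1$ really does extend \emph{uniquely} to a graph automorphism, but the adjacency argument above shows this extension is both necessary and consistent, so there is no serious obstacle.
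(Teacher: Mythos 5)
Your proposal is correct and follows essentially the same route as the paper: both arguments reduce a graph automorphism to a permutation of the level-$1$ vertices that must preserve cyclic adjacency (equivalently, send the set of level-$2$ labels $\{(i\,,i+1)\}$ to itself), and then identify the resulting group as $D_n$. The only difference is cosmetic: you cite $\mathrm{Aut}(C_n)=D_n$, whereas the paper derives it on the spot by normalizing with the rotation $r$ and exhibiting the reflection $s$.
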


\begin{proof}Any automorphism of the graph must preserve the set of
vertices at each level and so acts on the set $\{w_1,...,w_n\}$ of
all n vertices in level 1.  We may say $\sigma(w_i)=w_{\sigma(i)}$
(slightly abusing the use of $\sigma$).  Thus we can think of an
automorphism of the graph as being a permutation in $S_n$ acting on
the subscripts/labels of the vertices of level 1.  This will
uniquely determine what happens on higher levels; i.e.
$\sigma(v_{ij})=v_{\sigma(i)\,\sigma(j)}$. Labeling the vertices in
level two by the vertices they are connected to in level one ensures
that as long as the set of vertices in each level is preserved, the
edges will be as well.

Recall that $V_2$ refers to the vertices in level two of the graph.
 Only permutations which send
the set $V_2 = \{(i\, i+1):1\leq i\leq n\}$ to itself are allowed.
Clearly $r=(12...n)$ fixes $V_2$.  We may replace $\sigma$ by
$r^i\sigma$ for some $i$ and assume $\sigma(1)=1$.  Then
$\sigma(12)$ is either $(12)$ or $(1n)$, which implies either
$\sigma(2)=2$ (and thus $\sigma=id$) or $\sigma(2)=n$.  In the
latter case $\sigma=(2n)(3\, n-1)(4 n-2)\cdots =s$. Thus $r$ and $s$
generate the automorphism group of $\Gamma_{D_n}$; this is the
dihedral group on n elements, $D_n$.  Note that these automorphisms
may be viewed as reflections and rotations of the n-gon.\end{proof}

\begin{thm}\label{T:autdn} a) If $n \geq 3$,
Aut($A(\Gamma_{D_n}))=k^*\times D_n$, k the base field

b) If $n=2$, $$Aut(A(\Gamma_{D_2}))\cong \{M \in GL(3,k):M= \left[
\begin{array}{ccc}
c_1^1+c_2^1&c_1^2-c_2^1&c_2^1-c_1^2\\
0&c_1^1&c_2^1\\0&c_1^2&c_1^1+c_2^1-c_1^2\\ \end{array} \right],
c_i^j \in k \,\forall i,j\}$$
\end{thm}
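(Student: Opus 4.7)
My plan for part (a) is to adapt the argument of Theorem~\ref{T:autgen} to $\Gamma_{D_n}$, which fails that theorem's hypotheses only at the unique top vertex $u$. The inclusion $k^*\times D_n\subseteq\mathrm{Aut}(A(\Gamma_{D_n}))$ is given by Lemma~\ref{L:grpaut} together with Lemma~\ref{L:autdn}. For the reverse, let $\sigma\in\mathrm{Aut}(A(\Gamma_{D_n}))$. By filtration preservation I would write $\sigma(w_i)=\sum_j c_j^i w_j$, $\sigma(v_{i,i+1})=\sum_j a_j^i v_{j,j+1}+(\text{lower order})$, and $\sigma(u)=Au+(\text{lower order})$. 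Applying $\sigma$ to the level-$2$ relation $v_{i,i+1}(w_i-w_{i+1})=w_i^2-w_{i+1}^2$ and expanding both sides in the basis of Proposition~\ref{P:diamond}, the only reducible subwords of the form $v_{j,j+1}w_k$ have $k=j+1$, so the other $v\cdot w$ monomials on the left are forced to cancel. This is exactly the setup that drives the proof of Theorem~\ref{T:autgen}: each $w_i$ appears in two level-$2$ relations (via $v_{i-1,i}$ and $v_{i,i+1}$), and when $n\ge3$ a third consecutive index is available, so the same linear algebra produces $\sigma(w_i)=c\,w_{\tau(i)}$ and $\sigma(v_{i,i+1})=c\,v_{\tau(i),\tau(i+1)}$ for some $c\in k^*$ and some $\tau\in S_n$ preserving consecutive pairs; by Lemma~\ref{L:autdn}, $\tau\in D_n$. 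To finish (a) I apply $\sigma$ to relation (2) and match degree-$4$ leading terms, which forces $\sigma(u)=cu$; composing with the scalar $1/c$ then delivers a pure graph automorphism.

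For part (b) the plan must address a genuinely new identity when $n=2$: summing the two level-$2$ relations yields $(v_{12}-v_{21})(w_1-w_2)=0$ in $A(\Gamma_{D_2})$, and this degeneracy is exactly what defeats the rigidity of (a). I would parametrize $\sigma$ directly. Writing $\sigma(w_j)=\sum_i c_i^j w_i$ and applying $\sigma$ to $v_{12}(w_1-w_2)=w_1^2-w_2^2$, the only way the degree-$3$ $v\cdot w$ part of the left-hand side can reduce to match the degree-$2$ right-hand side is
\[c_1^1+c_2^1=c_1^2+c_2^2,\]
so $c_2^2=c_1^1+c_2^1-c_1^2$ and exactly three parameters $c_1^1,c_2^1,c_1^2$ remain free, matching the parametrization in the statement. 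The coefficients of $\sigma(v_{12})$ and $\sigma(v_{21})$ are then obtained by solving the images of the level-$2$ relations, with the degeneracy $(v_{12}-v_{21})(w_1-w_2)=0$ supplying the mixing between $v_{12}$ and $v_{21}$; $\sigma(u)$ is subsequently determined by the level-$3$ relation. Expressing the resulting action in a basis whose first vector is a common eigenvector of $\sigma$ with eigenvalue $c_1^1+c_2^1$ (such as $w_1+w_2$, after suitable identification) then reproduces the matrix $M$ of the statement.

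The hardest step, in both parts, will be the bookkeeping of lower-order terms. For (a) one has to verify that the lower-order corrections to $\sigma(u)$ are absorbed after scaling by $1/c$, and that they cannot by themselves generate new automorphisms outside $k^*\times D_n$. For (b) one must confirm that every matrix $M$ of the prescribed form with nonzero determinant actually extends to an algebra automorphism of $A(\Gamma_{D_2})$ (not merely to a linear map on the generating span), and that the assignment $\sigma\mapsto M$ is a group isomorphism onto the described matrix group. Both of these are routine but technical direct calculations, and they are where the genuine content of the theorem resides.
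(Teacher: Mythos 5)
For part (a) your route is essentially the paper's: the paper simply verifies that $\Gamma_{D_n}$ ($n\ge 3$) satisfies all the hypotheses of Theorem \ref{T:autgen} and invokes it together with Lemma \ref{L:autdn}. Contrary to your opening claim, the hypotheses do \emph{not} fail at the top vertex $u$: there are exactly two paths from $u$ to each $w_i$ (through $v_{i-1,i}$ and $v_{i,i+1}$), so condition (iii) holds and no adaptation is needed. Your re-derivation of the argument is harmless but redundant.

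Part (b) contains a genuine misstep. The theorem's answer is a subgroup of $GL(3,k)$ acting on $\mathrm{span}\{v_{12},w_1,w_2\}$; that is, $\Gamma_{D_2}$ is taken to have a \emph{single} vertex on level two (the paper's proof says so explicitly). Your argument instead posits two distinct level-two vertices $v_{12}$ and $v_{21}$ and locates the loss of rigidity in the identity $(v_{12}-v_{21})(w_1-w_2)=0$ ``mixing'' them. That is the wrong mechanism and is incompatible with the stated answer: with two level-two generators the relevant generating space is four-dimensional and the automorphism group could not be the displayed $3\times 3$ matrix group. The actual source of the extra automorphisms, as in the paper, is the failure of condition (i) of Theorem \ref{T:autgen}: with only two vertices on level one there is no third index $k$ with which to run the step that kills the off-diagonal coefficients of $\sigma(w_i)$, so $\sigma(w_j)=c_1^jw_1+c_2^jw_2$ stays general. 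Substituting into the single level-two relation then yields $a+b_1=c_1^1+c_1^2$, $a+b_2=c_2^1+c_2^2$, $b_1=-b_2$, hence $a=c_1^1+c_2^1$, $b_1=c_1^2-c_2^1$, and the constraint $c_2^2=c_1^1+c_2^1-c_1^2$ --- which is exactly the matrix $M$. Your constraint $c_1^1+c_2^1=c_1^2+c_2^2$ is correct, but the top row of $M$ comes from solving for $a,b_1,b_2$ in this way, not from an eigenvector change of basis; in particular $w_1+w_2$ is not in general an eigenvector of $M$ (the paper's invariant lines are spanned by $v_{12}-w_1-w_2$ and $w_1-w_2$). Your closing remark that one should check that each such $M$ genuinely extends to an algebra automorphism is fair --- the paper does not carry that verification out either --- but as written your part (b) analyzes a different algebra from the one in the statement.
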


\begin{proof}a) By Lemma \ref{L:autdn},
Aut$(\Gamma_{D_n}) = D_n$.  It is clear by looking at the graph
$\Gamma_{D_n}$ (Figure \ref{Fi:dn}) that for $n>2$ $\Gamma_{D_n}$
satisfies the conditions of Theorem \ref{T:autgen}. Therefore,
Aut($A(\Gamma_{D_n}))=k^*\times D_n$.

b)  In this case, $\Gamma_{D_2}$ fails to satisfy condition (i) of
Theorem \ref{T:autgen}; there are only two vertices on level 1.
Consider the proof of Theorem \ref{T:autgen}.  The proof is valid up
until we apply (1) and (3) to $\{ik\}$ to find that $\sigma(v_i)=
c_{\tau(i)}^iv_{\tau(i)} + c_{\tau(k)}^iv_{\tau(k)}$. In the case
where $n=2$, $\tau(i)+1 = \tau(i-1)+2 = \tau(i-1) \text{ in
}\mathbb{Z}/(2)$, so $c_{\tau(i)+1}^i = c_{\tau(i-1)}^i$ can be
nonzero. Thus, $w_i$ can go to a sum of multiples of $w_1$ and
$w_2$; $\sigma(w_i) = c_1^iw_1 + c_2^iw_2$. Because we only have one
vertex in level two, $v_{12}$, it can only go to a multiple of
itself plus multiples of $w_1$ and $w_2$.  Thus we can drop the sub
and superscripts on $a$ and the superscripts on $b_i$:
$\sigma(v_{12}) = av_{12}+b_1w_1 + b_2w_2$. We can rewrite (4) in
the proof of Theorem \ref{T:autgen} as $a(c_1^1-c_1^2)(w_1^2-w_2^2)
+ (b_1w_1+b_2w_2)(c_1^1-c_1^2)(w_1-w_2) = ((c_1^1)^2-(c_1^2)^2)
w_1^2 + (c_1^1c_2^1-c_1^2c_2^2)(w_1w_2+w_2w_1) +
((c_2^1)^2-(c_2^2)^2) w_2^2$.  We can conclude from this that
$a+b_1=c_1^1+c_1^2$, $a+b_2=c_2^1+c_2^2$, and
$-b_1(c_1^1-c_1^2)=c_1^1c_2^1-c_1^2c_2^2=b_2(c_1^1-c_1^2)
\Rightarrow -b_1=b_2$ (else $c_1^1c_2^1=c_1^2c_2^2 \Rightarrow
c_1^1=c_1^2 \text{ and } c_2^1=c_2^2$, which is not possible). These
imply that $2a=c_1^1+c_2^1+c_1^2+c_2^2 \Rightarrow a=c_1^1+c_2^1
\Rightarrow b_1=c_1^2-c_2^1=-b_2$.

Write the element $rv_{12}+sw_1+tw_2$ as the vector
$[\begin{array}{ccc}r & s & t\\ \end{array}]$.  Then a way to
visualize what this automorphism group looks like is to consider the
invertible transformation matrix M that sends $[\begin{array}{ccc}r
& s & t\\ \end{array}] \mapsto [\begin{array}{ccc}r & s & t\\
\end{array}]*M$ $$M= \left[
\begin{array}{ccc}
c_1^1+c_2^1&c_1^2-c_2^1&c_2^1-c_1^2\\
0&c_1^1&c_2^1\\0&c_1^2&c_1^1+c_2^1-c_1^2\\ \end{array} \right]$$
This matrix is conjugate to a triangular matrix and thus stabilizes
a flag.  The spaces $M$ stabilizes can be found by solving
$[\begin{array}{ccc}r & s & t\\ \end{array}]*M =
\alpha[\begin{array}{ccc}r & s & t\\ \end{array}]$.  $M$ stabilizes
the one-dimensional spaces $k\left[\begin{array}{ccc}1&-1&-1\\
\end{array} \right]$ and $k\left[\begin{array}{ccc}0&1&-1\\
\end{array} \right].$
\end{proof}

Denote the lattice of subsets of $\{1,...,n\}$ by
$\mathcal{L}_{[n]}$.  In other words, $Q_n = A(\mathcal{L}_{[n]})$.

\begin{lemma}\label{L:autqn}If $n\geq 3$, $Aut(\mathcal{L}_{[n]}) =
S_n$.\end{lemma}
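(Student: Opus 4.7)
The plan is to prove the lemma by showing the two inclusions $S_n \subseteq \mathrm{Aut}(\mathcal{L}_{[n]})$ and $\mathrm{Aut}(\mathcal{L}_{[n]}) \subseteq S_n$ separately, the first by an explicit construction and the second by showing that a layered graph automorphism is completely determined by its action on the atoms.

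For the easy direction, I would observe that any $\tau \in S_n$ acts on subsets of $\{1,\ldots,n\}$ by $A \mapsto \tau(A) := \{\tau(i) : i \in A\}$, and this action preserves both cardinality (so levels) and containment (so edges, since $B \subset A$ with $|A|-|B|=1$ iff $\tau(B)\subset \tau(A)$ with $|\tau(A)|-|\tau(B)|=1$). Thus the map $v_A \mapsto v_{\tau(A)}$ is a layered graph automorphism, giving the inclusion $S_n \subseteq \mathrm{Aut}(\mathcal{L}_{[n]})$.

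For the reverse inclusion, let $\sigma \in \mathrm{Aut}(\mathcal{L}_{[n]})$. Since $\sigma$ must preserve each level, in particular it permutes the level-$1$ vertices $v_{\{1\}},\ldots,v_{\{n\}}$, determining a permutation $\tau \in S_n$ via $\sigma(v_{\{i\}}) = v_{\{\tau(i)\}}$. I would then argue by induction on $k$ that $\sigma(v_A) = v_{\tau(A)}$ for every $A$ with $|A| = k$. The key structural fact is that in the Hasse graph of the Boolean lattice, for any vertex $v_A$ at level $k$ the set of level-$1$ vertices that are descendants of $v_A$ (i.e., reachable via a directed path) is exactly $\{v_{\{i\}} : i \in A\}$, and conversely $A$ is uniquely recoverable from this set. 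Since $\sigma$ preserves directed paths and levels, $\sigma(v_A)$ is the unique level-$k$ vertex whose set of level-$1$ descendants is $\{v_{\{\tau(i)\}} : i \in A\} = \{v_{\{j\}} : j \in \tau(A)\}$, which is $v_{\tau(A)}$.

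The main obstacle, such as it is, lies in justifying that $A$ is uniquely determined by the atoms below $v_A$, and that two distinct level-$k$ vertices cannot have the same atom-downset; both are immediate from the definition of the Boolean lattice, where vertices at level $k$ are labeled precisely by the $k$-element subsets, and $v_{\{i\}} < v_A$ iff $i \in A$. Combining the two inclusions gives the equality $\mathrm{Aut}(\mathcal{L}_{[n]}) = S_n$ for all $n \geq 2$ (the hypothesis $n \geq 3$ is what will be needed later when applying Theorem \ref{T:autgen}, which requires $|V_1|>2$).
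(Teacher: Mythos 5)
Your proof is correct and follows essentially the same route as the paper: both arguments observe that a level-preserving graph automorphism is determined by its permutation $\tau$ of the level-one vertices because each $v_A$ is identified by the set of atoms it lies above, and that conversely every $\tau\in S_n$ extends to a graph automorphism since every subset of $\{1,\dots,n\}$ labels exactly one vertex. Your write-up is somewhat more explicit about the induction and the atom-downset argument, and your closing remark that the $n\geq 3$ hypothesis is only needed for the later application of Theorem \ref{T:autgen} is accurate.
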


\begin{proof}Any automorphism of the graph must preserve the set of vertices at
each level and so acts on the set $\{v_1,...,v_n\}$ of all n
vertices in level 1; so, we may say $\sigma(v_i)=v_{\sigma(i)}$
(slightly abusing the use of $\sigma$).  Thus we can think of an
automorphism of the graph as being a permutation in $S_n$ acting on
the subscripts/labels of the vertices of level 1.  This will
uniquely determine what happens on higher levels; i.e.
$\sigma(v_A)=v_{\sigma(A)}$.  Labeling the vertices in levels two
and higher by the vertices to which there is a path to in level one
ensures that as long as the set of vertices in each level is
preserved, the edges will be as well.  Since for each subset of
$\{1,...,n\}$ of cardinality $i$ level $i$ has a vertex labeled by
that subset, every element of $S_n$ is an automorphism of the graph.
In other words, for every $\tau \in S_n$, $\tau$ will permute the
vertices on each level.\end{proof}

\begin{thm}If $n\geq 3$, $Aut(Q_n)=k^* \times S_n$.\end{thm}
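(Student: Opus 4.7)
The plan is to deduce the theorem directly from Theorem \ref{T:autgen} combined with Lemma \ref{L:autqn}. The graph underlying $Q_n$ is the Hasse diagram of $\mathcal{L}_{[n]}$, whose vertices in level $i$ are indexed by the $i$-subsets of $\{1,\ldots,n\}$ and whose edges correspond to minimal containment, so this graph is exactly of the form to which Theorem \ref{T:autgen} applies. Thus it suffices to check hypotheses (i), (ii), and (iii) of that theorem for $n \geq 3$.

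First, for (i), the vertices in level one are $v_1, \ldots, v_n$, so $|V_1| = n \geq 3$. For (ii), distinct subsets of $\{1,\ldots,n\}$ give distinct labels $v_A$, so no two vertices share a label. The only substantive check is (iii): given vertices $v_A$ and $v_B$ two levels apart, I need to show there are either zero or exactly two paths between them. If $B \not\subseteq A$ (or $A \not\subseteq B$ in the relevant direction), then there is no chain $A \supset C \supset B$, so there are zero paths. If $B \subset A$ with $|A| - |B| = 2$, then paths $v_A \to v_C \to v_B$ correspond to choices of $C$ with $B \subset C \subset A$ and $|C| = |A|-1$; since $|A \setminus B| = 2$, there are exactly two such $C$, giving exactly two paths.

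Having verified the three hypotheses, Theorem \ref{T:autgen} yields $\mathrm{Aut}(A(\mathcal{L}_{[n]})) = k^* \times \mathrm{Aut}(\mathcal{L}_{[n]})$. Combining this with Lemma \ref{L:autqn}, which identifies $\mathrm{Aut}(\mathcal{L}_{[n]})$ with $S_n$ for $n \geq 3$, and recalling that $Q_n = A(\mathcal{L}_{[n]})$, we conclude $\mathrm{Aut}(Q_n) = k^* \times S_n$.

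Since all the heavy lifting is already done in Theorem \ref{T:autgen}, there is no real obstacle here; the only thing to be careful about is that the counting in (iii) is genuinely exact (i.e., exactly two paths, not one or three), but this is immediate from $|A \setminus B| = 2$. The proof should therefore be very short, essentially three sentences verifying the hypotheses followed by an application of the two previous results.
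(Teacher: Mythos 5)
Your proposal is correct and follows essentially the same route as the paper: verify hypotheses (i)--(iii) of Theorem \ref{T:autgen} for the lattice $\mathcal{L}_{[n]}$ (with the key point being that a vertex two levels below $v_A$ corresponds to removing two elements of $A$, which can be done via exactly two intermediate vertices), then combine with Lemma \ref{L:autqn}.
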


\begin{proof}We can see that $\mathcal{L}_{[n]}$ satisfies
conditions (i) and (ii) of Theorem \ref{T:autgen} since each subset
of $\{1,...,n\}$ occurs exactly once as a vertex in the lattice and
for $n>2$ there are more than 2 singleton subsets.  Condition (iii)
is satisfied because each vertex $v_B$ directly below a vertex $v_A$
is obtained by removing exactly one element from $A$.  Thus for any
vertex $v_C$ two levels below $v_A$, $|C|=|A|-2$.  Say
$C=A\backslash \{i,j\}$.  There are only two ways to obtain $C$:
first remove $i$ then $j$ or vice versa.  Therefore, there are only
two paths from $v_A$ to $v_C$.

Therefore, by Lemma \ref{L:autqn} and Theorem \ref{T:autgen},
Aut$(Q_n)=k^*\times S_n$.
\end{proof}

Consequently, in both of these algebras, the
Aut$(A(\Gamma))$-submodules of $A(\Gamma)_{[i]}$ are precisely the
Aut$(\Gamma)$-submodules.  Since Aut$(\Gamma)$ is finite we have
that $A(\Gamma)_{[i]}$ is a completely reducible
Aut$(A(\Gamma))$-module whenever characteristic $k=0$.

\section{Graded trace generating functions}\label{S:grtr}

Pass to the associated graded algebra, gr$A(\Gamma)$. Let $\phi_1,
..., \phi_l$ denote all of the distinct irreducible representations
of Aut$A(\Gamma)$ and let $\chi_j$ denote the character afforded by
$\phi_j$. Aut$A(\Gamma)$ acts on each $A(\Gamma)_{[i]}$, and so the
completely reducible Aut$A(\Gamma)$-module $A(\Gamma)_{[i]}$ may be
written as $\displaystyle \bigoplus_{j=1}^{l}m_{ij}\phi_j$.  The basis
$\mathcal{B}(\Gamma)$ of $A(\Gamma)$ is invariant under the
automorphism $\sigma$. Therefore, the trace of $\sigma$ on $gr
A(\Gamma)$, $Tr\sigma |_{A(\Gamma)}$, is the number of fixed basis
elements.

Remark: $Tr\sigma$ is the dimension of the subalgebra
$A(\Gamma^\sigma)$, which is not the same as the dimension of the
fixed point space.  The subalgebra $A(\Gamma^\sigma)$ described in
Section \ref{S:subalg} is the span of the set of fixed elements of
the basis.  On the other hand, the fixed point space is the span of
the sums of orbits of $\sigma$.  Averages over orbits are in the
fixed point space, but not in the subalgebra.

\bigskip

We will give two methods by which to find the graded trace
generating functions for general $A(\Gamma)$.  The first will be to
essentially count ``allowable'' and ``non-allowable'' words - a
generating function that gives the number of irreducible words in
each grading in the subalgebra $A(\Gamma^\sigma)$.  The second will
generalize Equation \ref{Eq:Hilbert} to use on the subgraph
$\Gamma^\sigma$ and subalgebra $A(\Gamma^\sigma)$.  These graded
trace generating functions will be used to find the multiplicities
of irreducible representations.

\subsection{Method 1 - Counting fixed words:}

The $Tr\sigma|_{A(\Gamma)_{[i]}}$ is the number of fixed basis
elements of degree i.  In other words, the number of sequences
$(x_1,k_1),...,(x_l,k_l)$ such that $1\leq k_j\leq|x_j|$,
$k_1+\cdots+k_l=i$, $e(x_i,k_1)\cdots e(x_l,k_l)$ is irreducible and
$\sigma x_j=x_j \,\forall j$.  Recall that $e(x_i,k)e(x_j,l)$ is
reducible if there is a path from $x_i$ to $x_j$ and the level of
$x_i$ equals the level of $x_j$ plus k.

\begin{lemma}\label{L:lattice}Let $X$ be a vector space with fixed
basis $\mathcal{B}$.  Let $Z = \{V\subseteq X: V \text{ subspace, }
V=\text{span}(V\cap \mathcal{B})\}$.  Then $Z(+,\cap)$ is a lattice
isomorphic to the lattice of $\mathcal{P(B)}(\cap,\cup)$.\end{lemma}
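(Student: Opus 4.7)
The plan is to exhibit the obvious candidate bijection, check it lands in $Z$, and verify that it converts union and intersection in $\mathcal{P}(\mathcal{B})$ to sum and intersection in $Z$.

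First I would define the two maps
\[
\Phi: \mathcal{P}(\mathcal{B}) \to Z, \quad S \mapsto \mathrm{span}(S), \qquad \Psi: Z \to \mathcal{P}(\mathcal{B}), \quad V \mapsto V \cap \mathcal{B},
\]
and check that $\Phi$ actually takes values in $Z$. The point is that $\mathrm{span}(S) \cap \mathcal{B} = S$: the inclusion $S \subseteq \mathrm{span}(S) \cap \mathcal{B}$ is automatic, while if $b \in \mathcal{B}$ lies in $\mathrm{span}(S)$, then writing $b$ as a $\mathcal{B}$-linear combination of elements of $S$ and comparing with the trivial expansion $b = b$ gives $b \in S$ by the uniqueness of basis expansions. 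Hence $\mathrm{span}(\mathrm{span}(S) \cap \mathcal{B}) = \mathrm{span}(S)$, confirming $\mathrm{span}(S) \in Z$. The identity $\Psi \circ \Phi = \mathrm{id}$ is exactly what I just proved, and $\Phi \circ \Psi = \mathrm{id}$ is the defining property of $Z$.

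Next I would show $\Phi$ is a lattice homomorphism. The equality $\Phi(S \cup T) = \Phi(S) + \Phi(T)$ is immediate from the definition of span. The harder direction is $\Phi(S \cap T) = \Phi(S) \cap \Phi(T)$: the inclusion $\subseteq$ is clear, and for $\supseteq$ I take $x \in \mathrm{span}(S) \cap \mathrm{span}(T)$, expand $x$ uniquely in the basis $\mathcal{B}$, and observe that the indices of nonzero coefficients must lie in $S$ (from the expansion witnessing $x \in \mathrm{span}(S)$) and also in $T$ (from the other), hence in $S \cap T$, so $x \in \mathrm{span}(S \cap T)$.

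The main obstacle, such as it is, is precisely this last step: everything hinges on the uniqueness of coordinates with respect to the fixed basis $\mathcal{B}$, which is what forces the subspaces in $Z$ to behave combinatorially like subsets of $\mathcal{B}$. Once this is in hand, $\Phi$ is an order-preserving bijection with order-preserving inverse that carries $\cup$ to $+$ and $\cap$ to $\cap$, which is the claimed lattice isomorphism.
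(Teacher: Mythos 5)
Your proof is correct and follows essentially the same route as the paper, whose entire argument is the one-line assertion that $V\mapsto V\cap\mathcal{B}$ is a lattice isomorphism. You have simply supplied the verification the paper omits (mutual inverseness via uniqueness of basis expansions, and compatibility of $+$ with $\cup$ and $\cap$ with $\cap$), all of which is sound.
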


\begin{proof} The map $\phi:Z\rightarrow \mathcal{P(B)}
\text{ defined by } \phi(V) = V \cap \mathcal{B}$ is a lattice
isomorphism.
\end{proof}

\noindent Remark: The lattice of subsets of $\mathcal{B}$ is
distributive, and so $Z$ is distributive.

\begin{thm}\label{T:word} Let $X = \sum X_i$ be a graded vector
space and let the basis $\mathcal{X}$ of $X$ consist of homogeneous
elements; $\mathcal{X} = \cup X_i$, where $\mathcal{X}_i=
\mathcal{X}\cap X_i$. Then $T(X)$ is bi-graded with $T(X)_{i,j} =
\text{span}\{x_{l_1}\cdots x_{l_i} : x_{l_k} \in \mathcal{X}_{l_k},
l_1+\cdots+l_i = j\}$.  Let $\mathcal{Y}$ be a finite set of quadratic
monomials in $\mathcal{X}$, and define $Y = <\mathcal{Y}> \subseteq
T(X)$.  Let $|\cdot|$ denote the bi-graded dimension of the space.
Then
$$|T(X)/Y| = \frac{1}{1-|X|+|Y|-|XY\cap YX|+|X^2Y\cap XYX\cap Y
X^2|-\cdots}.$$\end{thm}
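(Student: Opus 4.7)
The plan is to count a basis of $A := T(X)/Y$ directly via inclusion--exclusion on ``forbidden positions'' in a word, then package the result as a generating function. Since $Y$ is generated by monomials in $\mathcal{X}$, the intersection $Y \cap T(X)_{[n]}$ is spanned by those length-$n$ monomials that contain some $y \in \mathcal{Y}$ as a consecutive factor, so $A_n := A \cap T(X)_{[n]}$ has a basis of length-$n$ words that \emph{avoid} every $y \in \mathcal{Y}$. With $C_0 := k$, $C_1 := X$, and $C_j := X^{j-2}Y \cap X^{j-3}YX \cap \cdots \cap YX^{j-2}$ for $j \geq 2$, the space $C_j$ is spanned by length-$j$ chains $x_1 \cdots x_j$ with every $x_i x_{i+1} \in \mathcal{Y}$.

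For each $i \in \{1, \ldots, n-1\}$ let $B_i \subseteq T(X)_{[n]}$ be the subspace spanned by length-$n$ monomials whose letters at positions $i, i+1$ form an element of $\mathcal{Y}$. Because every $B_i$ is spanned by a subset of the fixed monomial basis of $T(X)_{[n]}$, set-theoretic inclusion--exclusion applies to their bi-graded dimensions:
\[
|A_n| \;=\; \sum_{I \subseteq \{1, \ldots, n-1\}} (-1)^{|I|}\, \Big|\bigcap_{i \in I} B_i\Big|.
\]
I would then partition $I$ into maximal runs of consecutive integers: a run of length $r$ pins $r+1$ consecutive letters to a chain in $C_{r+1}$, while letters outside any run are free. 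If $I$ has $m$ runs of sizes $(r_1, \ldots, r_m)$, then
\[
\Big|\bigcap_{i \in I} B_i\Big| \;=\; |X|^{\,n - |I| - m}\, \prod_{j=1}^m |C_{r_j + 1}|,
\]
and a stars-and-bars count shows there are $\binom{n - |I|}{m}$ subsets $I$ with any prescribed ordered run composition $(r_1, \ldots, r_m)$.

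Tracking tensor length with a formal variable $T$, I would form $H(T) := \sum_n |A_n|\, T^n$. Substituting $l = n - |I| - m$ for the number of free letters and using $\sum_{l \geq 0} \binom{l+m}{m}(|X|T)^l = (1 - |X|T)^{-m-1}$, the sum over compositions factors as $D(-T)^m$ with $D(t) := \sum_{r \geq 1} |C_{r+1}|\, t^r$, while the remaining sum over $m$ is geometric:
\[
H(T) \;=\; \frac{1}{\,1 - |X|T - T\, D(-T)\,}.
\]
Finally, $-T\, D(-T) = \sum_{j \geq 2} (-1)^j |C_j|\, T^j$, so the denominator equals $\sum_{j \geq 0} (-1)^j |C_j|\, T^j$; reabsorbing the bookkeeping variable $T$ into the bi-graded dimensions (each $C_j$ sits in tensor length $j$) yields the stated formula.

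The main obstacle is the combinatorial bookkeeping --- tracking the run decomposition of $I$, keeping signs straight, and handling the ``shift'' $T\, D(-T)$ correctly. Set-theoretic inclusion--exclusion on dimensions is legitimate here precisely because $\mathcal{Y}$ consists of monomials, so that every $B_i$ is spanned by a subset of a fixed monomial basis; this is the essential hypothesis that makes the argument go through.
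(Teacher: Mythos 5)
Your proof is correct, but it takes a genuinely different route from the paper. The paper identifies $A=T(X)/(Y)$ with the span of $\mathcal{Y}$-avoiding words and then constructs an explicit exact sequence of graded spaces
$\cdots \rightarrow AY^{(j)} \rightarrow AY^{(j-1)} \rightarrow \cdots \rightarrow AY \rightarrow AX \rightarrow A \rightarrow k \rightarrow 0$,
where $Y^{(i)}=\bigcap_{j=0}^{i}X^{j}YX^{i-j}$; exactness is proved using the distributivity of the lattice generated by the monomial subspaces $X^iYX^j$ (the paper's Lemma \ref{L:lattice}), and the formula then falls out as an Euler characteristic. You instead count the $\mathcal{Y}$-avoiding words of each tensor length directly by inclusion--exclusion over the set of ``bad'' adjacent positions, decompose each index set $I$ into maximal runs (each run of size $r$ pinning $r+1$ letters to a chain in $Y^{(r-1)}$), and resum the resulting composition sum into a geometric series. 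Both arguments hinge on exactly the same structural fact --- that $\mathcal{Y}$ consists of monomials in a fixed basis, so all the relevant subspaces are spanned by subsets of one monomial basis and intersections/dimensions behave set-theoretically --- but you use it to justify numerical inclusion--exclusion while the paper uses it to verify exactness of a complex. Your route is more elementary and self-contained; the paper's buys the actual (Koszul-type) resolution, not just its Euler characteristic. One small bookkeeping slip: the per-run factor is $(-1)^{r}|C_{r+1}|T^{r+1}$, so the sum over an $m$-run composition factors as $\bigl(T\,D(-T)\bigr)^{m}$ rather than $D(-T)^{m}$; your final denominator $1-|X|T-T\,D(-T)$ and the identity $-T\,D(-T)=\sum_{j\geq2}(-1)^{j}|C_j|T^{j}$ show you are in fact tracking the shift correctly, so this is only a typo in the intermediate statement, not a gap.
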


\begin{proof}
Denote $T(X)/Y$ by $A$ in this proof.  The graded dimension of
$T(X)$ is $\frac{1}{1-|X|}$. Because each generator in $Y$ is a
quadratic monomial, as vector spaces we can identify $A$ with the
subspace of $T(X)$ spanned by words not containing a subword in $Y$.

For $i\geq0$ define $Y^{(i)}:=\displaystyle \bigcap_{j=0}^{i}
X^{j}YX^{i-j}$; note that $Y^{(0)}=Y$.  It will also be convenient
to define $Y^{(-1)}:=X$ and $Y^{(-2)}:=k$.  For $i\geq-2$, let
$T_i:= T(X)Y^{(i)}/(T(X)YX^{i+2}T(X)\cap T(X)Y^{(i)})$.  Also,
define $T_{-3}:=T(X)/(T(X)XT(X))$.

Now define a map $\phi_i: T_i\rightarrow T_{i-1}$ for $i\geq-1$ by,
for $a\in T(X)Y^{(i)}$, \\$a+T(X)YX^{i+2}T(X)\cap T(X)Y^{(i)} \mapsto
a+T(X)YX^{i+1}T(X)\cap T(X)Y^{(i-1)}$.  Also, define $\phi_{-2}:
T_{-2}\rightarrow T_{-3}$ by $a+T(X)YT(X) \mapsto a+T(X)XT(X)$.
Because $Y^{(i)}=XY^{(i-1)}\cap YX^i \subseteq XY^{(i-1)}$,
$T(X)Y^{(i)}\subseteq T(X)Y^{(i-1)}$.  Also, $T(X)YX^{i+2}T(X)
\subseteq T(X)YX^{i+1}T(X)$ since
$X^{i+2}T(X)= \\X^{i+1}XT(X)\subseteq X^{i+1}T(X)$.  Thus, $\phi_i$ is
a well-defined map.

Next we will show that the sequence $\cdots \rightarrow T_j
\rightarrow T_{j-1}\rightarrow \cdots \rightarrow T_{-2} \rightarrow
T_{-3}\rightarrow 0$ is exact.  For $i\geq0$,
$\phi_{i-1}(\phi_i(a+T(X)YX^{i+2}T(X)\cap T(X)Y^{(i)})) = a +
T(X)YX^{i}T(X)\cap T(X)Y^{(i-2)}$.  Now $a\in T(X)Y^{(i)} \subseteq
T(X)Y^{(i-1)} \subseteq T(X)Y^{(i-2)}$.  Also, $a\in
T(X)Y^{(i)}=T(X)(YX^{i}\cap\cdots\cap X^{i}Y) \subseteq T(X)YX^i
\subseteq T(X)YX^iT(X)$. Thus, the image is $0$ and im$\phi_i
\subseteq$ ker$\phi_{i-1}$.

To show the other inclusion, note that ker$\phi_{i-1} = \{a+
T(X)YX^{i+1}T(X)\cap T(X)Y^{(i-1)}: \\a\in T(X)YX^{i}T(X) \cap
T(X)Y^{(i-2)}\}$.  For $a\in T(X)Y^{(i-1)}$ define $\bar{a} = a+
T(X)YX^{i+1}T(X)\cap T(X)Y^{(i-1)}$.  Assume that $\bar{a}\in
\text{ker}\phi_{i-1}$.  Then $a\in T(X)YX^iT(X)$. We want that $a\in
T(X)Y^{(i)}$.  To show this first observe that $a\in
T(X)Y^{(i-1)}\cap T(X)YX^iT(X) = T(X)Y^{(i-1)} \cap (T(X)YX^i +
T(X)YX^{i+1}T(X))$. Let $\mathcal{B}$ be the set of all monomials of
$\mathcal{X}$; $\mathcal{B}$ is a basis for $T(X)$ and
$\mathcal{Y}\subseteq \mathcal{B}$.  Consider the set of $X^iYX^j$.
Now $X^iYX^j$ is equal to the span of
$\mathcal{X}^i\mathcal{Y}\mathcal{X}^j$, and so Lemma
\ref{L:lattice} applies.  Therefore, the lattice generated by all
the $X^iYX^j$ is distributive.  Hence, we have that $T(X)Y^{(i-1)}
\cap (T(X)YX^i + T(X)YX^{i+1}T(X)) = T(X)Y^{(i-1)} \cap T(X)YX^i +
T(X)Y^{(i-1)} \cap T(X)YX^{i+1}T(X)$. Since $T(X)Y^{(i-1)} \cap
T(X)YX^i\subseteq T(X)Y^{(i)}$, $a\in T(X)Y^{(i)}$ and $\bar{a} \in
\text{im}\phi_i$. Therefore, im$\phi_i = \text{ker}\phi_{i-1}$ for
$i\geq 0$.

Since $\phi_{-1}$ is homogeneous in degree, im$\phi_{-1} = A\cap
T(X)X = \text{ker}\phi_{-2}$. Furthermore, the image of $\phi_{-2}$
is nonzero and maps to a one-dimensional space, and thus is
surjective.  Therefore, the sequence is exact.

Finally we will show that $T_i\cong AY^{(i)}$.  We will do this by
proving that $T(X)Y^{(i)} = AY^{(i)} \oplus T(X)YX^{i+2}T(X) \cap
T(X)Y^{(i)}$.  $T(X)Y^{(i)} = (A+T(X)YT(X))Y^{(i)} \subseteq
AY^{(i)} + T(X)YT(X)Y^{(i)} \subseteq AY^{(i)} + T(X)YX^{i+2}T(X)
\cap T(X)Y^{(i)}$.  Also, $AY^{(i)} \cap T(X)YX^{i+2}T(X)\subseteq
AX^{i+2} \cap T(X)YT(X)X^{i+2} \\\subseteq (A\cap T(X)YT(X))X^{i+2} =
(0)$.  Thus, our claim is proved and $T_i\cong AY^{(i)}$.  In
particular, $T_0= T(X)Y/(T(X)YX^2T(X)\cap T(X)Y) \cong AY$, \\$T_{-1}
= T(X)X/(T(X)YXT(X)\cap T(X)X)\cong AX$, \\$T_{-2} =
T(X)/T(X)YT(X)\cong A$, and \\$T_{-3} = T(X)/T(X)XT(X) \cong k$.
Hence, we can write our exact sequence as \\$\cdots \rightarrow
AY^{(j)}\rightarrow AY^{(j-1)} \rightarrow \cdots \rightarrow AY
\rightarrow AX \rightarrow A \rightarrow k \rightarrow 0$.

\noindent Therefore, $1=|k|= \sum(-1)^i|T_i| = |A| - |A||X| +\sum_{i\geq
0}(-1)^i|A||Y^{(i)}| = |A|(1-|X|+\sum_{i\geq 0}(-1)^i|Y^{(i)}|)$
implies that \[|A| = \frac{1}{1-|X|+\sum_{i\geq0}(-1)^i|Y^{(i)}|}\qedhere\]
\end{proof}

Define another grading on $A(\Gamma)$ by $A(\Gamma)_{[[k]]} =
\text{span}\{e(v_1,i_1)\cdots e(v_k,i_k): (v_j,i_j)\not\gtrdot
(v_{j+1},i_{j+1})\}$.  This induces an increasing filtration on
$A(\Gamma)$, $A(\Gamma)_{((k))} = \text{span}\{e(v_1,i_1)\cdots
e(v_j,i_j):j\leq k, \\(v_j,i_j)\not\gtrdot (v_{j+1},i_{j+1})\}$.
Thus as a vector space we can identify $A(\Gamma)$ with its
associated graded algebra, $gr' A(\Gamma)$.

\begin{lemma}\label{L:newgr}Define $W(\Gamma^\sigma) =
\text{span}\{e(v,k): l\geq 0, v\in V_\sigma, 1\leq k\leq |v|\}$ and
$R(\Gamma^\sigma)$ the two-sided ideal in $T(W(\Gamma^\sigma))$ generated by $\tilde{R}(\Gamma^\sigma) = \text{span}\{e(v,k)e(u,l): v>u \in
V_\sigma, k=|v|-|u|\}$. Then $A(\Gamma^\sigma) =
T(W(\Gamma^\sigma))/R(\Gamma^\sigma)$ as a subalgebra of
gr$'A(\Gamma)$.
\end{lemma}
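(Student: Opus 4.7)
My plan is to pass to the associated graded of the presentation $\ags \cong T(G')/\langle R'\rangle$ from the previous theorem (with $G' = \{e(v,k):v\in V_\sigma,\ 1\leq k\leq|v|\} = W(\Gamma^\sigma)$ and $R' = \{e(v,k+l)-e(v,k)e(u,l):v\gtrdot u\in V_\sigma\}$) under the tensor-degree filtration on $T(G')$, viewing each generator as having degree $1$ so that Lemma~2.1 applies. The leading term of each relation is immediate: $e(v,k)e(u,l)$ has tensor degree $2$ while $e(v,k+l)$ has tensor degree $1$, so the degree-$2$ component of $e(v,k+l)-e(v,k)e(u,l)$ is $-e(v,k)e(u,l)$. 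The indexing condition $v\gtrdot u\in V_\sigma$ unwinds to $v,u\in V_\sigma$, $v>u$, $k=|v|-|u|$, which is exactly the defining condition of $\tilde R(\Gamma^\sigma)$. Hence $gr\langle R'\rangle = R(\Gamma^\sigma)$, and Lemma~2.1 yields
\[
gr\bigl(T(G')/\langle R'\rangle\bigr) \;\cong\; T(W(\Gamma^\sigma))/R(\Gamma^\sigma).
\]

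Next I would match this filtration, transported through the presentation, with the restriction of the $[[\cdot]]$-filtration of $A(\Gamma)$ to $\ags$. The only relation in $R'$ that changes $[[\cdot]]$-length rewrites $e(v,k)e(u,l) \mapsto e(v,k+l)$, decreasing length by one; no reduction increases length. So the image in $\ags$ of the tensor-degree-$\leq k$ subspace is exactly $\ags \cap A(\Gamma)_{((k))}$, the span of non-covering $V_\sigma$-monomials of length $\leq k$. At the vector-space level, $gr\,A(\Gamma^\sigma)$ is identified with $\ags$ itself through the shared non-covering basis.

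Finally I would compare multiplications. For generators $e(v,k), e(u,l)$, the product in $gr'A(\Gamma)$ is the concatenation if $(v,k)\not\gtrdot(u,l)$, and zero if $(v,k)\gtrdot(u,l)$: in the covering case the $A(\Gamma)$-identity $e(v,k)e(u,l) = e(v,k+l) + \sum\pm e(v,i_0)e(u,i_1)\cdots e(u,i_{r+1})$ places the concatenation in $A(\Gamma)_{((1))}$, killing its degree-$2$ component in $gr'A(\Gamma)$. This is the same rule as in $T(W(\Gamma^\sigma))/R(\Gamma^\sigma)$, so the isomorphism from the first step transports to the desired identification of $\ags$ with $T(W(\Gamma^\sigma))/R(\Gamma^\sigma)$ as a subalgebra of $gr'A(\Gamma)$. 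The main technical hurdle is confirming that every correction term on the right reduces to a length-$\leq 1$ basis element; one can do this by direct monomial reduction in $A(\Gamma)$, but it is cleaner to observe that the entire right-hand side is the image of a tensor-degree-$1$ element of $T(G')$ modulo $\langle R'\rangle$, so by the filtration match of the previous paragraph every term automatically lies in $A(\Gamma)_{((1))}$.
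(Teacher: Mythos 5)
Your strategy---realizing $T(W(\Gamma^\sigma))/R(\Gamma^\sigma)$ as the tensor-degree associated graded of the presentation $T(G')/\langle R'\rangle\cong A(\Gamma^\sigma)$---is a sensible way to put content behind what the paper disposes of in one sentence (``this follows from the definitions''), and the identification of leading terms of $R'$ with $\tilde R(\Gamma^\sigma)$ is right. But the last paragraph contains a genuine error. It is \emph{not} true that for a covering pair $(v,k)\gtrdot(u,l)$ the product $e(v,k)e(u,l)$ lies in $A(\Gamma)_{((1))}$, and ``direct monomial reduction in $A(\Gamma)$'' will not show it: in the identity $(*)$ the correction terms $e(v,i_0)e(u,i_1)\cdots e(u,i_{r+1})$ have $i_0<k=|v|-|u|$ and a repeated vertex $u$, so every adjacent pair in them is already non-covering---they \emph{are} basis monomials, of word length $r+2\geq 2$ when $i_0\geq 1$, and they do not reduce further. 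Concretely, in $A(\Gamma_{D_n})$ one has $(u,1)\gtrdot(v_{23},1)$ and $e(u,1)e(v_{23},1)=e(u,2)+e(v_{12},2)-e(v_{12},1)^2+e(v_{23},1)^2-e(w_1,1)^2+e(w_2,1)^2-e(v_{23},1)e(w_2,1)$, which has a nonzero length-two component; so this product is not in $A(\Gamma)_{((1))}$, and in the word-length associated graded of $A(\Gamma)$ itself the covering products are not zero (nor does the span of $V_\sigma$-monomials even close under multiplication there). What is true---and what your ``cleaner'' observation actually proves, since $\phi'$ lands in $A(\Gamma^\sigma)\subseteq \mathrm{gr}A(\Gamma)$ rather than in $A(\Gamma)$---is that in $\mathrm{gr}A(\Gamma)$ the correction terms vanish (they have strictly lower degree in the original level grading) and $e(v,k)e(u,l)=e(v,k+l)$ on the nose. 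Since $A(\Gamma^\sigma)$ is by definition a subalgebra of $\mathrm{gr}A(\Gamma)$, the word-length filtration and the $\mathrm{gr}'$ of the lemma must be taken there; run your entire comparison inside $\mathrm{gr}A(\Gamma)$ and your ``main technical hurdle'' disappears, because a covering product drops word length exactly, with no correction terms to control.

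A secondary gap: from $\mathrm{gr}$ of each generator of $R'$ being $-e(v,k)e(u,l)$ you may only conclude $\langle \mathrm{gr}\,R'\rangle\subseteq \mathrm{gr}\langle R'\rangle$; the reverse inclusion (no cancellation of leading terms within the ideal produces new, smaller leading terms) is precisely the nontrivial content of ``$\mathrm{gr}\langle R'\rangle = R(\Gamma^\sigma)$'' and needs a word. The quickest fix is a dimension count: $T(G')/\langle R'\rangle$ has the non-covering monomials as a basis by the paper's subalgebra theorem, $T(W(\Gamma^\sigma))/R(\Gamma^\sigma)$ has the same monomials as a basis because its relations are monomial, and the containment of ideals then forces equality in each bidegree. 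With these two repairs your argument goes through; the paper itself offers no proof to compare against, so filling in these steps is exactly what is needed.
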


\begin{proof}This follows from the definitions of $A(\Gamma^\sigma)$ and
of $gr'A(\Gamma)$.\end{proof}

\noindent $W(\Gamma^\sigma)$ and $R(\Gamma^\sigma)$ satisfy the
hypotheses for Theorem \ref{T:word}.  Therefore,
$$|A(\Gamma^\sigma)| = \frac{1}{1-|W(\Gamma^\sigma)|+|\tilde{R}(\Gamma^\sigma)|-|\tilde{R}(\Gamma^\sigma)W(\Gamma^\sigma)\cap
W(\Gamma^\sigma) \tilde{R}(\Gamma^\sigma)|+\cdots}.$$

\subsection{Method 2 - Generalizing the Hilbert Series Equation
(\ref{Eq:Hilbert}):}

We would like to apply the function $H(A(\Gamma),t)$ (see Equation
(\ref{Eq:Hilbert})) to the subalgebras created by our fixed points.
Take the subgraph of $\Gamma$ consisting of the points fixed by an
automorphism $\sigma$.  This generates a subalgebra of gr$A(\Gamma)$
in the way described in Section \ref{S:subalg}. Thus, we are using
Equation (\ref{Eq:Hilbert}) with the additional condition that the
vertices in the sum are fixed by $\sigma \in Aut(A(\Gamma))$; call
this modified formula $Tr_\sigma(A(\Gamma),t).$

\begin{thm}\label{T:trace}Let $\Gamma$ be a layered graph with
unique minimal element * of level 0 and $\sigma$ an automorphism of
the graph.  Let $\Gamma^\sigma$ be the subgraph of $\Gamma$ with
vertices being those fixed by $\sigma$ (as described in Section
\ref{S:subalg}).  Denote the Hilbert series of the subalgebra
$A(\Gamma^\sigma)$, which is the graded trace function of $\sigma$
acting on $A(\Gamma)$, by $Tr_\sigma(A(\Gamma),t)$ (or
$Tr_\sigma(t)$ when $A(\Gamma)$ is clear).  Then
\begin{equation}\label{Eq:trace}
Tr_\sigma(A(\Gamma),t)=\frac{1-t}{1-t\displaystyle
\sum_{\substack{v_1>\cdots>v_l\geq
*\\v_1,...,v_l \in V_\sigma}}(-1)^{l-1} t^{|v_1|-|v_l|}}.\end{equation}
\end{thm}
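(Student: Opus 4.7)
The plan is to reduce the statement to computing the Hilbert series of the subalgebra $A(\Gamma^\sigma)$ and then to apply Theorem \ref{T:word} via Lemma \ref{L:newgr}. Since $\sigma$ permutes the basis $\mathcal{B}(\Gamma)$ of $gr A(\Gamma)$, the trace $Tr_\sigma(A(\Gamma)_{[i]})$ is simply the number of degree-$i$ basis monomials fixed by $\sigma$; by construction these are precisely the degree-$i$ basis monomials of $A(\Gamma^\sigma)$. Hence $Tr_\sigma(A(\Gamma), t) = H(A(\Gamma^\sigma), t)$. Lemma \ref{L:newgr} presents $A(\Gamma^\sigma) \cong T(W(\Gamma^\sigma))/(R(\Gamma^\sigma))$ with $R(\Gamma^\sigma)$ generated by the quadratic monomial space $\tilde R(\Gamma^\sigma)$, so Theorem \ref{T:word} yields
\[
H(A(\Gamma^\sigma), t) \;=\; \frac{1}{1 - |W(\Gamma^\sigma)| + \sum_{i\geq 0}(-1)^i |Y^{(i)}|},
\]
where $Y^{(i)} = \bigcap_{j=0}^{i} W(\Gamma^\sigma)^{j}\,\tilde R(\Gamma^\sigma)\,W(\Gamma^\sigma)^{i-j}$.

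Next I would unpack $|Y^{(i)}|$ combinatorially. A monomial $e(v_0,k_0)\cdots e(v_{i+1},k_{i+1})$ lies in $W^{j}\tilde R W^{i-j}$ for every $j$ iff each consecutive pair $(v_j,k_j),(v_{j+1},k_{j+1})$ with $0\leq j\leq i$ satisfies $v_j > v_{j+1}$ in $V_\sigma$ and $k_j = |v_j|-|v_{j+1}|$. So such a monomial is specified by a strict chain $v_0 > v_1 > \cdots > v_{i+1}$ in $V_\sigma\setminus\{*\}$ together with one free index $k_{i+1}\in\{1,\dots,|v_{i+1}|\}$, and its $t$-weight is $t^{|v_0|-|v_{i+1}|+k_{i+1}}$ by telescoping. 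Summing over $k_{i+1}$ gives
\[
|Y^{(i)}| \;=\; \sum_{\substack{v_0>\cdots>v_{i+1}\\ v_j\in V_\sigma\setminus\{*\}}} t^{|v_0|-|v_{i+1}|}\bigl(t+t^2+\cdots+t^{|v_{i+1}|}\bigr),
\]
and analogously $|W(\Gamma^\sigma)| = \sum_{v\in V_\sigma\setminus\{*\}}(t+\cdots+t^{|v|})$.

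To conclude I would verify the algebraic identity
\[
(1-t)\Bigl(1-|W(\Gamma^\sigma)|+\sum_{i\geq 0}(-1)^i|Y^{(i)}|\Bigr) \;=\; 1 - t\sum_{\substack{v_1>\cdots>v_l\geq *\\ v_j\in V_\sigma}}(-1)^{l-1}t^{|v_1|-|v_l|}.
\]
Using $(1-t)(t+\cdots+t^{|v|}) = t - t^{|v|+1}$, each $(1-t)|Y^{(i)}|$ splits into $t\sum t^{|v_0|-|v_{i+1}|}$ over chains of $i+2$ vertices in $V_\sigma\setminus\{*\}$, and $-t\sum t^{|v_0|}$ over the same chains, which under the bijection ``append $*$'' matches $-t$ times the sum over chains of $i+3$ vertices in $V_\sigma$ ending at $*$. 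Alternating over $i$, the ``ending-at-$*$'' contribution at index $i$ cancels the ``not-ending-at-$*$'' contribution at index $i+1$ after reindexing, leaving only the residual term from the length-one chain $\{*\}$; this produces the $+t$ that cancels the $-t$ from $(1-t)\cdot 1$. What remains reassembles as $t\sum_{l\geq 1}(-1)^l \tilde C_l$ where $\tilde C_l = \sum_{v_1>\cdots>v_l\geq *}t^{|v_1|-|v_l|}$, which is exactly the required right-hand side. I expect this final telescoping-and-signs bookkeeping to be the main obstacle; the underlying bijection (appending $*$ to a chain that does not terminate at $*$) is elementary, but indices and signs must be tracked carefully so that the double sum collapses onto the single alternating chain sum in the target formula.
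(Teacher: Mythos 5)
Your reduction $Tr_\sigma(A(\Gamma),t)=H(A(\Gamma^\sigma),t)$ is the same starting point as the paper's, but from there you take a genuinely different route, and it works. The paper proves the formula by decomposing the basis $\mathcal{B}_\sigma$ according to its leading factor $e(v,k)$, obtaining the recursion $Tr_v(t)=t\bigl(\tfrac{t^{|v|}-1}{t-1}\bigr)Tr(t)-\sum_{v>w>*}t^{|v|-|w|}Tr_w(t)$, writing this in matrix form and inverting the $t$-deformed zeta matrix $\zeta(t)$; the alternating chain sum in the denominator of (\ref{Eq:trace}) arises as the entries of $\zeta^{-1}(t)=I-(\zeta(t)-I)+(\zeta(t)-I)^2-\cdots$, a $t$-analogue of M\"obius inversion. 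You instead route everything through Theorem \ref{T:word} via Lemma \ref{L:newgr} --- the paper's ``Method 1,'' which the paper asserts is applicable here but does not use for this proof --- identify $Y^{(i)}$ with chains of length $i+2$ in $(V_\sigma)_+$ carrying one free exponent $k_{i+1}$, and then check an algebraic identity. I verified that identity: writing $C_l'$ (resp.\ $C_l''$) for the weighted count of length-$l$ chains avoiding (resp.\ ending at) $*$, the terms $t\,t^{|v_0|-|v_{i+1}|}$ contribute $t(-1)^lC_l'$ and the terms $-t\,t^{|v_0|}$ contribute, after appending $*$, $t(-1)^lC_l''$; so the two families carry the \emph{same} sign and merge into $t(-1)^l C_l$ with $C_l=C_l'+C_l''$, rather than cancelling as your sketch says, and the leftover $-t$ from $(1-t)\cdot 1$ is absorbed as the $l=1$ term $t(-1)^1C_1''$ coming from the chain $\{*\}$. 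That is a wording slip in the bookkeeping you yourself flagged as delicate, not a gap: the final reassembly into $1+t\sum_{l\geq1}(-1)^lC_l$ is correct. What each approach buys: yours reuses the homological/exact-sequence machinery of Theorem \ref{T:word} (so distributivity of the monomial lattice does the heavy lifting), while the paper's is self-contained, more elementary, and directly parallels the proof of the Hilbert series formula in \cite{RSW}.
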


\begin{proof}
Write $Tr(t)$ for $Tr_\sigma(A(\Gamma),t)$ in this proof.  Let
$v_1,...,v_l, v, w\in V_\sigma$.  Recall that the basis for
$A(\Gamma^\sigma)$ is $\mathcal{B}_\sigma = \{e(v_1,k_1)\cdots e(v_l,k_l): v_1,...v_l\in V_\sigma, 1\leq k_i\leq |v_i|, e(v_i,k_i)\mbox{$\not\gtrdot$}
e(v_{i+1},k_{i+1})\}$.

For $v\in (V_\sigma)_+$, define $C_v =
\displaystyle\bigcup_{k=1}^{|v|} e(v,k)\mathcal{B}_\sigma, \, B_v =
C_v\cap \mathcal{B}_\sigma, \, D_v = C_v\backslash B_v$.  Then
$\mathcal{B}_\sigma = \{*\}\cup \displaystyle \bigcup_{v\in
(V_\sigma)_+}B_v$.  Let $Tr_v = Tr_\sigma(B_v,t)$, the graded
dimension of the span of $B_v$.  Then $Tr(t) = 1+\displaystyle
\sum_{v\in (V_\sigma)_+}Tr_v(t)$.  We also have $Tr_\sigma(C_v,t) =
(t+\cdots+t^{|v|})Tr(t) = t(\frac{t^{|v|}-1}{t-1})Tr(t)$ and,
because $D_v = \displaystyle \bigcup_{v>w>*}e(v,|v|-|w|)B_w$,
$Tr_\sigma(D_v,t) = \displaystyle \sum_{v>w>*}t^{|v|-|w|}Tr_w(t)$.
Thus,
\begin{equation}\label{Eq:trv}Tr_v(t) =
t\left(\frac{t^{|v|}-1}{t-1}\right)Tr(t) - \displaystyle
\sum_{v>w>*}t^{|v|-|w|}Tr_w(t).\end{equation}

This equation may be written in matrix form.  Put an order on
$V_\sigma$, arrange the elements in decreasing order, and index the
elements of vectors and matrices by this ordered set.  Let
$\vec{Tr}(t)$ denote the column vector with $Tr_v(t)$ in the
$v$-position and 0 in the *-position.  Let $\vec{s}$ denote the
column vector with entry $t^{|v|}-1$ in the $v$-position, let
$\vec{1}$ denote the column vector having 1 as each entry, and let
$\zeta(t)$ denote the matrix with the entry in the $(v,w)$-position
being $t^{|v|-|w|}$ if $v\geq w$ and 0 otherwise. Then rewriting
Equation \ref{Eq:trv} gives $\zeta(t)\vec{Tr}(t) =
\frac{t}{t-1}\vec{s}Tr(t)$.

Now $\zeta(t)-I$ is a strictly upper triangular matrix, and so
$\zeta(t)$ is invertible; $\zeta^{-1}(t) =
I-(\zeta(t)-I)+(\zeta(t)-I)^2-\cdots$.  Thus the $(v,w)$-entry of
$\zeta^{-1}(t)$ is $\displaystyle
\sum_{\substack{v=v_1>\cdots>v_l=w\geq
*\\v_1,...,v_l \in V_\sigma}}(-1)^{l+1}t^{|v_1|-|v_l|}$.

Thus we can multiply $\zeta(t)\vec{Tr}(t) =
\frac{t}{t-1}\vec{s}Tr(t)$ by $\zeta^{-1}$ and then multiply by
$\vec{1}^T$ to obtain $\vec{1}^T\vec{Tr} = Tr(t)-1 =
\frac{t}{t-1}\vec{1}^T\zeta^{-1}(t)\vec{s}Tr(t)$. Solving for
$Tr(t)$ we get
\[Tr(t) = \frac{t-1}{t-1-t\vec{1}^T\zeta^{-1}(t)\vec{s}} = \frac{1-t}{1-t\vec{1}^T\zeta^{-1}(t)\vec{1}} =
\frac{1-t}{1-t\displaystyle\sum_{\substack{v_1>\cdots>v_l\geq
*\\v_1,...,v_l \in V_\sigma}}(-1)^{l-1}t^{|v_1|-|v_l|}}.\]
\end{proof}

\noindent Remark 1: $\zeta$ is the standard zeta matrix and
$\zeta^{-1}$ is the M$\ddot{o}$bius matrix when $t=1$.

\noindent Remark 2: We will normally apply this method; although,
both methods can theoretically be applied in all layered graph
algebras.

In general, if $\sigma$ and $\tau$ are conjugate, $\Gamma^\sigma$
and $\Gamma^\tau$ are isomorphic (by the conjugation acting on the
subscripts of vertices). Thus, it is enough to find the graded trace
functions for any one representative of each conjugacy class.

\section{Graded Trace Generating Functions for $A(\Gamma_{D_n})$ and
$Q_n$}\label{S:genfn}

Let us calculate the generating functions for our algebras.  First
of all, the graded trace of the identity acting on the algebra is
the graded dimension of the algebra.  We will derive it using the
theorems above to show that we get the same result as the Hilbert
series given earlier.

\bigskip

\subsection{The Algebra $A(\Gamma_{D_n})$}

\medskip

We will now find $Tr_\sigma(A(\Gamma_{D_n}),t) =
\frac{1}{1-(a_1t+a_2t^2+a_3t^3)}$ using Method 1 in Section
\ref{S:grtr}. Notice that because $\Gamma_{D_n}$ has three levels,
$\tilde{R}(\Gamma_{D_n}^{id})^{(i)} = \{0\}$ for $i\geq 2$.
$W(\Gamma_{D_n}^{id})$ has basis \\$\{e(u,3), e(u,2), e(u,1),
e(v_{i\,i+1},2), e(v_{i\,i+1},1), e(w_i,1), \, 1\leq i \leq n\}$.
Hence $|W(\Gamma_{D_n}^{id})| = (2n+1)t+ (n+1)t^2 + t^3$. The
reducible words of degree 2 are $e(u,2)e(w_i,1)$,
$e(u,1)e(v_{i\,i+1},2)$, $e(u,1)e(v_{i\,i+1},1)$,
$e(v_{i\,i+1},1)e(w_i,1)$, $e(v_{i\,i+1},1)e(w_{i+1},1)$, $1\leq
i\leq n$. The set of these words is a basis for
$\tilde{R}(\Gamma_{D_n}^{id})$, so we have
$|\tilde{R}(\Gamma_{D_n}^{id})| = 3nt^2+2nt^3$. The overlaps of
reducible words are $\{e(u,1)e(v_{i\,i+1},1)e(w_i,1),
\\e(u,1)e(v_{i\,i+1},1)e(w_{i+1}):1\leq i\leq n\}$. This set is a
basis for $\tilde{R}(\Gamma_{D_n}^{id})W(\Gamma_{D_n}^{id})\cap
W(\Gamma_{D_n}^{id}) \tilde{R}(\Gamma_{D_n}^{id})$ and so
$|\tilde{R}(\Gamma_{D_n}^{id})W(\Gamma_{D_n}^{id})\cap
W(\Gamma_{D_n}^{id}) \tilde{R}(\Gamma_{D_n}^{id})| = 2nt^3$.

\noindent Hence $a_3=1-(n+n)+(n+n)=1$, $a_2=(1+n)-(n+n+n)=1-2n$, and
$a_1=1+n+n=2n+1$.

\noindent Thus, we have $$Tr_{id}(A(\Gamma_{D_n}),t) =
\frac{1}{1-((2n+1)t-(2n-1)t^2+t^3)},$$ which agrees with the earlier
result.

\smallskip

Now since only u is fixed by $r^i$, $W(\Gamma_{D_n}^{r^i}) =
\text{span}\{e(u,3), e(u,2), e(u,1)\} \text{ and } \tilde{R}(\Gamma_{D_n}^{r^i})
= \{0\}$. Hence,
$$Tr_{r^i}(A(\Gamma_{D_n}),t) = \frac{1}{1-(t+t^2+t^3)}.$$

\noindent If $n$ is even, $W(\Gamma_{D_n}^s)$ has basis $\{e(u,3),
e(u,2), e(u,1), e(v_{12},2), e(v_{12},1), e(v_{n/2+1\,n/2+2},2),
\\e(v_{n/2+1\,n/2+2},1)\}$ and $\tilde{R}(\Gamma_{D_n}^s)$ has basis
$\{e(u,1)e(v_{i\,i+1},2), \, e(u,1)e(v_{i\,i+1},1):  i=1,n/2+1\}$.
Also, $W(\Gamma_{D_n}^{rs})$ has basis $\{e(u,3), e(u,2), e(u,1),
e(w_2,1), e(w_{n/2+2},1)\}$  and $\tilde{R}(\Gamma_{D_n}^{rs})$ has basis
\\$\{e(u,2)e(w_2,1), \, e(u,2)e(w_{n/2+2},1)\}$.  If $n$ is odd,
$W(\Gamma_{D_n}^{s})$ has basis \\$\{e(u,3), e(u,2), e(u,1),
e(v_{12},2), e(v_{12},1), e(w_{(n+3)/2},1)\}$ and
$\tilde{R}(\Gamma_{D_n}^{s})$ has basis \\$\{e(u,1)e(v_{12},2), \,
e(u,1)e(v_{12},2),\, e(u,2)e(w_{(n+3)/2},1)\}$.  Computing each separately we see that
$$Tr_{s}(A(\Gamma_{D_n}),t) = Tr_{rs}(A(\Gamma_{D_n}),t) = \frac{1}{1-(3t+t^2-t^3)}.$$

Using Method 2 we can get our graded trace generating functions by
applying Equation (\ref{Eq:trace}) to the subalgebras of
$A(\Gamma_{D_n})$. The automorphism $r^i$ only fixes u and the
minimal vertex (see Figure \ref{Fi:dnr}). Since we have two
vertices, no vertices one or two levels apart, and one pair of
vertices three levels apart (and only one path between them),
$$Tr_{r^i}=\frac{1-t}{1-(2t-t^4)}=\frac{1-t}{1-t(2-t^3)}=\frac{1}{1-(t+t^2+t^3)}.$$

\begin{figure}[h]
\begin{center}
\includegraphics[height=.75in]{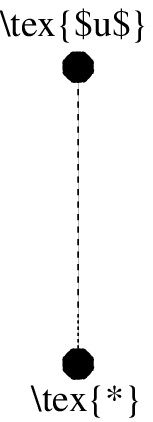}
\end{center}
\caption{$\Gamma_{D_n}^r$} \label{Fi:dnr}
\end{figure}

The automorphism s acting on the algebra when n is even fixes the
top vertex, the minimal vertex, and two vertices on level two
($v_{12}$ and $v_{n/2+1\,n/2+2}$)(see Figure \ref{Fi:dns}).
Similarly, when n is odd s fixes the top vertex, the minimal vertex,
and one vertex on each of levels one and two ($v_{12}$ and
$w_{(n+3)/2}$). Finally, rs fixes the top vertex, the minimal
vertex, and two vertices on level one ($w_2$ and $w_{n/2+2}$). Thus,
in each case, there are 4 vertices, two edges of length one, and two
of length two.  For the coefficient of $t^4$, we have $u>*$,
$u>\text{vertex}>*$, and $u>\text{vertex}>*$. Thus,
$$Tr_s(t) = Tr_{rs}(t) =
\frac{1-t}{1-(4t-2t^2-2t^3+t^4)}=\frac{1-t}{1-t(2-t)(2-t^2)}=\frac{1}{1-(3t+t^2-t^3)}.$$

\begin{figure}[h]
\begin{center}
\includegraphics[height=1in]{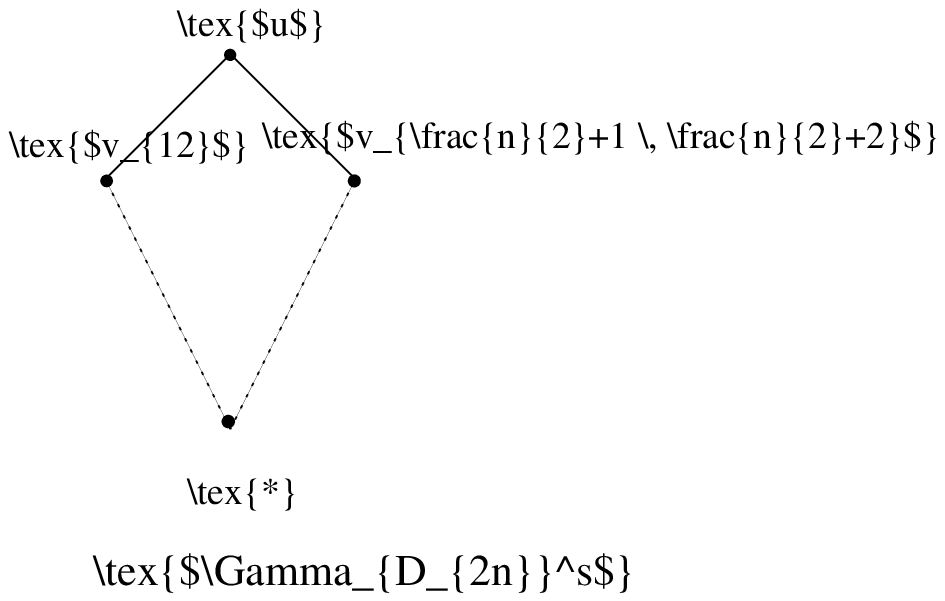}
\hfill
\includegraphics[height=1in]{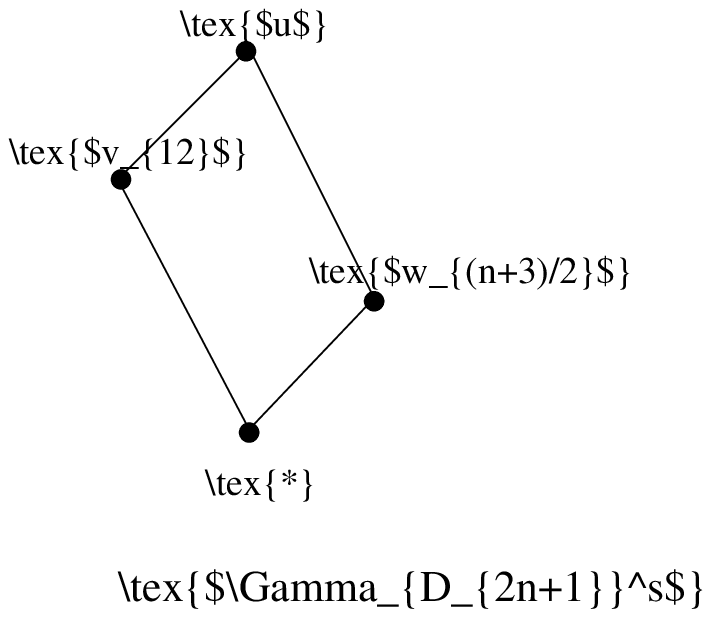}
\hfill
\includegraphics[height=1in]{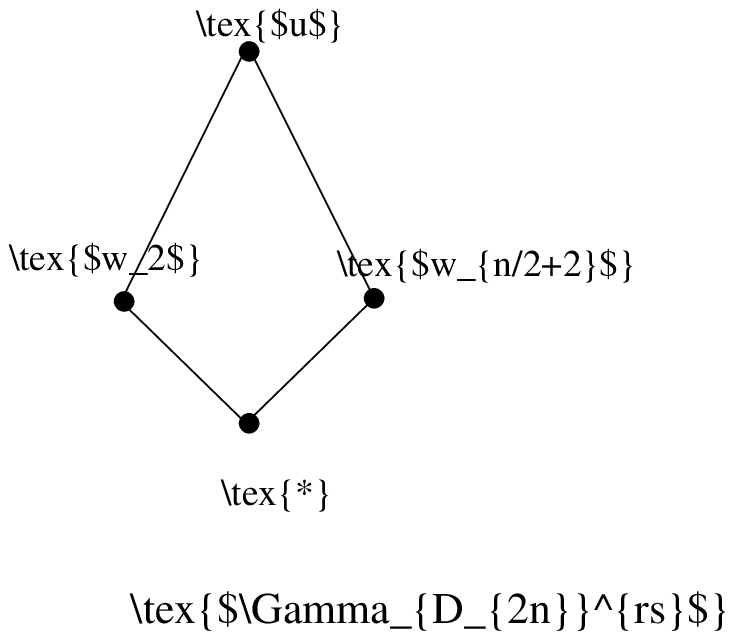}
\end{center}
\caption{$\Gamma_{D_n}^\sigma$} \label{Fi:dns}
\end{figure}

\bigskip

\subsection{The Algebra $Q_n$}

\medskip

\noindent Recall that $V_\sigma$ denotes the set of vertices in $\Gamma$ fixed
by $\sigma$.

\begin{thm}\label{T:trqn}Let $\sigma \in S_n$ and $\sigma = \sigma_1 \cdots \sigma_m$ be its cycle
decomposition.  Denote the length of $\sigma_j$ by $i_j$ (so $i_j
\geq 1, \, i_1+\cdots +i_m=n$).  Then
\begin{equation}\label{Eq:qntr}Tr_\sigma(Q_n,t)=\frac{1-t}{1-t\displaystyle
\prod_{j=1}^m (2-t^{i_j})}.\end{equation}\end{thm}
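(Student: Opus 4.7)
The plan is to apply Theorem \ref{T:trace} and reduce the computation of the sum in its denominator to a Möbius-function calculation on the Boolean lattice. The first step is to identify the sublattice $V_\sigma$. A vertex $v_A$ of $Q_n$ is fixed by $\sigma$ precisely when $\sigma(A)=A$, which means $A$ is a union of cycles of $\sigma$. Writing $\sigma=\sigma_1\cdots\sigma_m$ with cycle supports $C_1,\ldots,C_m$ of lengths $i_1,\ldots,i_m$, the map $v_A\mapsto S:=\{\,j:C_j\subseteq A\,\}$ gives an isomorphism of posets between $V_\sigma$ and the Boolean lattice $2^{[m]}$. Under this identification the level function becomes $|v_A|=f(S):=\sum_{j\in S}i_j$, and the minimal vertex $*$ corresponds to $\emptyset$.

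Next I would apply Theorem \ref{T:trace}, so that it suffices to prove
\[
\sum_{\substack{v_1>\cdots>v_l\geq *\\ v_j\in V_\sigma}}(-1)^{l-1}t^{|v_1|-|v_l|}=\prod_{j=1}^m(2-t^{i_j}).
\]
I would group the chains in $V_\sigma$ by their top $T:=S_1$ and bottom $B:=S_l$, so the sum becomes
\[
\sum_{B\subseteq T\subseteq[m]}t^{f(T)-f(B)}\Bigl(\sum_{l\geq1}(-1)^{l-1}c_l(B,T)\Bigr),
\]
where $c_l(B,T)$ counts chains $B=S_l\subsetneq\cdots\subsetneq S_1=T$ in $2^{[m]}$. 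By Philip Hall's theorem, the inner bracket is exactly the Möbius function $\mu(B,T)$ of the Boolean lattice, which equals $(-1)^{|T\setminus B|}$ when $B\subseteq T$.

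The final step is to evaluate
\[
\sum_{B\subseteq T\subseteq[m]}(-1)^{|T\setminus B|}t^{f(T)-f(B)}
=\sum_{C\subseteq[m]}(-1)^{|C|}t^{f(C)}\,\bigl|\{B:B\subseteq[m]\setminus C\}\bigr|
=\sum_{C\subseteq[m]}2^{m-|C|}\prod_{j\in C}(-t^{i_j}),
\]
using the substitution $C=T\setminus B$; for each $C$, the number of admissible $B$'s is $2^{m-|C|}$. This expression factors cleanly over $j\in[m]$ as $\prod_{j=1}^m(2-t^{i_j})$, which combined with Theorem \ref{T:trace} gives Equation~(\ref{Eq:qntr}).

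The calculation itself is routine once the poset identification is in place; the only point requiring care is verifying the isomorphism $V_\sigma\cong 2^{[m]}$ and the interpretation of the level function as $f(S)$, after which the Möbius step on the Boolean lattice is standard. As a sanity check, the $n=1$ case gives $\prod(2-t^{i_j})=2-t$ and hence $Tr(t)=(1-t)/(1-t(2-t))=1/(1-t)$, which matches the Hilbert series of $A(Q_1)=k[e]$, and specializing $\sigma=\mathrm{id}$ (so $m=n$ and all $i_j=1$) recovers $H(Q_n,t)=(1-t)/(1-t(2-t)^n)$ from [RSW, Thm 3].
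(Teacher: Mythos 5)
Your proposal is correct and follows essentially the same route as the paper: apply Theorem \ref{T:trace}, observe that the $\sigma$-fixed vertices form a Boolean lattice on the orbits with level function $\sum_{j\in S}i_j$, evaluate the alternating chain sum as the M\"obius function $(-1)^{|T\setminus B|}$, reindex by $C=T\setminus B$ to get $\sum_C 2^{m-|C|}(-1)^{|C|}t^{f(C)}$, and factor over orbits. The only cosmetic difference is that you invoke Philip Hall's theorem for the M\"obius step, whereas the paper proves the corresponding identity directly by induction on the number of orbits (its Lemma and Corollary \ref{C:coro}).
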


\noindent First of all, notice that when $\sigma=(1)$, this yields $H(Q_n,t)$
given above.

We will prove this using Equation \ref{Eq:trace} from Section
\ref{S:grtr}:
\[Tr_\sigma(A(\Gamma),t)=\frac{1-t}{1-t\displaystyle \sum_{\substack{v_1>\cdots>v_l\geq
*\\v_1,...,v_l \in V_\sigma}}(-1)^{l-1} t^{|v_1|-|v_l|}}.\]

\noindent If $w \subseteq \{1,...,n\}$ is $\sigma$-invariant, let
$\|w\|$ be the number of $\sigma$-orbits in w. Also, let
$\mathcal{O}_j$ denote the non-trivial orbit of $\sigma_j$.

The following Lemma and Corollary and their proofs are parallel to
[\cite{RSW},Lemma 2] and

\noindent [\cite{RSW},Corollary 1].  In the case
where $\sigma$ is the identity, they are the same.

\begin{lemma}Let $w \subseteq \{1,...,n\}$ be fixed by $\sigma$.
Then
$\displaystyle \sum_{w=w_1\supset ...\supset w_l=\emptyset} (-1)^l =
(-1)^{\|w\|+1}$ where the sum is over all chains of $\sigma$-fixed subsets of $w$.\end{lemma}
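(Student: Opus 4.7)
The plan is to reduce the sum to a calculation on the Boolean lattice of $\sigma$-orbits, and then prove the identity by induction on $\|w\|$.

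First I would observe that a $\sigma$-fixed subset of $w$ is exactly a union of $\sigma$-orbits contained in $w$. Consequently, the poset of $\sigma$-fixed subsets of $w$ (ordered by inclusion) is isomorphic to the Boolean lattice $B_k$ on the set of $\sigma$-orbits of $w$, where $k = \|w\|$. Under this isomorphism, chains $w = w_1 \supset \cdots \supset w_l = \emptyset$ of $\sigma$-fixed subsets of $w$ correspond bijectively to maximal-endpoint chains in $B_k$ from the top element to the bottom element. So it suffices to prove
\[
f(k) := \sum_{[k] = S_1 \supset S_2 \supset \cdots \supset S_l = \emptyset} (-1)^l = (-1)^{k+1}
\]
where the sum runs over all strictly decreasing chains in $B_k$ starting at $[k]$ and ending at $\emptyset$.

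Next I would set up a recursion by conditioning on the second term $S_2$ of the chain. For $k = 0$ (i.e.\ $w = \emptyset$), the only admissible chain is the singleton $S_1 = \emptyset$, giving $f(0) = -1 = (-1)^{0+1}$. For $k \geq 1$, any admissible chain has length $l \geq 2$ and $S_2$ is an arbitrary proper subset of $[k]$; stripping $S_1 = [k]$ off the chain converts $(-1)^l$ into $-(-1)^{l-1}$, yielding
\[
f(k) = -\sum_{S \subsetneq [k]} f(|S|) = -\sum_{j=0}^{k-1} \binom{k}{j} f(j).
\]

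Finally, I would close the induction by the binomial theorem. Assuming $f(j) = (-1)^{j+1}$ for all $j < k$, the recursion gives
\[
f(k) = -\sum_{j=0}^{k-1} \binom{k}{j} (-1)^{j+1} = \sum_{j=0}^{k-1} \binom{k}{j}(-1)^{j} = -\binom{k}{k}(-1)^k = (-1)^{k+1},
\]
using $\sum_{j=0}^{k} \binom{k}{j}(-1)^j = 0$. Substituting $k = \|w\|$ recovers the desired formula. There is no serious obstacle here; the only conceptual point is recognizing that the $\sigma$-fixed subsets of $w$ form a Boolean lattice on the $\|w\|$ orbits, after which the rest is a routine inclusion--exclusion style induction.
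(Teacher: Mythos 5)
Your proof is correct and follows essentially the same route as the paper's: both induct on the number of orbits $\|w\|$, condition on the second term of the chain, and finish with the vanishing alternating sum of binomial coefficients. Your version merely makes the reduction to the Boolean lattice of orbits explicit and starts the induction at $\|w\|=0$ rather than $\|w\|=1$, which slightly cleans up the sign bookkeeping but is not a different argument.
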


\begin{proof}
If $\|w\|=1$, then $w=\mathcal{O}_j$ for some j.  Thus, there are no
fixed proper subsets of w, and we get that both sides are equal to
1.  Assume the result holds for all sets with $\|\cdot \| < \|w\|$.
Then $\displaystyle \sum_{w\supset \cdots\supset w_l=\emptyset}
(-1)^l = \displaystyle \sum_{w\supset w_2 \supseteq \emptyset}
\displaystyle \sum_{w_2 \supset \cdots\supset w_l=\emptyset} (-1)^l
= \displaystyle \sum_{w\supset w_2 \supseteq \emptyset}
(-1)^{\|w_2\|}$ by the induction assumption. \\Now $\displaystyle
\sum_{w\supset w_2 \supseteq \emptyset} (-1)^{\|w_2\|} =
\displaystyle \left(\sum_{w\supseteq w_2 \supseteq \emptyset}
(-1)^{\|w_2\|}\right)-(-1)^{\|w\|}$.  Say $w=\mathcal{O}_1\cup
\mathcal{O}_2\cup \cdots \cup \mathcal{O}_r$.  Then the number of
$w_2$ such that $w_2\subseteq w$ and $\|w_2\|=i$ is $\binom{r}{i}$.
Hence, we have $\displaystyle \sum_{w\supset w_2 \supseteq
\emptyset} (-1)^{\|w_2\|} = \displaystyle \left(\sum_{w\supseteq w_2
\supseteq \emptyset} (-1)^{\|w_2\|}\right)-(-1)^{\|w\|} =
\displaystyle \sum_{i=0}^r \binom{r}{i} (-1)^i + (-1)^{\|w\|+1} =
(-1)^{\|w\|+1}$, as desired, since the alternating sum of the
binomial coefficients is zero.
\end{proof}

\begin{coro}\label{C:coro}Let $\{1,...,n\}\supseteq v\supseteq w$ be fixed by $\sigma$.  Then

$\displaystyle \sum_{v=v_1\supset v_2\supset \cdots \supset
v_l=w}(-1)^l = (-1)^{\|v\|-\|w\|+1}$ where the sum is over all
chains of $\sigma$-fixed subsets.\end{coro}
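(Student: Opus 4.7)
My plan is to reduce Corollary \ref{C:coro} directly to the preceding Lemma via a bijection on chains. Specifically, I will show that chains of $\sigma$-fixed subsets from $v$ down to $w$ are in length-preserving bijection with chains of $\sigma$-fixed subsets from $v \setminus w$ down to $\emptyset$, and then apply the Lemma to $v \setminus w$.

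The bijection I have in mind is the map $\Phi$ sending a chain $v = v_1 \supset v_2 \supset \cdots \supset v_l = w$ to the chain $v \setminus w = v_1 \setminus w \supset v_2 \setminus w \supset \cdots \supset v_l \setminus w = \emptyset$, with inverse $u_i \mapsto u_i \cup w$. First I would check that each $v_i \setminus w$ is $\sigma$-fixed: since both $v_i$ and $w$ are unions of $\sigma$-orbits and $w \subseteq v_i$, the set difference is also a union of $\sigma$-orbits. Next I would verify that strict inclusions are preserved in both directions (this is immediate because we are adding or subtracting the same set $w$ to every term), and that $\Phi$ preserves the length $l$ of the chain.

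Once the bijection is in hand, the alternating sum on the left-hand side of the Corollary equals
\[
\sum_{v\setminus w = u_1 \supset \cdots \supset u_l = \emptyset}(-1)^l,
\]
which by the Lemma equals $(-1)^{\|v \setminus w\| + 1}$. Finally I would note that $\|v \setminus w\| = \|v\| - \|w\|$: since $w$ is itself a union of $\sigma$-orbits inside $v$, removing it removes exactly $\|w\|$ orbits from the count. Substituting gives $(-1)^{\|v\| - \|w\| + 1}$, as required.

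There is no real obstacle here; the statement is essentially the Lemma applied to a shifted interval of the subset lattice. The only subtlety worth spelling out is confirming that the set-difference map respects both $\sigma$-invariance and the count of orbits, which is why I would isolate the identity $\|v \setminus w\| = \|v\| - \|w\|$ as a separate observation rather than fold it silently into the computation.
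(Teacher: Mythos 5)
Your proof is correct and is essentially identical to the paper's: the paper also reduces to the Lemma by the correspondence $u \mapsto u \cap w'$ where $w'$ is the complement of $w$ in $v$, which for $u \subseteq v$ is exactly your map $u \mapsto u \setminus w$, and then uses $\|w'\| = \|v\| - \|w\|$. Your version just spells out the $\sigma$-invariance and orbit-count checks a bit more explicitly.
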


\begin{proof}
Let $w'$ denote the complement of $w$ in $v$.  Sets $u$ invariant
under $\sigma$ and satisfying $v\supseteq u\supseteq w$ are in
one-to-one correspondence with $\sigma$-invariant subsets of $w'$
via the map $u\mapsto u \cap w'$.  Thus, $\displaystyle
\sum_{v=v_1\supset v_2\supset \cdots\supset v_l=w}(-1)^l =
\displaystyle \sum_{w'=v'_l\supset \cdots\supset
v'_1=\emptyset}(-1)^l = (-1)^{\|w'\|+1}$ by the lemma. Because
$\|w'\|=\|v\|-\|w\|$, this gives us what we want.\end{proof}

\begin{proof}[Proof of Theorem \ref{T:trqn}]

By Corollary \ref{C:coro}, \begin{align*}&\sum_{v_1\supset\cdots
\supset v_l\supseteq\emptyset} (-1)^lt^{|v_1|-|v_l|+1} =
\sum_{\{1,...,n\}\supseteq v_1\supseteq v_l\supseteq
\emptyset} (-1)^{\|v_1\|-\|v_l\|+1}t^{|v_1|-|v_l|+1}\\
&=
\sum_{\substack{w,v_l\\w\cap v_l=\emptyset}} (-1)^{\|w\|+1}t^{|w|+1}
= \sum_{w}2^{m-\|w\|} (-1)^{\|w\|+1}t^{|w|+1}\end{align*}

\noindent The $\sigma$-invariant sets $w$ are unions of
$\sigma$-orbits. Write $a_j=1$ if $\mathcal{O}_j$ is contained in
$w$ and $a_j=0$ if not.  Then the m-tuple $\{a_1,...,a_m\}$ tells us
which orbits are contained in $w$.  We can then write $\displaystyle
\sum_{w}2^{m-\|w\|} (-1)^{\|w\|+1}t^{|w|+1}$ as $\displaystyle
\sum_{a_1,...,a_m\in \{0,1\}} (-1)^{\sum a_j + 1} 2^{m-\sum a_j}
t^{\sum (a_ji_j)+1}$. This equals \begin{align*}&
-t\sum_{a_1,...,a_m \in\{0,1\}} \prod_{j=1}^m
(-1)^{a_j}2^{1-a_j}t^{a_ji_j}\\&= -t \prod_{j=1}^m \sum_{a_j=0}^1
(-1)^{a_j}2^{1-a_j}t^{a_ji_j}\\&= -t\prod_{j=1}^m (2-t^{i_j})
\end{align*}

\noindent Therefore, we have (\ref{Eq:qntr}).  \end{proof}

\begin{eg}Here are the graded trace functions for $Q_4$:
\[Tr_{(1)}(Q_4,t) = \frac{1-t}{1-t(2-t)^4} \qquad Tr_{(12)}(Q_4,t) =
\frac{1-t}{1-t(2-t^2)(2-t)^2}\]
\[Tr_{(123)}(Q_4,t) =
\frac{1-t}{1-t(2-t^3)(2-t)} \qquad Tr_{(12)(34)}(Q_4,t) =
\frac{1-t}{1-t(2-t^2)^2} \]
\[Tr_{(1234)}(Q_4,t) =
\frac{1-t}{1-t(2-t^4)}\]
\end{eg}

\section{Representations of Aut$(A(\Gamma))$ acting on
$A(\Gamma)$}\label{S:reps}

Now let us determine the multiplicities of the irreducible
representations.  Assume $A(\Gamma)_{[i]}$ is a completely reducible
Aut$(\Gamma)$-module.  Note that this is true in our examples.  Fix
n. Let the graded trace generating function be denoted by
$Tr_{\sigma}(t)=\sum_i Tr_{\sigma,i}t^i$ where $Tr_{\sigma,i}=Tr
\sigma |_{A(\Gamma)_{[i]}}$.  Let $\phi$ be an irreducible
representation of Aut$(\Gamma)$ and $m_{\phi}(t)=\sum_i
m_{\phi,i}t^i$ where $m_{\phi,i}$ is the multiplicity of $\phi$ in
$A(\Gamma)_{[i]}$.  Finally, let the matrix $C=[\chi_{\sigma \phi}]$
where $\chi_{\sigma\phi}$ is the trace of $\sigma$ on the module
which affords the irreducible representation $\phi$; i.e. $C$ is the
character table of Aut$(\Gamma)$.

Then, if we fix the degree, $Tr_{\sigma,i}=\sum_{\phi} \chi_{\sigma
\phi}m_{\phi, i}$; so we have $Tr_{\sigma}(t)=\sum_{\phi}
\chi_{\sigma \phi}m_{\phi}(t)$.  Write
$\vec{Tr}(t)=[Tr_{\sigma_1}(t) ... Tr_{\sigma_l}(t)]^T$ and
$\vec{m}(t)=[m_{\phi_1}(t) ... m_{\phi_l}(t)]^T$.  Finally,
$$\vec{Tr}(t)=C^T\vec{m}(t) \implies \vec{m}(t)=(C^{T})^{-1}\vec{Tr}(t).$$

\subsection{Representations of Aut$(A(\Gamma_{D_n}))$ acting
on $A(\Gamma_{D_n})$}

Recall that the character table for $D_{n}$ where $n=2m$ is even is:

\begin{center}

\begin{tabular}{r|cccccccc}
&1&r&...&$r^j$&...&$r^m$&s&rs\\
\hline $\chi_{triv}$&1&1&...&1&...&1&1&1\\
$\chi_{1-1}$&1&1&...&1&...&1&-1&-1\\
$\chi_{-11}$&1&-1&...&$(-1)^j$&...&$(-1)^m$&1&-1\\
$\chi_{-1-1}$&1&-1&...&$(-1)^j$&...&$(-1)^m$&-1&1\\
$\chi_k$&2&$2\cos(2\pi k/n)$&...&$2\cos(2\pi kj/n)$&...&$2\cos(2\pi
km/n)$&0&0\\
\end{tabular}

\end{center}

\noindent where ($1\leq k\leq m-1$), $r=(12...n)$, and $s=(12)(3n)(4\,
n-1)...(\frac{n}{2}+1\,\, \frac{n}{2}+2)$;
so, \\$rs=(13)(4n) ... (\frac{n}{2}+1\,\, \frac{n}{2}+3)$.

When $n=2m+1$ is odd the character table is:

\begin{center}

\begin{tabular}{r|ccccccc}
&1&r&...&$r^j$&...&$r^m$&s\\
\hline $\chi_{triv}$&1&1&...&1&...&1&1\\
$\chi_{1-1}$&1&1&...&1&...&1&-1\\
$\chi_k$&2&$2\cos(2\pi k/n)$&...&$2\cos(2\pi kj/n)$&...&$2\cos(2\pi
km/n)$&0\\
\end{tabular}

\end{center}

\noindent where ($1\leq k\leq m$), $r=(12...n)$ and
$s=(12)(3n)...(\frac{n+1}{2}\,\, \frac{n+5}{2})$.

\begin{prop}Let $\vec{m}(t)$ be the vector of the graded multiplicities of the irreducible representations
of $D_n$ as described above.  Set
\\$a=\frac{1}{1-((2n+1)t-(2n-1)t^2+t^3)}$,
$b=\frac{1}{1-(t+t^2+t^3)}$, and $c=\frac{1}{1-(3t+t^2-t^3)}$.

a) Let n be even.  Then,
$$\vec{m}(t)=\left[ \begin{array}{c}
\frac{1}{2n}a+\frac{n-1}{2n}b+\frac{1}{2}c \\
\frac{1}{2n}a+\frac{n-1}{2n}b-\frac{1}{2}c \\
\frac{1}{2n}(a-b) \\
\frac{1}{2n}(a-b) \\
\frac{1}{n}(a-b)\\
 \vdots \\
 \frac{1}{n}(a-b)\\ \end{array} \right]$$

b) Let n be odd.  Then,
$$\vec{m}(t)=\left[ \begin{array}{c}
\frac{1}{2n}a+\frac{n-1}{2n}b+\frac{1}{2}c \\
\frac{1}{2n}a+\frac{n-1}{2n}b-\frac{1}{2}c \\
\frac{1}{n}(a-b)\\
 \vdots \\
 \frac{1}{n}(a-b)\\ \end{array} \right]$$

This is obtained from deleting the third and fourth entries in the n
is even case.
\end{prop}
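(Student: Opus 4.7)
The plan is to apply the general formula $\vec{m}(t)=(C^T)^{-1}\vec{Tr}(t)$ from the beginning of Section~\ref{S:reps}, which by the usual character orthogonality (and the fact that $D_n$ has only real-valued characters) is equivalent to
\[
m_\phi(t)=\frac{1}{2n}\sum_{\sigma\in D_n}\chi_\phi(\sigma)\,Tr_\sigma(t).
\]
This is legitimate because, by Theorem~\ref{T:autdn} and the remark closing Section~\ref{S:aut}, each homogeneous component $A(\Gamma_{D_n})_{[i]}$ is a completely reducible $\mathrm{Aut}(A(\Gamma_{D_n}))$-module in characteristic zero.

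The key simplification is that the graded traces computed in Section~\ref{S:genfn} depend on $\sigma$ only through the coarse classification identity/nontrivial rotation/reflection: $Tr_{\mathrm{id}}(t)=a$, $Tr_{r^i}(t)=b$ for every $1\le i\le n-1$, and $Tr_\rho(t)=c$ for every reflection $\rho$. Collecting terms,
\[
m_\phi(t)=\frac{1}{2n}\Bigl[\chi_\phi(1)\,a+S_R(\phi)\,b+S_F(\phi)\,c\Bigr],
\]
with $S_R(\phi):=\sum_{i=1}^{n-1}\chi_\phi(r^i)$ and $S_F(\phi):=\sum_{\rho}\chi_\phi(\rho)$ running over the $n$ reflections. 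This expression is uniform in the parity of $n$ and bypasses the question of whether the reflections split into one or two conjugacy classes.

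I will then evaluate $S_R$ and $S_F$ row by row against the character tables displayed above. For $S_R$ the relevant identities are $\sum_{j=0}^{n-1}\omega^{kj}=0$ for $k\not\equiv0\pmod n$ (with $\omega$ a primitive $n$-th root of unity) and, in the even case, $\sum_{j=0}^{n-1}(-1)^j=0$. These give $S_R=n-1$ precisely when $\chi_\phi$ restricts trivially to $\langle r\rangle$ (the rows $\chi_{\mathrm{triv}}$ and $\chi_{1,-1}$), and $S_R=-\chi_\phi(1)$ in every other row. Similarly $S_F=n$ for $\chi_{\mathrm{triv}}$, $S_F=-n$ for $\chi_{1,-1}$, and $S_F=0$ for every remaining irreducible (for the $\chi_k$ because their value on each reflection is $0$; for $\chi_{-1,\pm1}$ because the reflection values split evenly between $+1$ and $-1$).

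Substituting then gives the stated formulas: $\chi_{\mathrm{triv}}$ and $\chi_{1,-1}$ produce the opposite-sign $\pm c/2$ contributions while sharing $\frac{a}{2n}+\frac{n-1}{2n}b$; the two exceptional one-dimensional characters $\chi_{-1,\pm1}$ (present only when $n$ is even) each give $\frac{1}{2n}(a-b)$; and each two-dimensional $\chi_k$ gives $\frac{1}{n}(a-b)$ since $\chi_\phi(1)=2$. Part (b) then follows from part (a) simply by deleting the two rows that disappear in the odd character table. There is no real obstacle here; the only care needed is the parity-dependent identity $\sum_{j=1}^{n-1}(-1)^j=-1$ for $n$ even, and the observation that because the two reflection conjugacy classes contribute the same trace $c$, they may safely be summed together even when they are not conjugate in $D_n$.
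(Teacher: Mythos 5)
Your proposal is correct and is essentially the computation the paper performs: the paper checks the claimed $\vec{m}(t)$ by multiplying it against the transpose of the character table and recovering $\vec{Tr}(t)=[a,b,\dots,b,c,c]^T$, while you run the same linear algebra in the other direction, inverting $C^T$ via character orthogonality to derive $m_\phi(t)=\frac{1}{2n}\sum_{\sigma}\chi_\phi(\sigma)Tr_\sigma(t)$ and then evaluating the rotation and reflection sums. Your row-by-row evaluations ($S_R=n-1$ or $-\chi_\phi(1)$, $S_F=\pm n$ or $0$) are all correct and reproduce the stated entries in both parities.
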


\begin{proof} We verify this claim by multiplying the transpose of the character table of $D_n$
by $\vec{m}(t)$.  The result is
$$\vec{Tr}(t)=\left[
\begin{array}{c}
a \\
b \\
\vdots \\
b \\
c \\
c \\ \end{array} \right]$$ as desired.\end{proof}

Notice that all of the representations are realized; and, with large
multiplicity.

\subsection{Representations of $S_n$ acting on $Q_n$}

Unlike the $A(\Gamma_{D_n})$ case, we cannot write down one table
giving all of the values in terms of the graded trace functions.
However, we can give them in terms of the Frobenius formula.  First,
however, we will give an example.

\begin{eg}\label{E:repsq4}Irreducible Representations for $Q_4$:

The character table for $S_4$ is:

\begin{center}
\begin{tabular}{l|ccccc}
&(1)&(12)&(123)&(1234)&(12)(34)\\
\hline
$\chi_{triv}$&1&1&1&1&1\\
$\chi_{sgn}$&1&-1&1&-1&1\\
$\chi_3$&2&0&-1&0&2\\
$\chi_{reg}$&3&1&0&-1&-1\\
$\chi_{sgn\otimes reg}$&3&-1&0&1&-1
\end{tabular}
\end{center}

\medskip

Let $a=Tr_{(1)}(Q_4,t) = \frac{1-t}{1-t(2-t)^4} \qquad b=
Tr_{(12)}(Q_4,t) = \frac{1-t}{1-t(2-t^2)(2-t)^2}$,

$c = Tr_{(123)}(Q_4,t) = \frac{1-t}{1-t(2-t^3)(2-t)} \qquad d =
Tr_{(1234)}(Q_4,t) = \frac{1-t}{1-t(2-t^4)}$

$e= Tr_{(12)(34)}(Q_4,t) = \frac{1-t}{1-t(2-t^2)^2}$
\smallskip

Then, the multiplicities of the irreducible representations of $S_4$
acting on $Q_4$ as sums of the graded trace generating functions
are:

$m_{triv} =
\frac{1}{24}a+\frac{1}{4}b+\frac{1}{3}c+\frac{1}{4}d+\frac{1}{8}e$

$m_{sgn} =
\frac{1}{24}a-\frac{1}{4}b+\frac{1}{3}c-\frac{1}{4}d+\frac{1}{8}e$

$m_{3} = \frac{1}{12}a-\frac{1}{3}c+\frac{1}{4}e$

$m_{reg} = \frac{1}{8}a+\frac{1}{4}b-\frac{1}{4}d-\frac{1}{8}e$

$m_{sgn\otimes reg} = \frac{1}{8}a-\frac{1}{4}b
+\frac{1}{4}d-\frac{1}{8}e$

The numerical values for the first few degrees are given below:

\begin{center}
\begin{tabular}{l|ccccc}
&$\chi_{triv}$&$\chi_{sgn}$&$\chi_{3}$&$\chi_{reg}$&$\chi_{sgn\times reg}$\\
\hline $m_{\phi,1}$&4&0&1&3&0\\
$m_{\phi,2}$&26&1&17&36&13\\
$m_{\phi,3}$&219&54&239&434&273
\end{tabular}
\end{center}
\end{eg}

We can also write the multiplicities in terms of Frobenius' formula.
First we will give the notation used in the formula.   Let $C_i$ be
a representative from the conjugacy class $i$ and $i_j$ be the
number of j-cycles in $i$.  Also, let $\lambda$ be a partition of n
(representing an irreducible representation),
$\Delta(x)=\displaystyle \prod_{i<j}(x_i-x_j)$ and $P_j(x) =
x_1^j+\cdots+x_k^j$ where $k$ is at least the number of rows in
$\lambda$.  Set $l_1=\lambda_1+k-1,\,
l_2=\lambda_2+k-2,...,l_k=\lambda_k$.  Finally, for $f(x)\in
\mathbb{C}[x_1,...,x_k]$ the scalar $f(x)_{l_1,...,l_k}$ is defined
by $f(x) = \displaystyle \sum_{l_1,...,l_k} f(x)_{l_1,...,l_k}
x_1^{l_1}x_2^{l_2}\cdots x_k^{l_k}$. Then Frobenius' formula says
$\chi_\lambda(C_i) = [\Delta(x) \prod_j
P_j(x)^{i_j}]_{l_1,...,l_k}$.

\begin{prop}\label{P:frob}Let $r$ denote the degree and $\lambda_l$
the irreducible representation.  Then \[m_{\lambda_l,r} = \left[
\frac{1}{n!} \displaystyle \sum_{j = \text{partition of
n}}\chi_{\lambda_l}(C_j) |\mathcal{C}(j)| Tr_{(j)}
\right]_{(l_1,...,l_k,r)}\]\end{prop}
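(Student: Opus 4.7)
The plan is to combine the matrix identity $\vec{m}(t)=(C^{T})^{-1}\vec{Tr}(t)$ derived at the start of Section~\ref{S:reps} with the second orthogonality relation for the irreducible characters of $S_n$, and then substitute Frobenius' formula into the resulting expression.

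First I would invert $C^{T}$ explicitly. Because every irreducible character of $S_n$ is real-valued, the column orthogonality relation reads
$$\frac{1}{n!}\sum_{\sigma}|\mathcal{C}(\sigma)|\,\chi_\phi(\sigma)\,\chi_\psi(\sigma)=\delta_{\phi\psi},$$
where the sum runs over conjugacy-class representatives. With the convention $C_{\phi\sigma}=\chi_\phi(\sigma)$ used in the paper, this is exactly the statement that $((C^{T})^{-1})_{\phi\sigma}=\tfrac{|\mathcal{C}(\sigma)|}{n!}\chi_\phi(\sigma)$. Substituting this into $\vec{m}(t)=(C^{T})^{-1}\vec{Tr}(t)$ and reindexing the conjugacy classes of $S_n$ by partitions $j$ of $n$ (with representative $C_j$) yields
$$m_{\lambda_l}(t)=\frac{1}{n!}\sum_{j\vdash n}|\mathcal{C}(j)|\,\chi_{\lambda_l}(C_j)\,Tr_{(j)}(t).$$

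Next I would substitute Frobenius' formula $\chi_{\lambda_l}(C_j)=\bigl[\Delta(x)\prod_{\nu}P_\nu(x)^{i_\nu}\bigr]_{l_1,\ldots,l_k}$ into this sum, where $i_\nu$ denotes the number of $\nu$-cycles in $j$. The coefficient-extraction operator $[\,\cdot\,]_{l_1,\ldots,l_k}$ is $\mathbb{C}$-linear, and the remaining factors $\tfrac{1}{n!}|\mathcal{C}(j)|Tr_{(j)}(t)$ are scalars in the ring $\mathbb{C}\llbracket t\rrbracket$ that carry no $x$-dependence. Hence the extraction may be pulled outside both the sum over $j$ and the multiplication by these scalars.

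Finally, extracting the coefficient of $t^{r}$ — which by the definition $m_{\lambda_l}(t)=\sum_{r}m_{\lambda_l,r}t^{r}$ equals $m_{\lambda_l,r}$ on the left-hand side — produces the stated identity, with the compound subscript $(l_1,\ldots,l_k,r)$ denoting simultaneous extraction of $x_1^{l_1}\cdots x_k^{l_k}t^{r}$. This joint extraction is unambiguous because the two operators act on disjoint sets of variables ($x$ and $t$) and therefore commute. There is no genuine obstacle in this argument; the work is purely formal bookkeeping, and the only point that deserves a line of justification is precisely this commutativity of the two coefficient extractions together with their linearity in the partition-indexed sum.
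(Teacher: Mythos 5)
Your proposal is correct and follows essentially the same route as the paper: invert the character table via orthogonality to get $m_{\lambda_l}(t)=\frac{1}{n!}\sum_{j}|\mathcal{C}(j)|\,\chi_{\lambda_l}(C_j)\,Tr_{(j)}(t)$, then substitute Frobenius' formula and extract coefficients (the paper reaches the same inverse by computing $S^{T}S=D$ with the column orthogonality relation, whereas you write the inverse down directly from the relation $\frac{1}{n!}\sum_{\sigma}|\mathcal{C}(\sigma)|\chi_\phi(\sigma)\chi_\psi(\sigma)=\delta_{\phi\psi}$ — which, incidentally, is the first (row) orthogonality relation, not the second, though the formula you use is correct). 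Your extra remark on the commutativity of the $x$- and $t$-coefficient extractions is a harmless refinement of a point the paper leaves implicit.
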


\begin{proof}Let \[S = \left[ \begin{array}{ccc} \chi_{\lambda_1}(C_1)&\cdots&\chi_{\lambda_1}(C_k)\\  \vdots& &\vdots\\
\chi_{\lambda_k}(C_1)&\cdots&\chi_{\lambda_k}(C_k) \end{array}\right]\] be the character table of $S_n$.  By the orthogonality relations, \[S^TS=D= \left[ \begin{array}{cccc} \sum_i \chi_{\lambda_i}(C_1)^2&0&\cdots&0\\  0&\ddots& &\vdots\\ \vdots&&\ddots&0\\
0&\cdots&0&\sum_i \chi_{\lambda_i}(C_k)^2 \end{array}\right].\]  Thus \[S^{-1}=D^{-1}S^T = \left[ \begin{array}{ccc} \chi_{\lambda_1}(C_1)/\sum_i \chi_{\lambda_i}(C_1)^2&\cdots&\chi_{\lambda_k}(C_1)/\sum_i \chi_{\lambda_i}(C_1)^2\\  \vdots& &\vdots\\
\chi_{\lambda_1}(C_k)/\sum_i
\chi_{\lambda_i}(C_k)^2&\cdots&\chi_{\lambda_k}(C_k)/\sum_i
\chi_{\lambda_i}(C_k)^2 \end{array}\right].\]

\noindent Because $\vec{m}(t)S=\vec{Tr}(t)$,
$\vec{m}(t)=\vec{Tr}(t)S^{-1}$; and so, \[m_{\lambda_l}(t) =
\frac{\chi_{\lambda_l}(C_1)Tr_{\sigma_1}}{\sum_i
\chi_{\lambda_i}(C_1)^2}+ \cdots +
\frac{\chi_{\lambda_l}(C_k)Tr_{\sigma_k}}{\sum_i
\chi_{\lambda_i}(C_k)^2}.\] However, $\sum_i \chi_{\lambda_i}(C_j)^2
= [S_n:\mathcal{C}(j)] = n!/|\mathcal{C}(j)|$, where
$|\mathcal{C}(j)|$ is the size of the conjugacy class of partition
$j$. Thus, \[m_{\lambda_l,r} = \left[ \frac{1}{n!} \displaystyle
\sum_{j = \text{partition of
n}}\chi_{\lambda_l}(C_j)|\mathcal{C}(j)| Tr_{(j)}
\right]_{(l_1,...,l_k,r)} \qedhere\]
\end{proof}

\section{Representations of Aut$(A(\Gamma))$ acting on
$A(\Gamma)^!$}\label{S:dual}

We will use the same methodology as for $A(\Gamma)$ to determine the
irreducible representations that are realized in $A(\Gamma)^!$. See
Section \ref{S:subalg} for the definition of the dual.  However, we
will see that $Tr_\sigma(A(\Gamma)^{!},t)$ has negative coefficients
and so is not a generating function of a graded dimension, unlike in
the case of gr$A(\Gamma)$.

\subsection{Representations of Aut$(A(\Gamma_{D_n}))$ acting
on $A(\Gamma_{D_n})^!$}

\begin{prop} The set $\{u^*,\,v_{ii+1}^*,\,w_i^*$ $1\leq i \leq n$, *, $u^*v_{ii+1}^*$ $1\leq i\leq n-1$, $v_{ii+1}^*w_i^*$ $1\leq i\leq n$, and
$u^*v_{12}^*w_1^*\}$ is a basis for the graded dual algebra $A(\Gamma_{D_n})^{!}$.
\end{prop}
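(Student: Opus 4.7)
The plan is to apply the quadratic presentation of $A(\Gamma)^!$ recorded earlier (generators $\{e(v,1)^*\}$; relations $e(v,1)^*e(u,1)^* = 0$ when $v \not\gtrdot u$, and $e(v,1)^* \sum_{v \gtrdot u} e(u,1)^* = 0$) to $\Gamma_{D_n}$. The cover pairs here are $u \gtrdot v_{ii+1}$ and $v_{ii+1} \gtrdot w_i, w_{i+1}$ (indices mod $n$), so the first family of relations kills every quadratic monomial in the generators except those of the three types $u^*v_{ii+1}^*$, $v_{ii+1}^*w_i^*$, and $v_{ii+1}^*w_{i+1}^*$, and the second family contributes the two balancing identities $u^*(v_{12}^* + v_{23}^* + \cdots + v_{n1}^*) = 0$ and $v_{ii+1}^*(w_i^* + w_{i+1}^*) = 0$.

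Next I will verify that the listed set spans $A(\Gamma_{D_n})^!$. Degrees $0$ and $1$ are immediate. In degree $2$, the two balancing identities express $u^*v_{n1}^*$ in terms of the $u^*v_{ii+1}^*$ with $i<n$ and each $v_{ii+1}^*w_{i+1}^*$ in terms of $v_{ii+1}^*w_i^*$, giving the claimed $(n-1)+n = 2n-1$ spanning elements. In degree $3$, a nonzero triple product $xyz$ requires both $xy$ and $yz$ to lie in the degree-$2$ list, which forces $x=u^*$, $y=v_{ii+1}^*$, and $z\in\{w_i^*, w_{i+1}^*\}$; applying $v_{ii+1}^*(w_i^*+w_{i+1}^*)=0$ reduces to $u^*v_{ii+1}^*w_i^*$. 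Multiplying $u^*\sum_j v_{jj+1}^*=0$ on the right by $w_i^*$ leaves $u^*v_{ii+1}^*w_i^* + u^*v_{i-1,i}^*w_i^* = 0$, and combining with $v_{i-1,i}^*(w_{i-1}^* + w_i^*) = 0$ yields $u^*v_{ii+1}^*w_i^* = u^*v_{i-1,i}^*w_{i-1}^*$; iterating cyclically shows all these triples equal $u^*v_{12}^*w_1^*$. For degree $\geq 4$, any nonzero $abcd$ requires three consecutive nonzero pairs; since the only middle letter $b$ compatible with both $ab\neq 0$ and $bc\neq 0$ is some $v_{ii+1}$, the letter $c$ must be a $w$-generator, and no generator can left-multiply a $w$, so $A^!_{[k]}=0$ for $k\geq 4$.

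For linear independence the delicate points are $u^*v_{12}^*w_1^*\neq 0$ and $\dim A^!_{[2]}=2n-1$. I will use the identification $A(\Gamma)^!_{[k]} \cong (R^{(k)})^*$, where $R \subseteq V_+^{\otimes 2}$ is the quadratic graded relation space of $A(\Gamma_{D_n})$ (spanned by the $u\otimes(v_{ii+1}-v_{i+1,i+2})$ and the $v_{ii+1}\otimes(w_i-w_{i+1})$) and $R^{(k)}:=\bigcap_{j=0}^{k-2} V_+^{\otimes j}\otimes R\otimes V_+^{\otimes k-j-2}$. The count $\dim R=2n-1$ immediately gives $\dim A^!_{[2]}=2n-1$, matching the spanning set. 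For degree $3$, I will enumerate the coordinates of a generic $\omega\in R\otimes V_+\cap V_+\otimes R$ that can be nonzero: the same case analysis as above restricts these to the cyclic pairs $\alpha_i, \beta_i$ attached to $u\otimes v_{ii+1}\otimes w_i$ and $u\otimes v_{ii+1}\otimes w_{i+1}$. The constraints $\alpha_i+\beta_i=0$ (from the $V_+\otimes R$ side) and $\beta_{i-1}+\alpha_i=0$ (from the $R\otimes V_+$ side) give a cyclic linear system with a one-dimensional solution space spanned by $\omega=\sum_i u\otimes v_{ii+1}\otimes(w_{i+1}-w_i)$. The pairing $\langle u^*\otimes v_{12}^*\otimes w_1^*,\omega\rangle=-1\neq 0$ certifies that $u^*v_{12}^*w_1^*$ is nonzero in $A^!_{[3]}$.

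The main obstacle is the cyclic linear system defining $R\otimes V_+\cap V_+\otimes R$ in degree $3$: one has to carefully track wrap-around indices to get the count of constraints right and verify they reduce to a single free parameter, as the two families of relations interact nontrivially around the cycle. Once this is done, the rest of the argument reduces to routine bookkeeping against the explicit presentation.
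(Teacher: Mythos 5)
Your proposal is correct, and it is in fact considerably more complete than the paper's own argument. The paper's proof consists of listing the generators of $A(\Gamma_{D_n})^!$ and the relations dual to $v_{ii+1}(w_i-w_{i+1})$ and $u(v_{ii+1}-v_{i+1\,i+2})$, and then asserting ``the elements in the graded dual follow''; it carries out essentially only your spanning step, and leaves linear independence (in particular that $\dim A^!_{[2]}=2n-1$ exactly and that $u^*v_{12}^*w_1^*\neq 0$) entirely implicit. You start from the same quadratic presentation, and your degree-by-degree reduction in degrees $2$ and $3$ (using $u^*\sum_j v_{jj+1}^*=0$ and $v_{ii+1}^*(w_i^*+w_{i+1}^*)=0$ to collapse everything onto the stated monomials, and the support analysis to kill degree $\geq 4$) matches what the paper tacitly relies on. What you add is the independence verification via the standard quadratic-dual identification $A^!_{[k]}\cong\bigl(\bigcap_j V_+^{\otimes j}\otimes R\otimes V_+^{\otimes k-j-2}\bigr)^*$: the count $\dim R=2n-1$ in degree $2$, and the cyclic system $\alpha_i+\beta_i=0$, $\beta_{i-1}+\alpha_i=0$ whose one-dimensional solution $\sum_i u\otimes v_{ii+1}\otimes(w_{i+1}-w_i)$ certifies $A^!_{[3]}\neq 0$. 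This is exactly the right way to close the gap the paper leaves open. One cosmetic point: in your degree $\geq 4$ argument the operative fact is that $w_i^*x^*=0$ for every generator $x^*$ (a $w$-vertex covers nothing in $V_+$), i.e.\ nothing can be right-multiplied onto a $w^*$; your phrase ``no generator can left-multiply a $w$'' says this backwards, but the surrounding case analysis makes the intended (and correct) claim clear.
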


\begin{proof}The generators of $A(\Gamma_{D_n})^!$ are
$u^*,\,v_{ii+1}^*,\,w_i^*$ $1\leq i \leq n$.  In the associated
graded algebra the relations are $v_{ii+1}(w_i-w_{i+1})$ and
$u(v_{ii+1}-v_{i+1i+2})$.  Thus, the relations in the dual are
$u^{*2}, u^*w_i^*, v_{ii+1}^*u^*$, $w_i^*u^*,w_i^*w_j^*,
v_{ii+1}^*v_{jj+1}^*$, $w_i^*v_{jj+1}^*,
u^*(v_{12}^*+\cdots+v_{n1}^*), v_{ii+1}^*w_j^*$ if $j\neq i, i+1$,
and $v_{ii+1}^*(w_i^*+w_{i+1}^*)$. The elements in the graded dual
follow.
\end{proof}

\smallskip
Now let us determine the trace on the graded pieces by seeing how
each conjugacy class acts on the elements in the dual. As $A(\Gamma_{D_n})^! = A(\Gamma_{D_n})^!_{[0]}\oplus A(\Gamma_{D_n})^!_{[1]}\oplus
A(\Gamma_{D_n})^!_{[2]}\oplus A(\Gamma_{D_n})^!_{[3]}$, there are only three degrees in the dual; so, we can calculate each independently.

\noindent Case 1: n=2m is even

\smallskip

The traces on the graded pieces are:

\begin{center}
\begin{tabular}{l|cccccccc}
&1&r&...&$r^j$&...&$r^m$&s&rs\\
\hline $Tr_{\sigma,1}$&2n+1&1&...&1&...&1&3&3\\
$Tr_{\sigma,2}$&2n-1&-1&...&-1&...&-1&-1&-1\\
$Tr_{\sigma,3}$&1&1&...&1&...&1&-1&-1\\
\end{tabular}
\end{center}

\medskip

\noindent Now that we have the graded traces we can find the
multiplicities of the representations by solving the system of
equations: $\sum_{\phi} m_{\phi,i}*\chi_{\sigma
\phi}(x)=Tr_{\sigma,i}(x)$, $x\in D_n$. They are:

\begin{center}
\begin{tabular}{l|ccccc}
&$\chi_{triv}$&$\chi_{1-1}$&$\chi_{-11}$&$\chi_{-1-1}$&$\chi_k$\\
\hline $m_{\phi,1}$&3&0&1&1&2\\
$m_{\phi,2}$&0&1&1&1&2\\
$m_{\phi,3}$&0&-1&0&0&0\\
\end{tabular}
\end{center}

\bigskip

\noindent Case 2: n=2m+1 is odd

\smallskip

\noindent The traces on the graded pieces are:

\begin{center}
\begin{tabular}{l|ccccccc}
&1&r&...&$r^j$&...&$r^m$&s\\
\hline $Tr_{\sigma,1}$&2n+1&1&...&1&...&1&3\\
$Tr_{\sigma,2}$&2n-1&-1&...&-1&...&-1&-1\\
$Tr_{\sigma,3}$&1&1&...&1&...&1&-1\\
\end{tabular}
\end{center}

\smallskip

\noindent The multiplicities are given below:

\begin{center}
\begin{tabular}{l|ccc}
&$\chi_{triv}$&$\chi_{1-1}$&$\chi_k$\\
\hline $m_{\phi,1}$&3&0&2\\
$m_{\phi,2}$&0&1&2\\
$m_{\phi,3}$&0&-1&0\\
\end{tabular}
\end{center}

\smallskip

\noindent Notice that the graded traces and multiplicities are the
same in both the even and odd cases.

\medskip

\noindent These values give graded trace functions (in both even and
odd cases) of:

$Tr_{(1)}(A(\Gamma_{D_n})^!,t)=1+(2n+1)t+(2n-1)t^2+t^3$

$Tr_{r^i}(A(\Gamma_{D_n})^!,t)=1+t-t^2+t^3$

$Tr_s(A(\Gamma_{D_n})^!,t)= Tr_{rs}(A(\Gamma_{D_n})^!,t)=
1+3t-t^2-t^3$

\subsection{Representations of $S_n$ acting on $Q_n^!$}

[\cite{GGRSW},\S 6] determines a basis for $Q_n^!$ as follows:

\noindent Let $A \subseteq \{1,...,n\}$, $B$ be the sequence
$(b_1,...,b_k)$, and $B'=\{b_1,...,b_k\}$.  Define
$S(A:B) = s(A)s(A\backslash b_1)\cdots s(A\backslash
b_1\backslash...\backslash b_k)$ where $s(A)$ is the image in
$Q_n^!$ of the generator dual to $e(A,1)\in Q_n$. Then
$\mathcal{S}=\{S(A:B) | \text{min}A \notin B \text{ and }b_1>\cdots
>b_k\} \cup \{\emptyset\}$ is a basis for $Q_n^!$. The relations in
the associated graded dual are:

1) $s(A)\sum_{a\in A} s(A\backslash a)=0, |A|\geq2$

2) $s(A)s(A\backslash i)s(A\backslash i\backslash j) =
-s(A)s(A\backslash j)s(A\backslash i\backslash j)$

3) $s(A)s(B)=0$ if $B\nsubseteq A$ or $|B| \neq |A|-1$.

As opposed to the case of $Q_n$, $\sigma$ does not permute the basis
elements of $Q_n^!$.  Thus, it is not enough to count fixed basis
elements to determine the trace.  For each $S(A:B) \in \mathcal{S}$,
we must write $\sigma S(A:B)$ as a linear combination of elements
$\mathcal{S}$. Write this as $\sigma S(A:B) = S(\sigma A:\sigma B) =
\displaystyle \sum_{S(C:D)\in \mathcal{S}}a_{\sigma A\sigma
BCD}S(C:D)$. Then Tr$_\sigma = \displaystyle
\sum_{S(A:B)\in\mathcal{S}} a_{\sigma A\sigma BAB}$. We are going to
get three possible values for a basis element's contribution to the
trace: $-1,0, \text{ or }1$.

If $B$ is $\sigma$-invariant, then let $l_B(\sigma)$ be the number
of pairs $i, \, j$ with $i<j$ and $\sigma b_i <\sigma b_j$.  This is
the length of $\sigma$ restricted to $B$.  If there exists $c\in B$
such that $\sigma(c)=min A$, then define $\sigma':=(c \, min
A)\sigma$.

\begin{prop}\label{P:qndualtr}\[a_{\sigma A\sigma BAB} = \begin{cases}
(-1)^{l_B(\sigma)}& \text{if } \sigma A=A, minA \notin \sigma B', \, \sigma B'=B'\\
(-1)^{l_B(\sigma')+1}& \mbox{if $\sigma A=A, minA\in \sigma B'$ and
for some $b\in B'$, $\sigma(B'\backslash b)=B'\backslash
\text{min}A$}\\0& \mbox{otherwise} \end{cases}
\]  \end{prop}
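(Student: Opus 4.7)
The plan is to rewrite $\sigma S(A:B) = S(\sigma A:\sigma B)$ in normal form using relations (1)--(3) and read off the coefficient of $S(A:B)$ itself. First, none of these relations alters the leading factor $s(X)$ of a monomial, so if $\sigma A\neq A$ no basis element starting with $s(A)$ appears in the expansion, forcing the coefficient to be $0$; this is subsumed by the ``otherwise'' case. So assume $\sigma A=A$, giving $\sigma S(A:B)= s(A)s(A\setminus\sigma b_1)\cdots s(A\setminus\{\sigma b_1,\ldots,\sigma b_k\})$.

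I would use relation (2) repeatedly to sort the removed indices $(\sigma b_1,\ldots,\sigma b_k)$ into decreasing order $d_1>\cdots>d_k$ (where $\{d_1,\ldots,d_k\}=\sigma B'$); each adjacent transposition contributes $-1$, and the total number of transpositions equals the inversion count $l_B(\sigma)$, so this step contributes $(-1)^{l_B(\sigma)}$. If $\min A\notin\sigma B'$ the expression is already a basis element, namely $(-1)^{l_B(\sigma)}S(A:\sigma B')$, which equals $\pm S(A:B)$ iff $\sigma B'=B'$; this yields the first case of the formula, while $\sigma B'\neq B'$ contributes $0$.

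If instead $\min A\in\sigma B'$, then $d_k=\min A$ since every element of $\sigma B'\subseteq A$ is $\geq\min A$; so the last two factors read $s(C)s(C\setminus\min A)$ with $C=A\setminus\{d_1,\ldots,d_{k-1}\}$ and $\min C=\min A$. Apply relation (1) to rewrite this as $-\sum_{a\in C,\,a\neq\min A} s(C)s(C\setminus a)$. The underlying index set of the term indexed by $a$ is $\{d_1,\ldots,d_{k-1},a\}$, and for this to equal $B'$ (so that the term eventually contributes to the coefficient of $S(A:B)$) one needs $\sigma B'\setminus\{\min A\}\subseteq B'$, equivalently $\sigma B'=(B'\setminus\{b\})\cup\{\min A\}$ for a unique $b\in B'$, in which case $a=b$. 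When no such $b$ exists, no term contributes, giving the ``otherwise'' value $0$. When $b$ does exist, sorting $(d_1,\ldots,d_{k-1},b)$ into decreasing order via relation (2) introduces an additional $(-1)^{m(b)}$ where $m(b)=|\{i\leq k-1:d_i<b\}|$; combined with the initial $(-1)^{l_B(\sigma)}$ and the $-1$ from relation (1), the total coefficient becomes $(-1)^{l_B(\sigma)+m(b)+1}$.

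The remaining task is the parity identity $l_B(\sigma)+m(b)\equiv l_B(\sigma')\pmod 2$, where $\sigma'=(c\,\min A)\sigma$ with $c=\sigma^{-1}(\min A)\in B'$. I would verify this by direct inversion counting, splitting into the sub-cases $c=b$ (so $c\notin\sigma B'$ and $\sigma'$ simply fixes $c$ on $B'$) and $c\neq b$ (so there is a unique $b_{j_0}\in B'$ with $\sigma b_{j_0}=c$, and $\sigma'$ swaps the $\sigma$-images of $c$ and $b_{j_0}$). In each sub-case the change in inversions between $\sigma$ and $\sigma'$ involves only pairs in $B$ that include $b_{i_0}=c$ (and, in the second sub-case, also $b_{j_0}$), and modulo $2$ this change equals $m(b)$. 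The main obstacle is this final parity computation, particularly the sub-case $c\neq b$ in which two indices of $B$ are affected simultaneously and the signs arising from pairs $(i,j_0)$ and $(j_0,j)$ must be tallied carefully; everything else is a mechanical application of the three defining relations.
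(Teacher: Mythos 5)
Your proposal is correct and follows essentially the same route as the paper: both arguments observe that no relation alters the leading factor $s(A)$, that relation (1) can change at most one removed index (necessarily replacing $\min A$), and then track the signs coming from relations (1) and (2) to arrive at the two nonzero cases. Your explicit $(-1)^{m(b)}$ re-sorting factor and the deferred parity identity $l_B(\sigma)+m(b)\equiv l_B(\sigma')\pmod 2$ do check out (with $\sigma'$ read as the permutation of $B'$ sending $\sigma^{-1}(\min A)\mapsto b$ and agreeing with $\sigma$ elsewhere, which is what makes $B'$ invariant and $l_B(\sigma')$ meaningful), so if anything you are being more careful than the paper's own terse treatment of this final sign bookkeeping.
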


\begin{proof}
If $\sigma \in S_n$ and $\sigma B'=B'$, then by relation (2) above
we have that \begin{equation}\label{Eq:rel2} S(A:\sigma B) =
(-1)^{l_B(\sigma)}S(A:B).\end{equation}  If min $A\in B'$, then by
relation (1) \begin{equation}\label{Eq:rel1} S(A:B) = \displaystyle
-\sum_{c\in A\backslash B'} S(A:(b_1,...,b_{k-1},c)).\end{equation}

Let us break this down into parts.

\noindent (*) $a_{ABCD} = 0$ if $C\neq A$.  This is true because no
relation changes the first factor, $s(C)$, of $S(C:D)$.

\noindent (**) $a_{ABCD} = 0$ unless $B'=D'$ or $B'=(B'\cap D')\cup
\{min A\}$.  Only relation (1) can change which elements are
removed, and that relation can only change one element.

Recall $\sigma S(A:B) = S(\sigma A: \sigma B) = \displaystyle
\sum_{S(C:D)\in \mathcal{S}} a_{\sigma A\sigma B C D} S(C:D)$.  We
need to know the value of $a_{\sigma A\sigma B AB}$. By (*) and
(**), this is 0 unless $\sigma A = A$ and $\sigma B'=B'$ or $\sigma
B' = (B'\cap \sigma B')\cup \{min A\}$. If $\sigma A = A$ and
$\sigma B'=B'$, then, by Equation \ref{Eq:rel2}, $a_{\sigma A\sigma
B AB} = (-1)^{l_B(\sigma)}$.  If $\sigma A = A$ and $\sigma B' =
(B'\cap \sigma B')\cup \{min A\}$, write $\sigma(b_j)=min A$.  Then
$(b_j\, minA)\sigma B' =\sigma'B'= B'$.  Thus, by Equation
\ref{Eq:rel1}, $\sigma S(A:B) = \displaystyle
-S(A:(\sigma(b_1),...,\hat{b_j},...,\sigma(b_{k}),b_j))+ \text{other
terms}$.  And, again by Equation \ref{Eq:rel2}, $\sigma S(A:B) =
(-1)^{l_B(\sigma')}(-S(A:B))$.  Hence, $a_{\sigma A\sigma B AB} =
(-1)^{l_B(\sigma)+1}$.
\end{proof}

Now that we know what each basis element contributes to the trace,
we want to find $Tr_\sigma(Q_n^!,t)$.

Let us introduce some notation.  For $\sigma \in S_n$, write
$\sigma=\sigma_1\cdots \sigma_m$, a product of disjoint cycles.
Denote the orbits of $\sigma$ by $\{\mathcal{O}_1,
\mathcal{O}_2,...,\mathcal{O}_m\}$ and put an ordering on the orbits
given by $\mathcal{O}_i<\mathcal{O}_j$ if the minimal element of
$\mathcal{O}_i$ is less than that of $\mathcal{O}_j$.  Say
$\mathcal{O}_1< \mathcal{O}_2< \cdots < \mathcal{O}_m$. Let $i_j$ be
the size of $\mathcal{O}_j$ (equal to the length of $\sigma_j$).

\begin{thm}\label{T:qndualtr}\[ Tr_\sigma(Q_n^!,t) =
\frac{1+t\displaystyle \prod_{k=1}^m (2-(-t)^{i_k})}{1+t} \]
\end{thm}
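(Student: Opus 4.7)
The plan is to apply Proposition \ref{P:qndualtr} to express $Tr_\sigma(Q_n^!,t) = 1 + \sum_{S(A:B)\in\mathcal{S}} a_{\sigma A\sigma BAB}\,t^{|B|+1}$, then reorganize the sum by the orbit structure of $\sigma$ and telescope. Since a nonzero contribution requires $\sigma A=A$, the set $A$ must be a (nonempty) union of orbits, say $A=\bigsqcup_{k\in J_A}\mathcal{O}_k$. Setting $k^*=\min J_A$, we have $\min A\in\mathcal{O}_{k^*}$, and I will write $\mathcal{O}_{k^*}=\{c_0,c_1,\dots,c_{i_{k^*}-1}\}$ with $c_0=\min A$ and $c_{j+1}=\sigma(c_j)$.

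In Case~1 ($\sigma B'=B'$, $\min A\notin B'$), $B'=\bigsqcup_{k\in J_B}\mathcal{O}_k$ with $J_B\subseteq J_A\setminus\{k^*\}$, and the sign $(-1)^{l_B(\sigma)}$ factorizes over orbits as $\prod_{k\in J_B}(-1)^{i_k-1}$, i.e.\ each orbit in $B'$ contributes $-(-t)^{i_k}$ to the weight. Summing over all $A\supseteq B'$ with $\min J_A=k^*$, each orbit $\mathcal{O}_k$ with $k>k^*$ has three independent states (absent from $A$; in $A$ but not $B'$; in both), giving the factor $2-(-t)^{i_k}$. Hence
\[
F_1 \;=\; t\sum_{k^*=1}^{m}\prod_{k>k^*}\bigl(2-(-t)^{i_k}\bigr).
\]
In Case~2, $B'$ meets $\mathcal{O}_{k^*}$ in a nontrivial ``tail interval'' $\{c_l,\dots,c_{i_{k^*}-1}\}$ for some $l\in\{1,\dots,i_{k^*}-1\}$, and otherwise contains full orbits from $J_A\setminus\{k^*\}$. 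Tracking through the reduction of $S(\sigma A:\sigma B)$ to the basis (applying relation (1) once for each $c_j$ with $j<l$, then (2) to sort), the $\mathcal{O}_{k^*}$-portion contributes sign $(-1)^{i_{k^*}-l}$ and the remaining orbits contribute exactly as in Case~1, yielding
\[
F_2 \;=\; t\sum_{k^*=1}^{m}\Pi_{k^*+1}\sum_{s=1}^{i_{k^*}-1}(-t)^s,\qquad \Pi_k:=\prod_{j\geq k}\bigl(2-(-t)^{i_j}\bigr).
\]

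Adding $1+F_1+F_2 = 1 + t\sum_{k^*=1}^m \Pi_{k^*+1}\bigl(1+\sum_{s=1}^{i_{k^*}-1}(-t)^s\bigr)$ and using the geometric identity $\sum_{s=0}^{i-1}(-t)^s=(1-(-t)^i)/(1+t)$, this becomes
\[
Tr_\sigma(Q_n^!,t) \;=\; 1 + \frac{t}{1+t}\sum_{k^*=1}^m\bigl(1-(-t)^{i_{k^*}}\bigr)\Pi_{k^*+1}.
\]
Since $(1-(-t)^{i_{k^*}})\Pi_{k^*+1}=\Pi_{k^*}-\Pi_{k^*+1}$, the sum telescopes to $\Pi_1-\Pi_{m+1}=\Pi_1-1$, giving $Tr_\sigma=1+t(\Pi_1-1)/(1+t)=(1+t\Pi_1)/(1+t)$, which is the claimed formula.

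The main obstacle is the Case~2 bookkeeping: identifying \emph{all} valid tail intervals in $\mathcal{O}_{k^*}$ and checking that the sign after iterated use of relation~(1) really is $(-1)^{i_{k^*}-l}$ (Proposition \ref{P:qndualtr} as stated handles only the $l=i_{k^*}-1$ case, where a single application of (1) suffices; the other intervals require iterating the argument, and one must verify no cross-terms survive by relation~(3)). Once the Case~2 signs are pinned down, everything else---the factorization over orbits, the geometric series, and the telescoping---is routine.
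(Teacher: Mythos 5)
Your proof is correct, and it rests on the same foundation as the paper's: Proposition \ref{P:qndualtr}, the observation that a nonzero contribution forces $A$ to be a union of orbits and $B'$ to be either a union of full orbits avoiding $\mathcal{O}_{k^*}$ (your Case~1) or such a union together with a nonempty ``tail'' $\{\sigma^{-1}(\min A),\dots,\sigma^{-s}(\min A)\}$ of $\mathcal{O}_{k^*}$ (your Case~2), and the per-element weights $-(-t)^{i_k}$ and $(-t)^s$. Where you genuinely depart from the paper is in the final summation: the paper fixes $B'$ first, counts the admissible $A$'s as differences of powers of $2$, multiplies through by $1+t$ early, and then verifies three separate cancellations by hand; you instead group by $k^*=\min J_A$, exploit the independence of the states of the orbits $\mathcal{O}_k$, $k>k^*$, to get the product $\Pi_{k^*+1}=\prod_{k>k^*}(2-(-t)^{i_k})$ directly, and finish with the geometric-series identity and the telescoping $(1-(-t)^{i_{k^*}})\Pi_{k^*+1}=\Pi_{k^*}-\Pi_{k^*+1}$. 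This buys a visibly shorter and less error-prone endgame, and it makes transparent \emph{why} the answer is a product over orbits, which the paper's cancellation argument obscures. One reassurance on the obstacle you flag: no iteration of relation~(1) is ever needed. For any tail of size $s$ one first uses relation~(2) to move $\min A$ to the last slot of $\sigma B$, applies relation~(1) exactly once to the final pair of factors (the cross-terms land on basis elements with underlying set different from $B'$, so they cannot feed back into the coefficient of $S(A:B)$), and then sorts again by~(2); the resulting sign is $-\operatorname{sgn}(\sigma'|_{B'})$ with $\sigma'=(\sigma^{-1}(\min A)\ \min A)\sigma$, whose restriction to the tail is an $s$-cycle of sign $(-1)^{s-1}$, giving exactly your $(-1)^{s}=(-1)^{i_{k^*}-l}$ together with the Case~1 signs on the full orbits. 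So Proposition \ref{P:qndualtr} as stated already covers every tail interval, and your argument is complete.
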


\begin{proof}
In order to prove the formula, we must take the sum over all
$S(A:B)\in \mathcal{S}$ of $a_{\sigma A\sigma BAB}$, each of their
contribution to the trace.  We will do this in cases based on the
value of the basis element's contribution to the trace.

Case 1: $\sigma A=A$, min$A \notin \sigma B', \, \sigma B'=B'$

Consider $B=\mathcal{O}_{r_1} \cup ... \cup \mathcal{O}_{r_l}$,
where $r_1<\cdots <r_l$.  Because $B'\subseteq A$ and min$A\notin
\sigma B'$, $A $ must contain all of
$\mathcal{O}_{r_1},...,\mathcal{O}_{r_l}$ and at least one
$\mathcal{O}_{r_0}$ such that $r_0<r_1$ (so, $r_1\neq 1$).  Thus we
must choose a nonempty subset of
$\{\mathcal{O}_1,...,\mathcal{O}_{r_1-1}\}$ and a subset $ A'$ of
$\{\mathcal{O}_{r_1+1},...,\mathcal{O}_m\}$ such that
$A'\cap\{\mathcal{O}_{r_1+1},...,\mathcal{O}_m\} = \emptyset$. This
gives $2^{m-r_1-(l-1)}(2^{r_1-1}-1) = 2^{m-l}-2^{m-r_1-l+1}$ choices
for A. Because $l_B(\sigma_{r_i})=i_{r_i}-1$, $l_B(\sigma) =
\sum_{1\leq i\leq l} l_B(\sigma_{r_i}) = i_{r_1}+ \cdots
+i_{r_l}-l$.  Also, the degree of $S(A:B)$ is $i_{r_1}+ \cdots
+i_{r_l}+1$.  Thus the contribution toward $Tr_\sigma$ for all such
$S(A:B)$ given $B$ is $d(B) =
(-1)^{i_{r_1}+\cdots+i_{r_l}-l}[2^{m-l}-2^{m-r_1-l+1}]t^{i_{r_1}+\cdots+i_{r_l}+1}.$
Summing over all $B$ in this case, we obtain \[\sum_Bd(B) =
\sum_{\substack{2\leq r_1<\cdots<r_l\leq m\\1\leq l\leq m-1}}
(-1)^{i_{r_1}+\cdots+i_{r_l}-l}[2^{m-l}-2^{m-r_1-l+1}]
t^{i_{r_1}+\cdots+i_{r_l}+1}.\] We will denote this by $c_1$ for ease
of referencing later.

\noindent Case 2: $\sigma A=A$, min$A\in \sigma B'$ and for some
$b\in B'$, $\sigma(B'\backslash b)=B'\backslash \text{min}A$

Fix $B$, say
$B'\subset\mathcal{O}_{r_1}\cup...\cup\mathcal{O}_{r_l}$.  $B'$ must
contain all elements in $\{\mathcal{O}_{r_2},...
,\mathcal{O}_{r_l}\}$ since $B'$ and $\sigma B'$ can only differ by
one element; and, that must occur in $\mathcal{O}_{r_1}$ because
min$A$ must be in $\mathcal{O}_{r_1}$ and cannot be in $B'$. Say
$\sigma_{r_1}=(c_{i_{r_1}-1}\cdots c_1 \, min A)$.  Then $B$ must also
contain consecutive elements $\{c_1,...,c_j\},\, 1\leq j\leq
i_{r_1}-1$, in $\mathcal{O}_{r_1}$.  If this were not the case, $B'$
and $\sigma B'$ would differ by more than one element ($c_j \notin
\sigma B$).

Consider $\sigma' = (c_j \, min A)(min
A\,c_{i_{r_1}-1}\cdots c_1)\sigma_{r_2}\cdots\sigma_{r_1} = \\(c_j \, min
A)(min A\, c_1)\cdots (min
A\,c_{i_{r_1}-1})\sigma_{r_2}\cdots \sigma_{r_1}$.  Then $l_B(\sigma') =
\displaystyle \sum_{k=2}^l(i_{r_k}-1)+j+1$.  Thus, by Proposition
\ref{P:qndualtr}, the trace of $\sigma$ acting on $S(A:B)$ is
$(-1)^{j+i_{r_2}+\cdots+i_{r_l}-(l-1)+2} =
(-1)^{j+i_{r_2}+\cdots+i_{r_l}-(l-1)}$.

Given $B$, $A$ must contain
$\{\mathcal{O}_{r_1},...,\mathcal{O}_{r_l}\}$ and may contain
other orbits greater than $\mathcal{O}_{r_1}$.  Thus, there are $2^{m-r_1-(l-1)}$
choices for $A$.  Now there are $i_{r_1}-1$ subsets
$B\subset\mathcal{O}_{r_1}\cup...\cup\mathcal{O}_{r_l}$.  Putting this all together, given
$\{\mathcal{O}_{r_1},...,\mathcal{O}_{r_l}\}$, $S(A:B)$ contributes
a total of \\$2^{m-r_1-(l-1)} \displaystyle \sum_{j=1}^{i_{r_1}-1}
(-1)^{j+i_{r_2}+\cdots+i_{r_l}-(l-1)}t^{j+i_{r_2}+\cdots+i_{r_l}+1}$
towards the graded trace function.

We will need to sum this over all
$\{\mathcal{O}_{r_1},...,\mathcal{O}_{r_l}\}$ and multiply by $1+t$.
This gives us \begin{eqnarray*}&\displaystyle\sum_{\substack{1\leq
r_1<\cdots<r_l\leq m\\1\leq l\leq m}} 2^{m-r_1-l+1}
(-1)^{i_{r_2}+\cdots+i_{r_l}-l}\,t^{i_{r_2}+\cdots +i_{r_l}+2}\\&+
\displaystyle\sum_{\substack{1\leq r_1<\cdots<r_l\leq m\\1\leq l\leq
m}} 2^{m-r_1-l+1}
(-1)^{i_{r_1}+\cdots+i_{r_l}-l}\,t^{i_{r_1}+\cdots+i_{r_l}+1}\end{eqnarray*}
(notice that the sum over $j$ is telescoping.)  Let us label the
first sum by $c_2$ and the second by $c_3$ for ease of referencing
later.

\noindent Case 3: $B={\emptyset}$.

Because $\sigma A=A$, $a_{\sigma A\sigma BAB}=1$.  Thus we have a
contribution of $1+(2^m-1)t$ towards the graded trace. Multiplying
by $1+t$ gives us $1 + 2^mt + (2^m-1)t^2$.

If we sum over all possibilities for the traces and multiply by
$1+t$, we have that $Tr_\sigma(Q_n^!,t) = 1 + 2^mt + (2^m-1)t^2 +
c_1 +c_1t + c_2 + c_3.$

\noindent Consider the following pieces of the expression.

\noindent First sum $(2^m-1)t^2$ and the $l=1$ terms of $c_2$.

$(2^m-1)t^2+c_2|_{l=1} =(2^m-1)t^2 + \displaystyle \sum_{1\leq
r_1\leq m} 2^{m-r_1}(-1)^{-1}t^2$

\qquad \qquad \qquad $= t^2[(2^m-1) - \displaystyle \sum_{r_1=1}^m
2^{m-r_1}] = t^2[(2^m-1)-(2^{m-1+1}-1)] = 0.$

\noindent Next sum the remaining terms of $c_2$ (with $l>1$) and
$tc_1$ where we do a change of variables setting $r_1$ to $r_2$.

$c_1t|_{r_1\mapsto r_2}$ + $c_2|_{l>1} =$
\begin{align*}&(2^{m-l+1}-2^{m-r_2-l+1+1})
(-1)^{i_{r_2}+\cdots+i_{r_l}-(l-1)}t^{i_{r_2}+\cdots+i_{r_l}+2}\\& +
\sum_{r_1=1}^{r_2-1}2^{m-r_1-l+1} (-1)^{i_{r_2}+\cdots+i_{r_l}-l}
t^{i_{r_2}+\cdots+i_{r_l}+2}\\&= (-1)^{i_{r_2}+\cdots+i_{r_l}-l+1}\,
t^{i_{r_2}+\cdots+i_{r_l}+2} [2^{m-l+1}- 2^{m-r_2-l+2}
-\sum_{r_1=0}^{r_2-2}2^{m-r_1-l}]\\&
=(-1)^{i_{r_2}+\cdots+i_{r_l}-l+1}\, t^{i_{r_2}+\cdots+i_{r_l}+2}
[2^{m-l+1}- 2^{m-r_2-l+2}-[\sum_{r_1=0}^{m-l}2^{r_1} -
\sum_{r_1=0}^{m-r_2-l+1}2^{r_1}]]\\& =
(-1)^{i_{r_2}+\cdots+i_{r_l}-l+1}\, t^{i_{r_2}+\cdots+i_{r_l}+2}
[2^{m-l+1}- 2^{m-r_2-l+2} -(2^{m-l+1}-1) + (2^{m-(r_2-1)-l+1}-1)]\\&
= 0.\end{align*}

\noindent Finally sum $c_1$ and the terms of $c_3$ with $r_1\neq 1$.

$c_1+c_3|_{r_1\neq 1} = \displaystyle \sum_{\substack{1\leq
r_1<\cdots<r_l\leq m\\1\leq l\leq m-1}}(-1)^{i_{r_1}+\cdots+i_{r_l}-l}
t^{i_{r_1}+\cdots+i_{r_l}+1}[2^{m-l} - 2^{m-r_1-l+1} - 2^{m-r_1-l+1}]$

\qquad \qquad  $=\displaystyle \sum_{\substack{1\leq r_1<\cdots<r_l\leq
m\\1\leq l\leq m-1}}(-1)^{i_{r_1}+\cdots+i_{r_l}-l}\,
t^{i_{r_1}+\cdots+i_{r_l}+1}2^{m-l}$.

\noindent The terms of $c_3$ with $r_1=1$ are: $\displaystyle
\sum_{1\leq l\leq m}(-1)^{i_{1}+\cdots+i_{r_l}-l}\,
t^{i_{1}+\cdots+i_{r_l}+1}2^{m-l}.$

\noindent Putting it all together we obtain: \[1+t[2^m +
\sum_{\substack{1\leq r_1<\cdots<r_l\leq m\\1\leq l\leq m}}(-1)^l
2^{m-l} (-t)^{i_{r_1}+\cdots+i_{r_l}}] = 1 +
t\prod_{k=1}^{m}(2-(-t)^{i_k})\]

\noindent Therefore, \[ Tr_\sigma(Q_n^!,t) = \frac{1+t\displaystyle
\prod_{k=1}^m (2-(-t)^{i_k})}{1+t} \] as desired.
\end{proof}

\begin{eg}Here are the graded trace functions for $Q_4^!$:
\[Tr_{(1)}(Q_4^!,t) = \frac{1+t(2+t)^4}{1+t} =
1+15t+17t^2+7t^3+t^4\] \[Tr_{(12)}(Q_4^!,t) =
\frac{1+t(2-t^2)(2+t)^2}{1+t} = 1+7t+t^2-3t^3-t^4\]
\[Tr_{(123)}(Q_4^!,t) = \frac{1+t(2+t^3)(2+t)}{1+t} =
1+3t-t^2+t^3+t^4 \] \[Tr_{(12)(34)}(Q_4^!,t) =
\frac{1+t(2-t^2)^2}{1+t} = 1+3t-3t^2-t^3+t^4\]
\[Tr_{(1234)}(Q_4^!,t) = \frac{1+t(2-t^4)}{1+t} =
1+t-t^2+t^3-t^4\]\end{eg}

\medskip

Now, to get the representations we do the same as in the case of the
algebra. We have that $\vec{m}(t) = (S^T)^{-1}\vec{Tr}(t)$ and
Proposition \ref{P:frob} are still true if you replace
$Tr_\sigma(Q_n,t)$ with $Tr_\sigma(Q_n^!,t)$.

\begin{eg}Irreducible Representations of $S_4$ acting on $Q_4^!$:

There are only four degrees in the dual, so we can give all of the
multiplicities:
\smallskip

\begin{center}
\begin{tabular}{l|ccccc}
&$\chi_{triv}$&$\chi_{sgn}$&$\chi_{3}$&$\chi_{reg}$&$\chi_{sgn\otimes reg}$\\
\hline $m_{\phi,1}$&4&0&1&3&0\\
$m_{\phi,2}$&0&0&1&3&2\\
$m_{\phi,3}$&0&1&0&0&2\\
$m_{\phi,4}$&0&1&0&0&0
\end{tabular}
\end{center}
\end{eg}

\smallskip
Notice that all of the representations are realized in at least one
grading, but not in every.  Also, each representation occurs with a
much smaller multiplicity than in the algebra. 

\section{Koszulity property}\label{S:koszul}

An interesting property of quadratic algebras is Koszulity.  One of
many equivalent definitions of Koszulity is a lattice definition
\cite{F}.

\begin{definition}[Koszul Algebra]\cite{B} Let $A=(V,R)$ be a
quadratic algebra where $V$ is the span of the generators and $R$
the span of the generating relations in $V\otimes V$.  Then $A$ is
Koszul if the collection of subspaces $\{V^{\otimes i}\otimes
R\otimes V^{\otimes n-i-2}, \, 0\leq i\leq n-2\}$ generates a
distributive lattice in $V^{\otimes n}$ for any n.\end{definition}

One property of Koszul algebras is that the Hilbert series of the
algebra and its dual are related by $H(A,t)*H(A^!,-t)=1$. This
property, however, is not equivalent to Koszulity.  One can easily
check that the analogous property holds for the graded trace
functions that we found for $A(\Gamma)$ and its dual $A(\Gamma)^{!}$
in our two algebras. Namely, $Tr_\sigma (A(\Gamma),t)*Tr_\sigma
(A(\Gamma)^!,-t)$=1 where $\sigma$ is an element in the automorphism
group of the algebra.

\newpage
\nocite{GGRW,RSW3,Pr,RW}

\bibliography{paperbib}{}
\bibliographystyle{pgendstyle}

\end{document}